\documentclass[a4paper,11pt]{article}
\usepackage{amsmath,amssymb,amsthm}
\usepackage{cases}
\usepackage{a4wide}
\usepackage[pdftex]{graphicx}
\usepackage[hang,small,bf]{caption}
\usepackage[subrefformat=parens]{subcaption}
\usepackage{here}
\usepackage{comment}
\usepackage{ulem}
\usepackage[pdftex]{color}

\usepackage{graphicx} 
\numberwithin{equation}{section}

\newtheorem{prop}{Proposition}[section]
\newtheorem{thm}{Theorem}[section]

\newtheorem{lem}{Lemma}[section]
\newtheorem{rem}{Remark}[section]
\newtheorem{cor}{Corollary}[section]

\newtheorem{dfn}{Definition}[section]


\newcommand{\Figure}[1]{Figure~\ref{#1}}
\newcommand{\Theorem}[1]{Theorem~\ref{#1}}
\newcommand{\Proposition}[1]{Proposition~\ref{#1}}
\newcommand{\Remark}[1]{Remark~\ref{#1}}


\newcommand{\curve}[2]{\Gamma^{#1}_{#2}}

\newcommand{\closure}[1]{\overline{#1}}


\newcommand{\grad}[1]{\nabla #1}

\newcommand{\Lemma}[1]{Lemma~\ref{#1}}

\newcommand{\laplacian}[1]{\Delta{#1}}

\newcommand{\pOmega}[0]{\partial\Omega}

\newcommand{\weaksol}[2]{w^{#1}_{#2}}

\newcommand{\dH}[1]{d\mathcal{H}^{#1}}

\newcommand{\halfplane}[1]{\mathbb{R}^{#1}_+}

\newcommand{\subdiff}[2]{\partial #1 (#2)}

\newcommand{\argmin}{\operatorname{argmin}}

\newcommand{\dL}[1]{\,\mathrm{d}\mathcal{L}^{#1}}

\newcommand{\script}[2]{\mathcal{#1}_{#2}}
\newcommand{\geodis}[1]{d_{\Omega,#1}}





\newcommand{\zerovec}[0]{\bvzero}

\def\bv#1{\mbox{\boldmath{$#1$}}}    

\def\bvzero{{\bf 0}}


\newcommand{\innerproductSingle}[2]{\left<#1,#2\right>}
\newcommand{\chambolleEnergyFour}[1]{E^\beta_h(#1)}
\newcommand{\chambolleEnergyFourOriginal}[1]{E_h(#1)}
\newcommand{\projection}[2]{\pi_{#1}(#2)}
\newcommand{\projectionnoparam}[1]{\pi_{#1}}
\newcommand{\norm}[1]{\left\lVert#1\right\rVert}

\newcommand{\hess}[1]{\nabla^2 #1}

\newcommand{\relaxsup}[2]{\operatornamewithlimits{limsup^*}_{#1\to #2}}
\newcommand{\relaxinf}[2]{{\operatornamewithlimits{liminf}}_{*#1\to #2}}
\newcommand{\approxscheme}[2]{S_{#2}}
\newcommand{\chambolleSet}[1]{T_{#1}}
\newcommand{\chambolleFunction}[1]{S_{#1}}
\newcommand{\del}[2]{\partial_{#1}{#2}}
\newcommand{\normalConv}[0]{\nu}
\newcommand{\supersolution}[1]{\overline{#1}}
\newcommand{\subsolution}[1]{\underline{#1}}
\newcommand{\sublevel}[2]{E^{#1}_{#2}}
\newcommand{\solitonprofile}[2]{\Phi_{#1}(#2)}
\newcommand{\soliton}[1]{s_{#1}}
\newcommand{\anisotropy}[3]{\phi_{#1#2}(#3)}
\newcommand{\betahigh}[0]{\overline{\beta}}
\newcommand{\betalow}[0]{\underline{\beta}}
\newcommand{\chigh}[0]{\overline{C}}
\newcommand{\clow}[0]{\underline{C}}
\newcommand{\gradbdd}[2]{\nabla_{#1}#2}
\newcommand{\containerTangent}[0]{\tau_\Omega}
\newcommand{\surfacenormal}[2]{\frac{\grad{#1(#2)}}{|\grad{#1}(#2)|}}


\title{{A c}onvergence result for a minimizing movement scheme {for mean curvature flow} with prescribed contact angle in a curved domain}
\author{Tokuhiro Eto\thanks{Graduate School of Mathematical Sciences, The University of Tokyo, Komaba 3-8-1, Meguro, Tokyo 153-8914, Japan. E-mail:tokuhiro.eto@gmail.com} \and Yoshikazu Giga \thanks{Graduate School of Mathematical Sciences, The University of Tokyo, Komaba 3-8-1, Meguro, Tokyo 153-8914, Japan. E-mail:labgiga@ms.u-tokyo.ac.jp}}
\date{\today}

\begin{document}

\maketitle

\begin{abstract}
We consider a minimizing movement scheme of Chambolle type for the mean curvature flow equation with prescribed contact angle condition in a smooth bounded domain in $\mathbb{R}^d$ ($d\geq2$).
 We prove that an approximate solution constructed by the proposed scheme converges to the level-set mean curvature flow with prescribed contact angle provided that the domain is convex and that the contact angle is away from zero under some control of derivatives of given prescribed angle.
 We actually prove that an auxiliary function corresponding to the scheme uniformly converges to a unique viscosity solution to the level-set equation with an oblique {derivative} boundary condition corresponding to the prescribed boundary condition.
\end{abstract}

\section{Introduction}
Let $\Omega$ be a smooth bounded domain in $\mathbb{R}^d$ with $d\geq 2$.
We consider the mean curvature flow equation for an evolving family $\{\Gamma_t\}_{t\geq 0}$ of hypersurfaces in $\closure{\Omega}$
touching the boundary $\pOmega$ of $\Omega$ with prescribed angle. Namely,
we consider its initial value problem of the form:
\begin{equation}\label{eq:4_Target}
    \begin{cases}
        V = -\operatorname{div}_{\Gamma_t}\normalConv\ \ \mbox{on}\ \ \Gamma_t,\ \ t{>} 0,\\
        \angle(\normalConv,\normalConv_\Omega) = \theta\ \ \mbox{on}\ \ \pOmega,\\
        \Gamma_0 = \Gamma,
    \end{cases}
\end{equation}
where $\Gamma$ is a given hypersurface in $\closure{\Omega}$.
{
    Here, $\nu$ denotes a unit vector field of $\Gamma_t$, and $V$ denotes the normal velocity
    of each point on $\Gamma_t$ in the direction of $\nu$; $\operatorname{div}_{\Gamma_t}$ denotes the
    surface divergence of $\nu$ on $\Gamma_t$ so that $-\operatorname{div}_{\Gamma_t}{\nu}$ equals
    the ($(d-1)$-times) mean curvature of $\Gamma_t$ in the direction of $\nu$.
    Thus, the first equation of \eqref{eq:4_Target} is nothing but the mean curvature flow equation.
    The second equation of \eqref{eq:4_Target} is the boundary condition.
    The symbol $\angle(\nu,\nu_\Omega)$ denotes the angle between $\nu$ and $\nu_\Omega$,
    where $\nu_\Omega$ denotes the outward unit normal vector field of $\pOmega$.
    The function $\theta:\pOmega\to(0,\pi)$ is given and prescribes the contact angle $\angle(\nu,\nu_\Omega)$.
}
The authors \cite{EtoGiga2023} extended Chambolle's scheme \cite{Chambolle2004} to construct an approximate solution to
$\eqref{eq:4_Target}$. In the sequel, the proposed scheme will be referred as a capillary Chambolle type scheme.
We briefly review the proposed scheme together with some literature. 
Given an initial data $E_0\subset\mathbb{R}^d$ and a time step $h>0$,
Almgren, Taylor and Wang \cite{AlmregTaylorWang1993} introduced the following energy functional:
\begin{equation}\label{eq:4_ATWEnergy}
    \script{A}{}(F) := \int_{\mathbb{R}^d}|\grad{\chi}_F| + \frac{1}{h}\int_{F\Delta E_0}\operatorname{dist}(\cdot,\partial E_0)\dL{d},
\end{equation}
where
    $\chi_F$ denotes the characteristic function of $F$, i.e.,
    $\chi_F(x) = 1$ if $x\in F$ and $\chi_F(x) = 0$ if $x\notin F$,
    and $\operatorname{dist}(x,A)$ denotes the distance between a point $x$ and a set $A$;
    $h$ is a positive parameter, and $F\Delta E_0$ denotes the symmetric difference of $F$ and ${E_0}$, namely
    $F\Delta E_0:=(F\backslash E_0)\cup(E_0\backslash F)${;}
    the first term {of \eqref{eq:4_ATWEnergy}} denotes the total variation of $\chi_F$ while in the second term, $\mathcal{L}^d$ denotes
    the $d$-dimensional Lebesgue measure.
They minimized $\script{A}{}(F)$ among all Caccioppoli sets $F$, and 
its minimizer $\chambolleSet{h}(E_0)$ was regarded as a candidate
for the next set to $E_0$. Repeating this process, 
an approximate solution of the mean curvature flow was defined by 
$\chambolleSet{h}^n(E_0) := \chambolleSet{h}(\chambolleSet{h}^{n-1}(E_0))$ for $n\in\mathbb{N}$ with $\chambolleSet{h}^0(E_0) := E_0$.
{Alt}hough they showed its convergence to the smooth mean curvature flow in $L^1$-setting, {it is not clear whether a minimizer of $\script{A}{}(F)$ is unique or not.}
{Later}, Chambolle \cite{Chambolle2004} proposed another energy functional defined by:
\begin{equation*}
    \chambolleEnergyFourOriginal{u} := \int_{\Omega}|\grad{u}| + \frac{1}{2h}\int_{\Omega}(u - d_{E_0})^2\,\dL{d},
\end{equation*}
where $d_{E_0}$ denotes the signed distance function to $E_0$.
The energy $\chambolleEnergyFourOriginal{u}$ was minimized over all $u\in L^2(\Omega)\cap BV(\Omega)$.
Since it is lower semi-continuous and strictly convex, the minimizer $\weaksol{h}{E_0}$ is unique.
Then, the set $\chambolleSet{h}(E_0)$ was defined by the zero sub-level set of $\weaksol{h}{E_0}$.
Chambolle {\cite{Chambolle2004}} showed that {his $T_h(E)$ is a minimizer of $\script{A}{}(F)$, and} the approximate solution tends to the mean curvature flow in $L^1$-setting
if the corresponding {level-set} equation with the initial condition $u_0:=\chi_{\Omega\backslash E_0} - \chi_{E_0}$ has
a unique viscosity solution where $\chi_{E_0}$ is the characteristic function of $E_0$.
This scheme worked only if {$E$ does not touch the boundary $\pOmega$} hence a contact angle condition cannot be treated.

To cope {with} the contact angle problem, the authors \cite{EtoGiga2023} proposed a capillary Chambolle type scheme 
to construct an approximate solution to the mean curvature flow with prescribed contact angle condition. 
For $\beta\in L^\infty(\pOmega)$ with $\|\beta\|_\infty\leq 1$,
they alternately solved the following variational problem:
\begin{equation}\label{eq:4_MinimizingEnergy}
    \min_{u}{E^\beta_h(u)}\qquad\mbox{{with}}\qquad {E^\beta_h(u) :=}\ C_\beta(u) + {\frac{1}{2h}}\int_\Omega(u - d_{\Omega,E_0})^2\,{\dL{d}}.
\end{equation}
Here, the minimum $\eqref{eq:4_MinimizingEnergy}$ should be taken over $L^2(\Omega)\cap\, BV(\Omega)${;}
$d_{\Omega,E_0}$ denotes the signed geodesic distance function to $E_0$ in $\Omega$ (see e.g. \cite[Definition 4]{EtoGiga2023}), and 
\begin{equation*}
    C_\beta(u) := \int_\Omega|\nabla u| + \int_{\pOmega}\beta\gamma u\,\dH{d-1},
\end{equation*}
where $\gamma:BV(\Omega)\to L^1(\pOmega)$ is the trace operator {and $\mathcal{H}^{d-1}$ denotes the $(d-1)$-dimensional Hausdorff measure}.
{For \eqref{eq:4_Target}, we take $\beta = \cos\theta$ and $\theta\in(0,\pi)$ which imply $|\beta(x)| < 1$.}
Since $C_\beta$ is lower semi-continuous (see \cite[Proposition 1.2]{Modica1987} and \cite[Proposition 4]{EtoGiga2023}) 
and the quantity to be minimized in $\eqref{eq:4_MinimizingEnergy}$ is strictly convex in $L^2(\Omega)$, the problem $\eqref{eq:4_MinimizingEnergy}$ admits a unique minimizer $\weaksol{h}{E_0}\in L^2(\Omega)\cap BV(\Omega)$. 
Then, the next set $T_h(E_0)$ to $E_0$ is defined as the zero sub-level set of $\weaksol{h}{E_0}$, namely $T_h(E_0) := \{\weaksol{h}{E_0} \leq 0\}$.
They established well-posedness of the proposed scheme and some consistency with a capillary Almgren--Taylor--Wang type scheme.
However, {the convergence of this scheme as $h$ tends to zero was not discussed  in \cite{EtoGiga2023}.}
The aim of this paper is to show {the} convergence of {this} scheme in {a suitable topology}.

Let us state our main results. 
{
    Since the level-set formulation provides a unique global-in-time solution (up to fattening) as shown in \cite{ChenGigaGoto1991,ES} for $\Omega = \mathbb{R}^d$ (see also \cite{G}),
    we consider the level-set formulation of \eqref{eq:4_Target}. Namely,}
we consider the initial-boundary value problem {for its level-set equation of the form}:
\begin{equation}\label{eq:4_LevelSetEquationBdd}
    \begin{cases}
        u_t = |\nabla u|\operatorname{div}\grad{\anisotropy{}{}{\grad{u}}}\ \ \mbox{in}\ \ \Omega\times(0,T),\\
        \innerproductSingle{\grad{u}}{\normalConv_\Omega} + \beta|\nabla u| = 0 \ \ \mbox{on}\ \ \pOmega\times(0,T),\\
        u(0,\cdot) = u_0\ \ \mbox{in}\ \ {\closure{\Omega}}, \\
    \end{cases}
\end{equation}
where {$T > 0$ is a time horizon;} $u_0:\closure{\Omega}\to\mathbb{R}$ is given as an initial condition and $\phi(p) := |p|$ for $p\in\mathbb{R}^d$.
It is natural to consider such an oblique derivative boundary problem because 
the second condition of $\eqref{eq:4_LevelSetEquationBdd}$ implies that the hypersurface $\{u = 0\}$ 
intersects {the boundary $\pOmega$} with the angle $\arccos{\beta}$. This condition readily corresponds to the second one of $\eqref{eq:4_Target}$.
{Let} $F:(\mathbb{R}^d\backslash{\{\zerovec\}})\times\mathbb{S}^d\to\mathbb{R}$ {and $B:\pOmega\times\mathbb{R}^d\to\mathbb{R}$ be} defined by
\begin{align}
    &F(p,X) := -\operatorname{tr}\left(\left(I_d - \frac{p\otimes p}{|p|^2}\right)X\right)\ \ \mbox{for}\ \  (p,X)\in (\mathbb{R}^d\backslash{\{\zerovec\}})\times\mathbb{S}^d,\label{eq:intro_F_formula}\\
    &{B(x,p) := \innerproductSingle{p}{\nu_\Omega(x)} + \beta(x)|p|},\label{eq:intro_B_formula}
\end{align}
where {$\mathbb{S}^d$ denotes the set of all symmetric matrices in $\mathbb{R}^{d\times d}$;}
$I_d\in\mathbb{R}^{d\times d}$ denotes the identity matrix. 
{
    Then, the problem \eqref{eq:4_LevelSetEquationBdd} can be expressed as
    \begin{equation}\label{eq:4_LevelSetEquationBdd_General}
        \begin{cases}
            u_t + F(\grad{u},\hess{u}) = 0\qquad\mbox{in}\quad\Omega\times(0,T),\\
            B(\cdot, \grad{u}) = 0\qquad\mbox{on}\quad\pOmega\times(0,T),\\
            u(\cdot,0) = u_0\qquad\mbox{in}\quad\closure{\Omega}.
        \end{cases}
    \end{equation}
}

In this study, we adopt the notion of viscosity solution{s} and regard an evolving set 
by mean curvature as the level set of an auxiliary function as discussed in {\cite{G}}.
{Its well-posedness is by now well known by \cite{Ba,IS}.}
{As in \cite{EtoGigaIshii2012,EtoGigaIshii2012_2}, f}or each $u\in UC(\closure{\Omega})$ and a time step $h>0$, we define a function operator {$\chambolleFunction{h}$} by
\begin{equation}\label{eq:T_h_S_h_formula}
    \chambolleFunction{h}u(x) := \sup\{\lambda\in\mathbb{R}\mid x\in \chambolleSet{h}(\{u\geq \lambda\})\}{,}
\end{equation}
{where $UC(\closure{\Omega})$ denotes the space of all uniformly continuous functions in $\closure{\Omega}$.}
In terms of $\chambolleFunction{h}$, an approximate solution $u^h:[0,T]\times\closure{\Omega}\to\mathbb{R}$ to $\eqref{eq:4_LevelSetEquationBdd}$ is defined by 
\begin{equation*}
    u^h(t,x) := \chambolleFunction{h}^{\lfloor\frac{t}{h}\rfloor}u(x),
\end{equation*}
where $\lfloor k\rfloor$ denotes the largest integer which does not exceed $k\in(0,\infty)$.
Then, our main theorem reads as follows:
\begin{thm}\label{thm:final_statement}
    Assume that $\Omega$ is a bounded convex set in $\mathbb{R}^d$
    whose boundary is sufficiently regular {so that} the comparison principle {holds for} \eqref{eq:4_LevelSetEquationBdd}.
    Suppose that $\beta\in C^1(\pOmega)$ {and $\|\beta\|_\infty < 1$}.
    Assume that $|\gradbdd{\pOmega}{\beta}(x)|\leq k(x)$ for all $x\in\pOmega$. Here, $k(x)$ denotes the minimal {nonnegative} {principal (inward)} curvature of $\pOmega$ at the point $x$.
    Then, $u^h$ uniformly converges to the unique viscosity solution to $\eqref{eq:4_LevelSetEquationBdd}$ as $h\to 0$.
\end{thm}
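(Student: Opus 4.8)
The plan is to cast the convergence as an instance of the half-relaxed limit method of Barles--Souganidis type, treating $\chambolleFunction{h}$ as an approximation operator for \eqref{eq:4_LevelSetEquationBdd_General} and exploiting the comparison principle assumed in the hypotheses. Concretely, I would verify three structural properties of the scheme---monotonicity, stability, and consistency (interior and boundary)---and then let the comparison principle upgrade the resulting sub/supersolution inequalities into uniform convergence. Monotonicity of $\chambolleFunction{h}$, namely $u\leq v\Rightarrow \chambolleFunction{h}u\leq\chambolleFunction{h}v$, is inherited from the set operator $\chambolleSet{h}$: since the minimizer of the strictly convex energy $\chambolleEnergyFour{\cdot}$ respects inclusion of sub-level sets, $E\subset F$ forces $\chambolleSet{h}(E)\subset\chambolleSet{h}(F)$, and the level-set definition \eqref{eq:T_h_S_h_formula} transfers this to functions; I would also record $\chambolleFunction{h}(u+c)=\chambolleFunction{h}u+c$ for constants $c$. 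For stability I would show $\{u^h\}$ is uniformly bounded by comparing with constant barriers and that the relaxed limits
\[
    \supersolution{u}(t,x):=\relaxsup{(h,s,y)}{(0,t,x)}u^h(s,y),\qquad
    \subsolution{u}(t,x):=\relaxinf{(h,s,y)}{(0,t,x)}u^h(s,y)
\]
are finite with $\subsolution{u}\leq\supersolution{u}$ and the correct initial trace, using the one-step estimates from \cite{EtoGiga2023}.

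The crux is consistency, i.e.\ that $\supersolution{u}$ is a viscosity subsolution and $\subsolution{u}$ a supersolution of \eqref{eq:4_LevelSetEquationBdd_General}. For an interior point $x_0\in\Omega$ and a smooth test function $\phi$ touching $\supersolution{u}$ from above, I would localize to a small ball, freeze the level set $\{\phi=\phi(x_0)\}$, and compare one step of $\chambolleFunction{h}$ against the geometric evolution of that hypersurface by $\chambolleSet{h}$. The classical analysis of Chambolle's scheme \cite{Chambolle2004} (see also \cite{EtoGigaIshii2012,EtoGigaIshii2012_2}) shows that one step moves a smooth surface by $-h\,F(\grad{\phi},\hess{\phi})$ in the normal direction to leading order, which yields $\phi_t+F(\grad{\phi},\hess{\phi})\geq 0$ at $(t_0,x_0)$, and the reversed inequality for $\subsolution{u}$.

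For a boundary point $x_0\in\pOmega$ the test-function comparison must additionally reproduce the oblique condition $B(\cdot,\grad{u})=0$ in the viscosity sense, meaning either $B(x_0,\grad{\phi})\geq 0$ or the interior inequality holds up to the boundary. Here the capillary term $\int_{\pOmega}\beta\gamma u\,\dH{d-1}$ in $\tildeCb{u}$ is what forces the sub-level sets of the minimizer to meet $\pOmega$ at the angle $\arccos\beta$, and I would quantify this by constructing barriers built from the signed geodesic distance $\geodis{E_0}$. Convexity of $\Omega$ guarantees that the geodesic and Euclidean distances agree in a neighborhood of $\pOmega$, so the scheme genuinely sees the prescribed contact angle rather than a distorted one; meanwhile the hypothesis $|\gradbdd{\pOmega}{\beta}(x)|\leq k(x)$ controls the tangential variation of $\beta$ against the boundary curvature and is exactly the bound needed to fit an admissible barrier at every boundary point. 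Reconciling the geometric one-step motion with the oblique boundary term, and verifying the viscosity dichotomy uniformly in $h$ under only this curvature--gradient balance, is the main obstacle; the interior case is essentially Chambolle's, but the boundary case is where the real work lies.

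Finally, once $\supersolution{u}$ and $\subsolution{u}$ are shown to be a subsolution and a supersolution respectively, the comparison principle assumed for \eqref{eq:4_LevelSetEquationBdd} yields $\supersolution{u}\leq\subsolution{u}$ on $\closure{\Omega}\times[0,T]$. The reverse inequality holds by construction, so $\supersolution{u}=\subsolution{u}=:u$; this equality of the upper and lower relaxed limits of a locally bounded family is equivalent to locally uniform convergence $u^h\to u$, and compactness of $\closure{\Omega}\times[0,T]$ upgrades it to uniform convergence. The limit $u$ is then the unique viscosity solution of \eqref{eq:4_LevelSetEquationBdd}, completing the proof.
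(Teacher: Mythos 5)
Your high-level skeleton is the same as the paper's: verify monotonicity, translation invariance and consistency for $\chambolleFunction{h}$ and then invoke the Barles--Souganidis machinery together with the comparison principle. The monotonicity and final limiting steps are fine. However, there are two genuine gaps in the middle, and you have also misattributed the roles of the hypotheses.

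First, to pass from set-level information about $\chambolleSet{h}$ to the function-level consistency inequalities for $\chambolleFunction{h}$ one needs the exact level-set commutation $\{\chambolleFunction{h}u\geq\lambda\}=\chambolleSet{h}(\{u\geq\lambda\})$, not merely the definition \eqref{eq:T_h_S_h_formula} (which only gives one inclusion). This requires the continuity of $\chambolleSet{h}$ along decreasing sequences of sets, which in turn rests on equi-continuity of the minimizers $\weaksol{h}{E_n}$; the paper obtains this via a Bernstein-type gradient bound $\|\grad{\weaksol{h}{g}}\|_\infty\leq\|\grad{g}\|_\infty$ for the Euler--Lagrange problem \eqref{eq:4_Neumann_problem_master}. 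This is precisely where the convexity of $\Omega$ and the hypothesis $|\gradbdd{\pOmega}{\beta}(x)|\leq k(x)$ enter: they guarantee $\sum_i w_i\,\partial B/\partial x_i\geq 0$ on $\pOmega$, so that $\tfrac{1}{2}|\grad{w}|^2$ satisfies an oblique-derivative maximum principle. Your proposal instead assigns convexity to an (irrelevant) agreement of geodesic and Euclidean distances near $\pOmega$, and assigns the curvature--gradient bound to ``fitting an admissible barrier,'' neither of which is how these hypotheses are actually consumed; the paper even remarks explicitly that convexity is used only for this equi-continuity step. Without the gradient bound your argument has no route to the $\chambolleFunction{h}$--$\chambolleSet{h}$ correspondence on which everything else depends.

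Second, the boundary consistency --- which you correctly identify as ``where the real work lies'' --- is left as an acknowledged obstacle rather than resolved. The paper's resolution is concrete: since the ball barriers of Chambolle's whole-space analysis are incompatible with the oblique condition, one constructs \emph{translative solitons} $\soliton{\mu,k}(x)=\solitonprofile{k}{x'}-x_d+\mu$ with constant contact angle $k\in\{\betalow,\betahigh\}$, which are exact one-step solutions of \eqref{eq:4_Neumann_problem_master} and are mapped by $\chambolleSet{h}$ to their own translates. Squeezing the level set of the test function between such solitons from above and below, combined with a monotonicity of the scheme in $\beta$ (Lemma \ref{lem:4_monotonicity_sh_beta}) and the hypothesis $\|\beta\|_\infty<1$ (which ensures the soliton profile exists and the contact angle is nondegenerate), yields \eqref{eq:4_consistency_super}--\eqref{eq:4_consistency_sub} at boundary points. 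Your plan of ``barriers built from the signed geodesic distance'' does not by itself produce sub/supersolutions of the discrete oblique problem, so the boundary case remains unproved in your proposal.
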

{
    By a comparison principle for \eqref{eq:4_LevelSetEquationBdd} known by \cite{Ba} ({see} \Theorem{thm:4_ComparisonPrinciple}),
    we immediately obtain the following corollary:
    \begin{cor}
        Let $\Omega\subset\mathbb{R}^d$ be a $C^{2,1}$ bounded domain.
        Suppose that $\Omega$ and $\beta$ satisfy the hypotheses in \Theorem{thm:final_statement}.
        Then, $u^h$ uniformly converges to the unique viscosity solution to \eqref{eq:4_LevelSetEquationBdd} as $h\to 0$.
    \end{cor}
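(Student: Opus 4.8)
\section*{Proof proposal}

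The plan is to run the by-now classical Barles--Souganidis programme for monotone approximation schemes, adapted to the oblique-derivative boundary condition. Writing $\supersolution{u}(t,x):=\relaxsup{h}{0}u^h(t,x)$ and $\subsolution{u}(t,x):=\relaxinf{h}{0}u^h(t,x)$ for the upper and lower half-relaxed limits, the aim is to prove that $\supersolution{u}$ is a viscosity subsolution and $\subsolution{u}$ a viscosity supersolution of \eqref{eq:4_LevelSetEquationBdd_General}, both attaining the initial datum $u_0$. Since $\supersolution{u}$ is a subsolution and $\subsolution{u}$ a supersolution sharing the same initial datum, the comparison principle \Theorem{thm:4_ComparisonPrinciple} yields $\supersolution{u}\le\subsolution{u}$; as the reverse inequality holds by definition of the relaxed limits, we conclude $\supersolution{u}=\subsolution{u}=:u$, the unique continuous viscosity solution, and the coincidence of the two relaxed limits on the compact set $[0,T]\times\closure{\Omega}$ upgrades to uniform convergence $u^h\to u$ by the standard compactness argument. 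The three ingredients to verify are therefore \emph{monotonicity}, \emph{stability} and \emph{consistency} of the family $\{\chambolleFunction{h}\}$.

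Monotonicity and stability I expect to be comparatively routine. Monotonicity of $\chambolleFunction{h}$, i.e.\ $u\le v\Rightarrow\chambolleFunction{h}u\le\chambolleFunction{h}v$, reduces through the level-set definition \eqref{eq:T_h_S_h_formula} to inclusion-preservation of the set operator $\chambolleSet{h}$, which in turn follows from the strict convexity and the comparison structure of the variational problem \eqref{eq:4_MinimizingEnergy}: ordering the data $d_{\Omega,E_0}$ orders the unique minimizers $\weaksol{h}{E_0}$, hence their zero sub-level sets. For stability it suffices to note that $\chambolleFunction{h}$ commutes with additive constants and, being monotone, is nonexpansive in the sup norm; together with a one-step estimate controlling $\|\chambolleFunction{h}u-u\|_\infty$ this makes the half-relaxed limits finite and ensures that the initial condition is attained.

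The heart of the matter is consistency, which must be established separately in the interior and on $\pOmega$. In the interior the argument is the usual one for Chambolle's scheme: inserting a smooth test function $\phi$ with $\grad{\phi}(x_0)\neq\zerovec$ and comparing the sub-level sets of $\phi$ with spherical barriers evolved by $\chambolleSet{h}$, one recovers
\[
\lim_{h\to0}\frac{\chambolleFunction{h}\phi(x_0)-\phi(x_0)}{h}=-F(\grad{\phi}(x_0),\hess{\phi}(x_0)),
\]
so that the interior equation of \eqref{eq:4_LevelSetEquationBdd_General} is reproduced; here the convexity of $\Omega$ simplifies matters, since geodesics in $\closure{\Omega}$ are straight segments and $d_{\Omega,E_0}$ coincides with the Euclidean signed distance. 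For a boundary point $x_0\in\pOmega$ the definition of viscosity solution asks that \emph{either} the interior inequality \emph{or} the boundary inequality $B(x_0,\grad{\phi}(x_0))\le0$ (resp.\ $\ge0$) hold, so consistency amounts to showing that whenever the interior inequality fails at $x_0$, the scheme forces the correct sign of $B(x_0,\grad{\phi})=\innerproductSingle{\grad{\phi}}{\normalConv_\Omega(x_0)}+\beta(x_0)|\grad{\phi}|$. This I would verify by comparing the evolving level sets of $u^h$ near $x_0$ with a smooth barrier set meeting $\pOmega$ at an angle slightly off the prescribed $\arccos\beta$, and tracking the displacement produced by $\chambolleSet{h}$.

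I expect the boundary consistency to be the principal obstacle, and this is exactly where the hypotheses enter. The assumption $\|\beta\|_\infty<1$ keeps the contact angle strictly inside $(0,\pi)$, ensuring that such barriers exist with definite opening and that $B$ is a genuinely oblique, non-degenerate boundary operator. Most delicately, the curvature condition $|\gradbdd{\pOmega}{\beta}(x)|\le k(x)$ is precisely the compatibility that lets one fit a \emph{smooth} comparison set whose contact angle varies along $\pOmega$ consistently with $\beta$: the boundary must be able to bend by at least $|\gradbdd{\pOmega}{\beta}|$ in order to absorb the spatial variation of the prescribed angle, for otherwise no admissible barrier realizing the wrong sign of $B$ could be placed against $\pOmega$. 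Establishing the existence of these boundary barriers, and verifying that $\chambolleSet{h}$ moves them in the direction dictated by $B$, is the technical core on which the whole convergence argument rests; once it is in hand, the comparison principle closes the proof as described above.
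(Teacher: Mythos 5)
Your skeleton---monotone scheme, half-relaxed limits, comparison principle, Barles--Souganidis---is exactly the framework the paper uses for \Theorem{thm:final_statement}. But the corollary itself is dispatched in one line: the paper checks that $F$ and $B$ satisfy the structural hypotheses (F1)--(F3), (B1)--(B2) of \Theorem{thm:4_ComparisonPrinciple} when $\|\beta\|_\infty<1$ and $\|\gradbdd{\pOmega}{\beta}\|_\infty<\infty$, so that on a $C^{2,1}$ domain the comparison principle holds, the qualitative hypothesis of \Theorem{thm:final_statement} is met, and that theorem applies verbatim. Re-running the whole convergence argument is legitimate in principle, but your sketch of it has two genuine gaps.

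First, you place the structural hypotheses in the wrong part of the proof. The convexity of $\Omega$ and the bound $|\gradbdd{\pOmega}{\beta}(x)|\le k(x)$ are \emph{not} used to fit boundary barriers whose contact angle varies with $\beta$; they are consumed by Bernstein's method (\Theorem{thm:4_grad_bound_of_weaksol}): one shows that $\frac{1}{2}|\nabla\weaksol{h}{g}|^2$ is a subsolution of an oblique-derivative problem, and the curvature condition is exactly what makes the boundary term $\sum_i w_i\,\partial B/\partial x_i$ nonnegative, yielding $\|\nabla\weaksol{h}{g}\|_\infty\le\|\nabla g\|_\infty$. This uniform gradient bound gives equicontinuity of the minimizers, hence the continuity of $\chambolleSet{h}$ along decreasing sequences (\Lemma{lem:ChambollesetContinuity}), hence the identity $\{\chambolleFunction{h}u\ge\lambda\}=\chambolleSet{h}(\{u\ge\lambda\})$ (\Lemma{lem:4_Sh_Th}). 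Without that identity there is no bridge from set-level barrier comparisons to the pointwise inequalities \eqref{eq:4_consistency_super}--\eqref{eq:4_consistency_sub} for $\chambolleFunction{h}\varphi$, so your consistency argument cannot close. Second, your barriers are the wrong objects: the spherical barriers of Chambolle and Caselles--Chambolle are explicitly unavailable here because the comparison functions must also satisfy the oblique boundary condition in \eqref{eq:4_Neumann_problem_master}. The paper replaces balls by translating solitons, i.e.\ graphs $\solitonprofile{k}{x'}-x_d$, which $\chambolleSet{h}$ maps onto explicit downward translates of themselves and which exist in every direction precisely because $\|\beta\|_\infty<1$ (\Lemma{lem:4_existence_ts}); the quantitative consistency estimate, in the interior and at $\pOmega$ alike, rests on sandwiching the super level sets of the test function between such solitons (\Proposition{prop:4_existence_super_sub_solution}). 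A ``smooth barrier meeting $\pOmega$ at a slightly off angle'' whose displacement under $\chambolleSet{h}$ you cannot compute will not produce the rate $-F(\nabla\varphi,\nabla^2\varphi)h+o(h)$ that consistency requires.
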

}

A key step of the proof for {\Theorem{thm:final_statement}} is to confirm 
that the function operator $\chambolleFunction{h}$ fulfills the following properties:\newline\newline
    [\textbf{Monotonicity}]\newline
    \begin{equation}\label{eq:4_Monotonicity}
        \approxscheme{}{h}u\leq \approxscheme{}{h}v\ \ \mbox{if}\ \ u\leq v{.}
    \end{equation}
    [\textbf{Translation invariance}]
    \begin{align}\label{eq:4_TranslationInvariance}
        \begin{split}
            &\approxscheme{}{h}(u+c) = \approxscheme{}{h}u + c\ \ \mbox{for {all}}\ \ c\in\mathbb{R},\\
            &\approxscheme{}{h}(0) = 0{.}
        \end{split}
    \end{align}
    [\textbf{Consistency}]\newline
    For every $\varphi\in C^2(\closure{\Omega})$, and ${z}\in\Omega$ either
    $\grad{\varphi}({z})\neq\zerovec$ or 
    $\grad{\varphi}({z}) =\zerovec$ and $\hess{\varphi}({z}) = O$, and 
    ${z}\in\pOmega$ with $\innerproductSingle{\grad{\varphi}({z})}{\normalConv_\Omega({z})} + \beta{(z)}|\grad{\varphi}({z})| >0$, it holds that
    \begin{equation}\label{eq:4_consistency_super}
        \relaxsup{h}{0}\frac{\approxscheme{}{h}\varphi({z}) - \varphi({z})}{h} \leq -F_*(\grad{\varphi}({z}),\hess{\varphi}({z})).
    \end{equation}
    Moreover, for every $\varphi\in C^2(\closure{\Omega})$, and ${z}\in\Omega$ either
    $\grad{\varphi}({z})\neq\zerovec$ or 
    $\grad{\varphi}({z}) =\zerovec$ and $\hess{\varphi}({z}) = O$, and 
    ${z}\in\pOmega$ with $\innerproductSingle{\grad{\varphi}({z})}{\normalConv_\Omega({z})} + \beta{(z)}|\grad{\varphi}({z})| < 0$, it holds that
    \begin{equation}\label{eq:4_consistency_sub}
        \relaxinf{h}{0}\frac{\approxscheme{}{h}\varphi({z}) - \varphi({z})}{h} \geq -F^*(\grad{\varphi}({z}),\hess{\varphi}({z})).
    \end{equation}
Here, {
    for a function $F_h$ on $\closure{\Omega}$ which is parametrized by $h>0$,
    we have used the notation that for $x\in\closure{\Omega}$,
    \begin{align*}
        &\relaxsup{h}{0}F_h(x) := \lim_{h\to 0} \sup\{F_h(y)\mid |x-y|<{\delta,\, 0<\delta< h}\},\\
        &\relaxinf{h}{0}F_h(x) := \lim_{h\to 0} \inf\{F_h(y)\mid |x-y|<{\delta,\, 0<\delta< h}\}.
    \end{align*}
}
Moreover, we define the upper(resp, lower) semi-continuous envelope $F^*$ (resp, $F_*$) of $F$ by
\begin{align*}
    &F^*(p,X) := \lim_{\varepsilon\to 0}\sup\{F(q,Y)\mid |p - q|<\varepsilon,\ \|X - Y\|_{{2}}<\varepsilon\},\\
    &F_*(p,X) := \lim_{\varepsilon\to 0}\inf\{F(q,Y)\mid |p - q|<\varepsilon,\ \|X - Y\|_{{2}}<\varepsilon\}{,}
\end{align*}
{
where for a matrix $X = (x_{ij})_{1\leq i,j\leq d}\in\mathbb{R}^{d\times d}$, 
$\|X\|_2$ denotes the Hilbert--Schmidt norm of $X$, i.e.,
$\|X\|_2 := \sqrt{\sum_{i,j=1}^d x_{ij}^2}$.
}

{
    If the limit equation \eqref{eq:4_LevelSetEquationBdd} has a comparison principle and $S_h$ satisfies the
    conditions \eqref{eq:4_Monotonicity}--\eqref{eq:4_consistency_sub}, a general theory for the monotone scheme \cite[Theorem 2.1]{BarlesSouganidis1991}
    yields the desired result. In our case, we know \eqref{eq:4_LevelSetEquationBdd} has a comparison principle (\Theorem{thm:4_ComparisonPrinciple})
    so the main task is to prove \eqref{eq:4_Monotonicity}--\eqref{eq:4_consistency_sub} for $S_h$ (\Theorem{thm:4_S_h_satisfies_MTC}).
    It is not difficult to prove \eqref{eq:4_Monotonicity} if we take the geodesic distance in \eqref{eq:4_MinimizingEnergy}.
    The property \eqref{eq:4_TranslationInvariance} is easy to confirm from the definition of $S_h$.
    Main efforts for the convergence result of our scheme are devoted to prove \eqref{eq:4_consistency_super} and \eqref{eq:4_consistency_sub}.
}

{
    To this end, we first establish a relation between $S_h$ and $T_h$ (\Lemma{lem:4_Sh_Th}) to interpret 
    the formulae \eqref{eq:4_consistency_super} and \eqref{eq:4_consistency_sub} in that for $T_h$.
    {More explicitly}, we show
    \begin{equation}\label{eq:intro_Sh_Th_ls}
        \{S_hu\geq\lambda\} = T_h(\{u\geq\lambda\})\qquad\mbox{for {all}}\quad\lambda\in\mathbb{R}.
    \end{equation}
    The relation \eqref{eq:intro_Sh_Th_ls} is expected to hold because of the definition of $S_h$.
    One of important criteria to ensure \eqref{eq:intro_Sh_Th_ls} is the continuity of $T_h$ (\Lemma{lem:ChambollesetContinuity}) which is defined by
    \begin{equation}\label{eq:intro_Th_continuity}
        \bigcap_{n=1}^\infty T_h(E_n) = T_h(E)\qquad\mbox{as}\quad n\rightarrow\infty,
    \end{equation}
    where $\{E_n\}_{n\in\mathbb{N}}$ is an arbitrary non-increasing sequence of set{s} in $\closure{\Omega}$ {with $E = \bigcap_{n=1}^\infty E_n$}.
    Thanks to the monotonicity of $T_h$, we {easily} see that the left-hand side of \eqref{eq:intro_Th_continuity} includes the right-hand side.
    To show the converse inclusion, we seek a subsequence $\{\weaksol{h}{E_n}\}_n$ which uniformly converges to a function $w$ in $\closure{\Omega}$,
    and observe that $w = \weaksol{h}{E}$ in $\closure{\Omega}$. Since $\weaksol{h}{E_n}$ is uniformly bounded with respect to $n$ by a maximum principle,
    {the existence of such a subsequence can be proved by the Ascoli--Arzel\`a theorem provided that $\weaksol{h}{E_n}$ is equi-continuous with respect to $n\in\mathbb{N}$.}
    We eventually know that the limit function $w$ must equal $\weaksol{h}{E}$ by the Lipschitz continuity of the map
    $L^2(\Omega)\ni g\mapsto\weaksol{h}{g}\in L^2(\Omega)$ (\Proposition{prop:Lipschitz_Continuity})
    and the uniform convergence of $\geodis{E_n}$ to $\geodis{E}$.
}

To derive the equi-continuity of $\weaksol{h}{E_n}$, we show that the gradient $\grad{\weaksol{h}{E_n}}$
is bounded by a constant which is independent of $n\in\mathbb{N}$. For this, we adopt Bernstein's method.
Namely, we are led to show that {$\frac{1}{2}|\grad{\weaksol{h}{E_n}}|^2$} is a subsolution to an elliptic problem with an oblique derivative boundary condition
and to apply a comparison principle (\Lemma{lem:4_comparison_principle}) which is available for the problem.
{This procedure involving Bernstein's method} requires the convexity of $\Omega$ and the assumption that the contact angle function $\beta$ is continuously differentiable on $\pOmega$
and its gradient is bounded by the minimal {nonnegative} principal (inward) curvature of $\pOmega$ at each point.

{
    Using the relation between $S_h$ and $T_h$ that we have obtained so far, we represent the formulae \eqref{eq:4_consistency_super} and \eqref{eq:4_consistency_sub}
    in terms of the super level sets $\sublevel{\varphi}{\mu} := \{\varphi\geq\mu\}$ with $\mu=\varphi(z)$, and intend to prove that
    the following locally uniform limit in $\closure{\Omega}$ {(Proposition \ref{prop:4_existence_super_sub_solution})}:
    \begin{equation}\label{eq:intro_Th_consistency}
        \left|\frac{\weaksol{h}{\sublevel{\varphi}{\mu}} - \geodis{\sublevel{\varphi}{\mu}}}{h} + \kappa_{\sublevel{\varphi}{\mu}}\right|\rightarrow 0\qquad\mbox{as}\quad h\rightarrow 0.
    \end{equation}
    For the case when $z\in\Omega$, \eqref{eq:intro_Th_consistency} indicates that $\weaksol{h}{\sublevel{\varphi}{\mu}}$ approximately solves the inclusion:
    \begin{equation*}
        \frac{w - \geodis{\sublevel{\varphi}{\mu}}}{h} + \subdiff{C_0}{w}\ni 0\qquad\mbox{in}\quad\Omega,
    \end{equation*}
    which is nothing but a discretization of the first equation of \eqref{eq:4_LevelSetEquationBdd}.
    Meanwhile, if $z\in\pOmega$, we need the assumption that $\|\beta\|_\infty < 1$ to construct a viscosity super(resp, sub)solution to \eqref{eq:4_Neumann_problem_master}
    which approximates the following inclusion:
    \begin{equation}\label{eq:intro_inclusion}
        \frac{w -\geodis{\sublevel{\varphi}{\mu}}}{h} + \subdiff{C_\beta}{w} \ni 0\qquad\mbox{in}\quad\closure{\Omega}.
    \end{equation}
    In fact, we deduce from the characterization of $\subdiff{C_\beta}{u}$ (see \cite[Theorem 2]{EtoGiga2023})
    that \eqref{eq:intro_inclusion} is equivalent to that $w$ solves \eqref{eq:4_Neumann_problem_master}.
    In \cite{EtoGigaIshii2012,EtoGigaIshii2012_2}, a viscosity super(resp, sub)solutions were
    constructed by a ball $B(0,R)$ which is expected to approximate $\sublevel{\varphi}{\mu}$ nearby $z$.
    This is because Chambolle's scheme yield{s} another ball $B(0,\widetilde{R})$, and this $\widetilde{R}$ can be
    explicitly computed. Due to the oblique derivative boundary condition, we require another type of subsets in $\closure{\Omega}$
    to approximate $\sublevel{\varphi}{\mu}$. {Here}, we notice that the characterization of $\subdiff{C_\beta}{u}$
    again gives rigorous solutions to \eqref{eq:4_Neumann_problem_master} 
    in a short time when $\beta$ is constant (\Lemma{lem:4_existence_ts}).
    {These} rigorous solutions are so-called translating solitons in the literature.
    In this research, we compare $\sublevel{\varphi}{\mu}$ with these solitons instead of balls.
    By geometry, this soliton is available not only for $z\in\Omega$ but also for $z\in\pOmega$ whenever $\|\beta\|_\infty<1$.
}
This is the basic idea to prove the main theorem.

Let us review existing works related to an energy minimizing scheme for the mean curvature flow.
{
    For the case when interfaces do not touch the boundary,
    Almgren, Taylor and Wang \cite{AlmregTaylorWang1993} {derived several properties of a limit of their approximate solution and called it}
    a \textit{flat $\Phi$ curvature flow}.  {They proved its convergence to a smooth curvature flow up to the time when the latter exists.}
    Later, Luckhaus and Sturzenhecker \cite{LuckhausSturzenhecker} showed its convergence to a distributional solution $\chi$
    to the mean curvature flow under \textit{no mass loss condition}:
    \begin{equation*}
        \int_0^T\int_{\mathbb{R}^d}|\grad{\chi_h}|\to\int_0^T\int_{\mathbb{R}^d}|\grad{\chi}|\qquad\mbox{as}\quad h\to 0,
    \end{equation*}
    where $\chi_h$ denotes the characteristic function of a minimizer of $\script{A}{}(F)$.
    Philippis and Laux \cite{PhilippisLaux2018} proved that this assumption is not necessary for the convergence result
    whenever the initial data $E_0$ is \textit{outward minimizing}, i.e., it holds that
    \begin{equation}\label{eq:mean_convex}
        \int_{\mathbb{R}^d}|\grad{\chi_{E_0}}|\leq \int_{\mathbb{R}^d}|\grad{\chi_F}|\qquad\mbox{if}\quad E_0\subset F.
    \end{equation}
    For instance, if $E_0$ is mean convex, then the condition \eqref{eq:mean_convex} is satisfied.
    For a bounded initial data $E_0$ and $\Omega\subset\mathbb{R}^d$ strictly including $E_0$,
    Chambolle \cite{Chambolle2004} showed that {his approximate solution constructed by the zero level set of the unique minimizer of $E_h(u)$ converges (in $L^1$ sense) to a \textit{level-set flow} (up to fattening).}
    It is known that $T_h(E_0)$ remains convex if $E_0$ is convex (see Caselles and Chambolle \cite{CasellesChambolle2006}).
    For a simple proof of the convergence {(also in Hausdorff distance sense)},
    we refer the reader to Chambolle and Novaga \cite[Proposition 2.1, Proposition 4.1]{ChambolleNovaga2007}.
    For an unbounded initial data $E_0$, the authors and Ishii \cite{EtoGigaIshii2012,EtoGigaIshii2012_2} showed
    the convergence {(up to fattening in the sense of Hausdorff distance)} of {their approximate solution constructed by $E_h(u)$}.
    Therein, they translated the set operator $T_h$
    into a function operator $S_h$ as in \eqref{eq:T_h_S_h_formula} and showed that $T_h$
    is a morphological operator (see e.g., \cite[Definition 4.4]{Cao2003}). They utilized a $\sup$-$\inf$ representation for $S_h$ 
    to obtain its generator as in \eqref{eq:4_consistency_super} and \eqref{eq:4_consistency_sub}.
    In particular, if $E_0$ is the complement of a bounded set, its treatment was explained in \cite[\S 6.2]{ChambolleMoriniPonsiglione2015}.
    Chambolle, Gennaro and Morini \cite{ChambolleDeGennaroMorini2023}
    considered
    {
        such a scheme called a \textit{minimizing movement scheme} for the mean curvature flow with a time-dependent spatially inhomogeneous driving force $f(x,t)$,
        an inhomogeneous anisotropic interfacial energy density $\phi$ and a mobility $\psi(x,\nu(x))$, namely they studied the equation:
        \begin{equation}\label{eq:inhomogeneous_problem}
            \begin{cases}
                V = \psi(x,\nu(x))\{-\operatorname{div}_{\Gamma_t}\nabla_p\phi(x,\nu(x)) + f(x,t)\}\qquad\mbox{for}\quad x\in\Gamma_t,\ t > 0,\\
                \Gamma_0 = \Gamma,
            \end{cases}
        \end{equation}
        where $\Gamma$ is an initial data, and $p\mapsto\phi(x,p)$ is an anisotropy, i.e., a convex, positively $1$-homogeneous function
        with respect to the variable $p\in\mathbb{R}^d$.
        They showed that a limit of approximate solutions constructed by their minimizing movement scheme is a \textit{flat flow}, and
        it is a distributional solution (BV-solution) to the equation \eqref{eq:inhomogeneous_problem} under no mass loss condition (see \cite[Theorem 1.2]{ChambolleDeGennaroMorini2023}).
        Note that their anisotropy for this result includes crystalline anisotropy, i.e., $\phi(x,p)$ need not be $C^1$ in $p$ on $\{|p| = 1\}$,
        typically piecewise linear. They also proved that their approximate solution converges to the level-set flow, 
        and it is also a way to construct a solution of the corresponding level-set flow equation (see \cite[Theorem 1.4]{ChambolleDeGennaroMorini2023}).
        However, crystalline anisotropy was excluded in this result. For spatially homogeneous cases, i.e., $\phi$, $\psi$ and $f$ are independent of $x$,
        a level-set crystalline mean curvature flow equation is well-studied, and its well-posedness was established by \cite{ChambolleMoriniPonsiglione2017,ChambolleMoriniNovagaPonsiglione2019,GigaPozar2016,GigaPozar2018}
        (see also a review paper \cite{GigaPozar2022}). However, the case of spatially inhomogeneous crystalline anisotropy is not yet studied 
        through a perturbed argument by \cite{ChambolleMoriniNovagaPonsiglione2019} which may lead the existence of a solution.
        For homogeneous case, we note that Ishii \cite{IshiiK2014} proved that the minimizing movement scheme converges to the level-set flow
        for crystalline mean curvature flow provided that the corresponding level-set flow equation is well-posed.
        The solution in \cite{GigaPozar2016,GigaPozar2018} was constructed by a solution of an approximate equation, 
        while the solution in \cite{ChambolleMoriniPonsiglione2017,ChambolleMoriniNovagaPonsiglione2019} was constructed by a minimizing movement scheme.
        In \cite{GigaPozar2020}, spatially inhomogeneous driving force term $f(x,t)$ was allowed. The method of \cite{GigaPozar2016,GigaPozar2018,GigaPozar2020} so far
        needs to assume that $\phi$ is piecewise linear but allows nonlinear dependency on the curvature term in $V$.
    }
}

For {the case when interfaces touch the boundary $\pOmega$},
Bellettini and Kholmatov \cite{BellettiniKholmatov2018} considered
a minimizing problem for a variant energy of $\eqref{eq:4_ATWEnergy}$ defined by
\begin{equation}\label{eq:4_CapillaryATWEnergy}
    \script{A}{\beta}(F) := \int_{\halfplane{{d}}}|\grad{\chi_F}| + \int_{\partial\halfplane{{d}}}\beta\gamma\chi_F\dH{d-1} + \frac{1}{h}\int_{\halfplane{{d}}\cap(F\Delta E)}\operatorname{dist}(\cdot,\partial E)\dL{d}{,}
\end{equation}
{where $\halfplane{d} := \mathbb{R}^{d-1}\times(0,\infty)$, and} the energy \eqref{eq:4_CapillaryATWEnergy} was minimized among all Caccioppoli sets in $\halfplane{{d}}$.
They adopted a set theoretic approach and proved that 
a minimizing sequence for \eqref{eq:4_CapillaryATWEnergy} converges to
a generalized minimizing movement (GMM) {(see \cite[Theorem 7.1]{BellettiniKholmatov2018})}.
The GMM {was shown to be} a distributional solution to the mean curvature flow {equation} with a contact angle condition
{provided that the $(d-1)$-dimensional Hausdorff measure of discrete solutions converges to that of the GMM}
 {(see \cite[Theorem 8.6]{BellettiniKholmatov2018})}.
They also showed regularity of the GMM up to the boundary provided that $\beta$ is Lipschitz continuous on $\pOmega$ (see \cite[Theorem 5.3]{BellettiniKholmatov2018}).
In fact, the study \cite{EtoGiga2023} was inspired by their work {to introduce} the definition of the capillary Chambolle type energy \eqref{eq:4_MinimizingEnergy}.
{For a study of the GMM for a partition of $\mathbb{R}^d$ with mobility and driving force, we refer the reader to Bellettini, Chambolle and Kholmatov \cite{BellettiniChambolleKholmatov2021}.}

{
    In \cite[\S 7]{EtoGiga2023}, the authors implemented the proposed scheme using the split Bregman method.
    Therein, they calculated the first variation of $E^\beta_h(u)$ with respect to $\grad{u}$ and $u$ separately
    and obtained a Neumann boundary problem in a strip $\Omega\subset\mathbb{R}^2$. This problem was solved by the finite difference method.
    {
        They gave two examples of open curves with one end on $\{0\}\times\mathbb{R}$ and the other end on $\{2\}\times\mathbb{R}$ where $\Omega = (0,2)\times\mathbb{R}$.
        In the first example, we set $\beta\equiv\cos{\frac{\pi}{4}}$. In the second example, we set
        $\beta = \cos{\frac{3\pi}{4}}$ at $x = 0$ and $\beta = \cos{\frac{\pi}{4}}$ at $x = 2$.
        However, the discrete definition of $\beta$ in the second example was missing. Here, we give it for the reader's convenience:
    }
    \begin{equation*}
        \beta_{i,j} := \begin{cases}
            -\frac{1}{\sqrt{2}} = \cos{\frac{3\pi}{4}}\qquad\mbox{if}\quad j = 1,\\
            \frac{1}{\sqrt{2}} = \cos{\frac{\pi}{4}}\qquad\mbox{if}\quad j = N_x,\\
            0\qquad\mbox{otherwise},
        \end{cases}
    \end{equation*}
    where the strip $\Omega$ is discretized by mesh points $(x_j, y_i)$ with $1\leq i\leq N_y$ and $1\leq j\leq N_x$,
    say {the discretized} $\closure{\Omega}$ {equals} $\bigcup_{i=1}^{N_y}\bigcup_{j=1}^{N_x}\{(x_j,y_i)\}$;
    the symbol $\beta_{i,j}$ denotes the value of $\beta$ at $(x_j, y_i)$.
}

This paper is organized as follows. In Section \ref{sec:4_Preliminaries}, 
we collect basic definitions, notations and facts of convex analysis and viscosity solutions.
In Section \ref{sec:4_CapillaryChambolleTypeScheme}, we recall a capillary Chambolle type scheme which was proposed in \cite{EtoGiga2023}.
Therein, we continue to explore its properties which are crucial to derive main results in this study.
Section \ref{sec:4_Convergence} is devoted to give a proof for convergence of the proposed scheme
under some assumptions on the domain $\Omega$ and the contact angle function $\beta$.
{Finally, we conclude with summarizing consequence of this paper in Section \ref{sec:4_Conclusion}}.

{A preliminary version of this paper was included in the first author's phD thesis.}

\section{Preliminaries}\label{sec:4_Preliminaries}
\subsection{Convex analysis}\label{sec:4_ConvexAnalysis}
In this section, we recall basic facts from the convex analysis.
Throughout this section, let $X$ be a normed space and $X^*$ be the dual space of $X$.
Let $f:X\to\mathbb{R}\cup\{\pm\infty\}$.
\begin{dfn}[Subdifferential]
    For $u\in X$, the subdifferential $\subdiff{f}{u}\subset X^*$ is defined by
    \begin{equation*}
        p\in\subdiff{f}{u} :\Longleftrightarrow f(v) \geq f(u) + \innerproductSingle{p}{v - u}\qquad{\mbox{for all}}\quad v\in X.
    \end{equation*}
\end{dfn}
\begin{dfn}[Conjugate function]
    The conjugate function $f^*:X^*\to\mathbb{R}\cup\{\pm\infty\}$ of $f$ is defined by
    \begin{equation*}
        f^*(p) := \sup_{u\in X}\{\innerproductSingle{p}{u} - f(u)\}\qquad{\mbox{for}}\quad p\in X^*.
    \end{equation*}
\end{dfn}
\begin{prop}[Fenchel identity]\label{prop:FenchelIdentity}
    Suppose that $f$ is lower semi-continuous and convex. Then, it holds that
    {for $u\in X$ and $p\in X^*$,}
    \begin{equation*}
        p\in\subdiff{f}{u} \Longleftrightarrow u\in\subdiff{f^*}{p} \Longleftrightarrow f(u) + f^*(p) = \innerproductSingle{p}{u}
    \end{equation*}
\end{prop}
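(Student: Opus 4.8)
The plan is to decompose the two stated equivalences into a purely definitional part and a single part that genuinely uses the hypotheses. The backbone is the \emph{Fenchel--Young inequality}, which follows immediately from the definition of the conjugate: for every $u\in X$ and $p\in X^*$ one has $\innerproductSingle{p}{u}\leq f(u) + f^*(p)$, since $\innerproductSingle{p}{u} - f(u)$ is one of the quantities over which the supremum defining $f^*(p)$ is taken. My first step would be to establish the right-hand equivalence,
\begin{equation*}
    p\in\subdiff{f}{u}\Longleftrightarrow f(u) + f^*(p) = \innerproductSingle{p}{u},
\end{equation*}
and to note that this equivalence requires neither convexity nor lower semi-continuity of $f$.

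For the forward direction, if $p\in\subdiff{f}{u}$ then $f(v)\geq f(u) + \innerproductSingle{p}{v - u}$ for all $v\in X$; rearranging gives $\innerproductSingle{p}{v} - f(v)\leq\innerproductSingle{p}{u} - f(u)$, and taking the supremum over $v$ yields $f^*(p)\leq\innerproductSingle{p}{u} - f(u)$. Combined with the Fenchel--Young inequality, this forces equality. Conversely, if $f(u) + f^*(p) = \innerproductSingle{p}{u}$, then for every $v\in X$ the definition of $f^*$ gives $\innerproductSingle{p}{u} - f(u) = f^*(p)\geq\innerproductSingle{p}{v} - f(v)$, which rearranges to the subgradient inequality $f(v)\geq f(u) + \innerproductSingle{p}{v - u}$, i.e.\ $p\in\subdiff{f}{u}$.

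The second step is to apply exactly the same argument to the conjugate $f^*$, which as a supremum of affine functions is automatically convex and lower semi-continuous. This produces
\begin{equation*}
    u\in\subdiff{f^*}{p}\Longleftrightarrow f^*(p) + f^{**}(u) = \innerproductSingle{p}{u},
\end{equation*}
where $f^{**} := (f^*)^*$ is the biconjugate. The final step, and the only place where the hypotheses on $f$ are used, is to invoke the \emph{Fenchel--Moreau (biconjugate) theorem}: since $f$ is convex and lower semi-continuous (and proper, so that the subdifferentials involved are nonempty only where $f$ is finite), one has $f^{**} = f$. Substituting this identity into the displayed equivalence for $f^*$ gives $u\in\subdiff{f^*}{p}\Longleftrightarrow f^*(p) + f(u) = \innerproductSingle{p}{u}$, and chaining this with the first-step equivalence closes the circle of three conditions.

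The main obstacle is precisely the biconjugate identity $f^{**} = f$: the inequality $f^{**}\leq f$ is elementary, but the reverse inequality rests on a Hahn--Banach separation argument in $X\times\mathbb{R}$, separating a point lying strictly below the graph of $f$ from the closed convex epigraph of $f$. Everything else in the proof is bookkeeping with the Fenchel--Young inequality. I would therefore either cite the Fenchel--Moreau theorem as a standard fact of convex analysis or, if a self-contained treatment is wanted, carry out the separation step explicitly, taking care of the degenerate cases $f\equiv+\infty$ and the points where $\subdiff{f}{u} = \emptyset$.
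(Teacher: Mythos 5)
Your proposal is correct, and it is precisely the standard argument (Fenchel--Young bookkeeping for the equivalence $p\in\subdiff{f}{u}\Leftrightarrow f(u)+f^*(p)=\innerproductSingle{p}{u}$, then Fenchel--Moreau biconjugation $f^{**}=f$ to transfer it to $f^*$) that underlies the paper's proof, which consists solely of a citation to Ekeland--Temam (Proposition 5.1 and Corollary 5.2). Your parenthetical remark that properness is needed for the biconjugate step is a legitimate refinement: the paper's statement, as written for $f:X\to\mathbb{R}\cup\{\pm\infty\}$, silently inherits that hypothesis from the cited reference.
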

\begin{proof}
    See \cite[Proposition 5.1, Corollary 5.2]{EkelandTemam}.
\end{proof}
\begin{rem}[Characterization of conjugate functions]\label{rem:CharacterConjugate}
    Suppose that $f$ is positively homogeneous of degree $1$ and $f(0) = 0$. Then, it is easy to see that
    \begin{equation*}
        f^*(p) = 
        \begin{cases}
            0\ \ \mbox{if}\ \ p\in K,\\
            \infty\ \ \mbox{if}\ \ p\notin K,
        \end{cases}
    \end{equation*}
    where $K = \subdiff{f}{0}$.
\end{rem}
\subsection{Viscosity solution}\label{sec:ViscositySolution}
In this section, we briefly recall the notion of viscosity solution{s} which is a kind of weak solutions to degenerate parabolic equations. 
Partial differential equations under consideration {is of} the form:
\begin{equation}\label{eq:4_DegeneraateParabolicEquation}
    \begin{cases}
        u_t + F(x,t,u,\nabla u, \nabla^2 u) = 0\ \ \mbox{in}\ \ \Omega\times(0,T),\\
        B(x,t,u,\nabla u, \nabla^2 u) = 0\ \ \mbox{on}\ \ \pOmega\times(0,T),\\
        u(0,\cdot) = u_0\ \ \mbox{in}\ \ \closure{\Omega},
    \end{cases}
\end{equation}
where $F$ and $B$ are respectively functions defined on dense subset{s} in $\closure{\Omega}\times[0,T]\times\mathbb{R}\times\mathbb{R}^d\times\mathbb{S}^d$ 
and $\pOmega\times[0,T]\times\mathbb{R}\times\mathbb{R}^d\times\mathbb{S}^d$.
Here, $\mathbb{S}^d$ denotes the set of all symmetric matrices in $\mathbb{R}^{d\times d}$.
In this setting, let us define a {viscosity} sub- and supersolution to the problem $\eqref{eq:4_DegeneraateParabolicEquation}$.
\begin{dfn}[Viscosity solution]
    A function $u:\closure{\Omega}\times(0,T)\to\mathbb{R}$ is called a viscosity subsolution to $\eqref{eq:4_DegeneraateParabolicEquation}$ provided that $u^*(x,t)<\infty$ for all $(x,t)\in\closure{\Omega}\times(0,T)$ and for any $\varphi\in C^2(\closure{\Omega}\times(0,T))$ and $(\hat{x},\hat{t})\in\closure{\Omega}\times(0,T)$ such that $u^* - \varphi$ takes a local maximum at $(\hat{x},\hat{t})$,
    \begin{equation}\label{eq:4_F-solution}
        \begin{cases}
            \varphi_t(\hat{x},\hat{t}) + F_*(\hat{x},\hat{t},u^*(\hat{x},\hat{t}),\nabla\varphi(\hat{x},\hat{t}), \nabla^2\varphi(\hat{x},\hat{t})) \leq 0\ \ \mbox{if}\ \ \nabla\varphi(\hat{x},\hat{t}) \neq 0, \\
            \varphi_t(\hat{x},\hat{t}) \leq 0\ \ \mbox{if}\ \ \nabla\varphi(\hat{x},\hat{t})=0\ \ \mbox{and}\ \ \nabla^2\varphi(\hat{x},\hat{t})=O
        \end{cases}
    \end{equation}
    holds if $\hat{x}\in\Omega$ and either $\eqref{eq:4_F-solution}$ or $B_*(\hat{x},\hat{t},u^*(\hat{x},\hat{t}),\nabla\varphi(\hat{x},\hat{t}),\nabla^2\varphi(\hat{x},\hat{t})) \leq 0$ if $\hat{x}\in\pOmega$.
    {A function} $u$ is called a viscosity supersolution to $\eqref{eq:4_DegeneraateParabolicEquation}$ provided that $u_*(x,t)>-\infty$ for all $(x,t)\in\closure{\Omega}\times(0,T)$ and for any $\varphi\in C^2(\closure{\Omega}\times(0,T))$ and $(\hat{x},\hat{t})\in\closure{\Omega}\times(0,T)$ such that $u_*-\varphi$ takes a local minimum at $(\hat{x},\hat{t})$,
    \begin{equation}\label{eq:4_F-solution-super}
        \begin{cases}
            \varphi_t(\hat{x},\hat{t}) + F^*(\hat{x},\hat{t},u_*(\hat{x},\hat{t}),\nabla\varphi(\hat{x},\hat{t}), \nabla^2\varphi(\hat{x},\hat{t})) \geq 0\ \ \mbox{if}\ \ \nabla\varphi(\hat{x},\hat{t}) \neq 0, \\
            \varphi_t(\hat{x},\hat{t}) \geq 0\ \ \mbox{if}\ \ \nabla\varphi(\hat{x},\hat{t})=0\ \ \mbox{and}\ \ \nabla^2\varphi(\hat{x},\hat{t})=O
        \end{cases}
    \end{equation}
    holds if $\hat{x}\in\Omega$ and either $\eqref{eq:4_F-solution-super}$ or $B^*(\hat{x},\hat{t},u_*(\hat{x},\hat{t}),\nabla\varphi(\hat{x},\hat{t}),\nabla^2\varphi(\hat{x},\hat{t}))\geq 0$ if $\hat{x}\in\pOmega$.
    {A function} $u$ is called a viscosity solution if $u$ is a sub- and supersolution of $\eqref{eq:4_DegeneraateParabolicEquation}$.
\end{dfn}
\begin{dfn}[Degenerate ellipticity]\label{def:4_degenerate_elliptic}
    $F:{\closure{\Omega}}\times[0,T]{\times\mathbb{R}}\times\mathbb{R}^d\times\mathbb{S}^d\to\mathbb{R}$ is said to be degenerate elliptic if
    {for any $(x,t,r,p)\in\closure{\Omega}\times[0,T]\times\mathbb{R}\times(\mathbb{R}^d\backslash\{\zerovec\})$, it holds that}
    \begin{equation*}
        F(x,t,r,p,X) \leq F(x,t,r,p,Y)\ \ \mbox{for}\ {X,Y\in\mathbb{S}^d}\quad{\mbox{with}}\ \ X\geq Y,
    \end{equation*}
    where $X\leq Y$ means that $\left<X\xi,\xi\right>\leq\left<Y\xi,\xi\right>$ for every $\xi\in\mathbb{R}^d$.
\end{dfn}
Let us recall a comparison principle for the problem $\eqref{eq:4_DegeneraateParabolicEquation}$ {from \cite[Theorem 3.1]{Ba}:}
\begin{thm}[Comparison principle]\label{thm:4_ComparisonPrinciple}
    Let $\Omega\subset\mathbb{R}^d$ be a bounded domain {with $C^{2,1}$ boundary,} and {let} $u_0\in C(\closure{\Omega})$. 
    Let $u$ and $v$ be respectively a bounded upper semi-continuous subsolution and a bounded lower semi-continuous supersolution 
    of $\eqref{eq:4_DegeneraateParabolicEquation}$. Suppose that $F$ and $B$ {fulfill the following conditions:}
    {
        \begin{description}
            \item[(F1)] For every $R>0$, there exists a constant $C_R\in\mathbb{R}$ such that for every 
            $x\in\closure{\Omega}$, $t\in[0,T]$, $-R\leq v\leq u\leq R$, $p\in\mathbb{R}^d$ and $X\in\mathbb{S}^d$,
            it holds that
            \begin{equation*}
                F(x,t,u,p,X) - F(x,t,v,p,X)\geq C_R(u-v).
            \end{equation*}
            \item[(F2)] For any $R, K > 0$, there exists a function $\omega_{R,K}:[0,\infty)\to\mathbb{R}$ such that
            $\omega_{R,K}(s)\to 0$ as $s\downarrow 0$ and for all $\eta > 0$, it holds that
            \begin{equation*}
                F(y,t,u,q,Y) - F(x,t,u,p,X) \leq \omega_{R,K}\left(\eta + |x-y|(1 + |p|\lor |q|) + \frac{|x-y|^2}{\varepsilon^2}\right)
            \end{equation*}
            for any $x,y\in\closure{\Omega}$, $t\in[0,T]$, $u\in[-R,R]$, $p,q\in\mathbb{R}^d\backslash\{\zerovec\}$ and
            $X,Y\in\mathbb{S}^d$ satisfying
            \begin{align}
                &-\frac{K\eta}{\varepsilon^2}I_{2d}\leq\begin{pmatrix}
                    X & O\\
                    O & -Y
                \end{pmatrix}\leq\frac{K\eta}{\varepsilon^2}\begin{pmatrix}
                    I_d & -I_d\\
                    -I_d & I_d
                \end{pmatrix} + K\eta I_{2d},\label{eq:condition_F2} \\
                &|p - q| \leq K\varepsilon(|p|\land |q|)\qquad\mbox{and}\qquad|x - y| \leq K\eta\varepsilon,\nonumber
            \end{align}
            where $|p|\lor|q| :=\max\{|p|,|q|\}$ and $|p|\land|q|:=\min\{|p|,|q|\}$.
            \item[(F3)] $F\in C(\closure{\Omega}\times[0,T]\times\mathbb{R}\times(\mathbb{R}^d\backslash\{\zerovec\})\times\mathbb{S}^d)$ and $F^*(x,t,u,\zerovec,O) = F_*(x,t,u,\zerovec,O)$
            for every $x\in\closure{\Omega}$, $t\in[0,T]$ and $u\in\mathbb{R}$. In other words, $F(x,t,u,\cdot,\cdot)$ is continuous at $(\zerovec,O)$.
            \item[(B1)] For any $R>0$, there exists $C_R > 0$ such that for all $\lambda > 0$, $x\in\pOmega$, $t\in[0,T]$, $-R\leq v\leq u\leq R$ and $p\in\mathbb{R}^d$, it holds that
            \begin{equation*}
                B(x,t,u,p+\lambda\nu_\Omega(x)) - B(x,t,v, p) \geq C_R\lambda.
            \end{equation*}
            \item[(B2)] There exists a constant $C>0$ such that for any $x,y\in\closure{\Omega}$, $t\in[0,T]$, $u\in\mathbb{R}$ and $p,q\in\mathbb{R}^d$, it holds that
            \begin{equation*}
                |B(x,t,u,p) - B(y,t,u,q)| \leq C\{(|p| + |q|)|x-y| + |p-q|\}.
            \end{equation*}
        \end{description}
    }
    Then, it holds that $u\leq v$ in $\closure{\Omega}\times[0,T]$.
\end{thm}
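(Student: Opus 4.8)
The plan is to argue by contradiction using the doubling-of-variables method of Ishii--Lions, adapted to the oblique derivative boundary condition through a boundary-repelling perturbation of the test function. Suppose, contrary to the claim, that $M := \sup_{\closure{\Omega}\times[0,T]}(u-v) > 0$. Fix $R>0$ with $\|u\|_\infty,\|v\|_\infty \leq R$ and let $C_R$ be the constant from (F1). A preliminary reduction handles the zeroth-order dependence: replacing $(u,v)$ by $(e^{-\lambda t}u,\,e^{-\lambda t}v)$ for a suitable $\lambda$ when $C_R<0$, we may assume $F$ is nondecreasing in its $u$-slot, so that (F1) furnishes a genuine monotonicity. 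I would then add the vanishing time penalization $-\gamma/(T-t)$ to $u$; since this turns $u$ into a strict subsolution (the extra term $-\gamma/(T-t)^2<0$ strengthens \eqref{eq:4_F-solution}) and drives $u\to-\infty$ as $t\to T$, it suffices to reach a contradiction for each $\gamma>0$ and then let $\gamma\to 0$. The continuity of $u_0\in C(\closure{\Omega})$ at $t=0$ keeps any maximizing time away from $0$ as well.

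Next I would double the spatial variable. For parameters $\varepsilon,\sigma>0$ I set
\begin{equation*}
\Phi(x,y,t) := u(x,t) - v(y,t) - \frac{|x-y|^2}{2\varepsilon^2} - \sigma\bigl(\chi(x) + \chi(y)\bigr),
\end{equation*}
where $\chi$ is a smooth boundary-repelling function built from the signed distance to $\pOmega$; because $\pOmega$ is $C^{2,1}$, this distance is $C^2$ near $\pOmega$, so $\chi$ can be chosen $C^2$ with $\grad{\chi}=\nu_\Omega$ on $\pOmega$. By upper/lower semicontinuity and compactness of $\closure{\Omega}\times\closure{\Omega}\times[0,T]$ the supremum of $\Phi$ is attained at some $(\bar{x},\bar{y},\bar{t})$, and the standard penalization estimates yield $|\bar{x}-\bar{y}|^2/\varepsilon^2\to 0$ together with $\Phi(\bar{x},\bar{y},\bar{t})\to M$ as $\varepsilon\to 0$ (after $\sigma\to 0$).

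The crux is the boundary analysis. If $\bar{x}\in\pOmega$, the subsolution definition offers the alternative that $B_*\leq 0$ holds at $\bar{x}$ with test gradient $p=(\bar{x}-\bar{y})/\varepsilon^2 + \sigma\grad{\chi}(\bar{x})$. The perturbation contributes $\sigma\grad{\chi}(\bar{x})=\sigma\nu_\Omega(\bar{x})$, a strictly positive multiple of the outward normal; by the strict obliqueness (B1) this raises $B$ by a positive amount proportional to $\sigma$, while (B2) controls the residual dependence on $x$ and on $|p|$ through its Lipschitz bound. Calibrating $\sigma$ then forces $B_*>0$ at $\bar{x}$, contradicting the boundary alternative and thereby selecting the interior PDE inequality at $\bar{x}$; the symmetric argument using $+\sigma\nu_\Omega(\bar{y})$ and (B1) applied to $v$ forces the PDE alternative at $\bar{y}$. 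I expect this to be the main obstacle, since it requires the repelling perturbation to both defeat the boundary condition via (B1) and introduce only errors that vanish under (B2) as $\sigma,\varepsilon\to 0$.

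With both points reduced to the interior equation, I would apply the parabolic theorem on sums to produce a common $\tau\in\mathbb{R}$ and matrices $X,Y\in\mathbb{S}^d$ with $(\tau,p,X)\in\bar{\mathcal{P}}^{2,+}u(\bar{x},\bar{t})$ and $(\tau,q,Y)\in\bar{\mathcal{P}}^{2,-}v(\bar{y},\bar{t})$ satisfying exactly the matrix inequality \eqref{eq:condition_F2}, with gradients obeying $|p-q|\leq K\varepsilon(|p|\land|q|)$ and $|\bar{x}-\bar{y}|\leq K\eta\varepsilon$, so that the structure hypotheses of (F2) are met. Subtracting the two viscosity inequalities and using the monotonicity from (F1) to absorb the $u$-slot difference, I would bound the principal part $F_*(\bar{x},\ldots,X)-F^*(\bar{y},\ldots,Y)$ by the modulus $\omega_{R,K}$ of (F2), which tends to $0$ since $|\bar{x}-\bar{y}|(1+|p|\lor|q|)\to 0$ and $|\bar{x}-\bar{y}|^2/\varepsilon^2\to 0$. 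The degenerate case $p=\zerovec$ is dispatched by (F3), which forces continuity of $F$ at $(\zerovec,O)$ and activates the second lines of \eqref{eq:4_F-solution}--\eqref{eq:4_F-solution-super}. The outcome is an inequality of the form $0 < (\text{strict positive gain from the time penalization}) \leq \omega_{R,K}(o(1))$, impossible once $\varepsilon$ is small. Sending $\sigma\to 0$, then $\gamma\to 0$, and undoing the exponential rescaling yields $u\leq v$ on $\closure{\Omega}\times[0,T]$.
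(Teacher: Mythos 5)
The paper does not prove this theorem at all: it is recalled verbatim from Barles \cite{Ba} (Theorem 3.1 there), so the only meaningful comparison is with Barles' own argument. Your skeleton --- contradiction, exponential rescaling to handle (F1), time penalization $-\gamma/(T-t)$, doubling of variables, parabolic theorem on sums, then (F2)/(F3) in the interior --- is the standard and correct frame, and your handling of the interior case and of the degenerate case $p=\zerovec$ via (F3) is fine. The problem is the step you yourself flag as the crux.

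The boundary analysis with the test function $\frac{|x-y|^2}{2\varepsilon^2}+\sigma(\chi(x)+\chi(y))$ has a genuine gap. At a maximum with $\bar{x}\in\pOmega$, the test gradient for $u$ is $p=(\bar{x}-\bar{y})/\varepsilon^2+\sigma\nu_\Omega(\bar{x})$, and (B1) gains you only the \emph{fixed} amount $C_R\sigma$ over $B\bigl(\bar{x},t,u,(\bar{x}-\bar{y})/\varepsilon^2\bigr)$. For the pure Neumann condition $B=\innerproductSingle{p}{\nu_\Omega}$ this is enough, since the exterior-ball property of a $C^2$ boundary gives $\innerproductSingle{\bar{x}-\bar{y}}{\nu_\Omega(\bar{x})}\geq -C|\bar{x}-\bar{y}|^2$, so the unperturbed term is $\geq -C|\bar{x}-\bar{y}|^2/\varepsilon^2=-o(1)$ and $\sigma$ wins. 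But for a genuinely oblique $B$ --- in particular the capillary condition $B(x,p)=\innerproductSingle{p}{\nu_\Omega(x)}+\beta(x)|p|$ with $\beta\neq 0$, which is the case this paper needs --- the term $\beta(\bar{x})\,|(\bar{x}-\bar{y})/\varepsilon^2|$ has size $|\bar{x}-\bar{y}|/\varepsilon^2$, which is \emph{not} $o(1)$: for merely bounded semicontinuous $u,v$ one only knows $|\bar{x}-\bar{y}|^2/\varepsilon^2\to 0$, so $|p|$ may blow up as $\varepsilon\to 0$. Condition (B2) does not rescue this, because it controls the $x$-dependence by $(|p|+|q|)|x-y|\sim|\bar{x}-\bar{y}|^2/\varepsilon^2$ but cannot remove the unbounded dependence of $B$ on $|p|$ itself. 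Consequently, in the ``mixed'' case where only one of $\bar{x},\bar{y}$ lies on $\pOmega$ while the other point satisfies the PDE alternative, no calibration of $\sigma$ forces $B_*>0$, and your argument collapses. (When \emph{both} points take the boundary alternative, (B1) and (B2) combined do yield $0\geq 2C_R\sigma-o(1)$, a contradiction --- which is probably why the step looks plausible.) This failure is exactly why Barles does not use the naive quadratic-plus-repelling test function: his proof constructs a family of test functions adapted to $B$ so that $B(x,D_x\phi)>0$ and $B(y,-D_y\phi)<0$ hold \emph{by construction} at all boundary points; that construction consumes the $C^{2,1}$ regularity of $\pOmega$ (your sketch uses only $C^2$, so it cannot explain the hypothesis), and its more complicated Hessian is the reason (F2) is stated with the relaxed matrix inequality \eqref{eq:condition_F2} (note the extra $K\eta I_{2d}$ term) and the gradient condition $|p-q|\leq K\varepsilon(|p|\land|q|)$, which are pointless for the plain quadratic where the two gradients coincide up to the $\sigma$-terms. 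Without that construction, or an equivalent device, the proof does not go through.
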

{
    Under regularity assumption{s} on $\Omega$ and the contact angle function $\beta$, we confirm that the problem \eqref{eq:4_LevelSetEquationBdd} satisfies the comparison principle:
    \begin{thm}
        Suppose that $\Omega$ is uniformly $C^2$, thus $\nu_\Omega$ is uniformly $C^1$. 
        Assume that $\|\beta\|_\infty < 1$ and $\|\nabla_{\pOmega}\beta\|_\infty < \infty$.
        Then, the function $F$ and $B$ defined by \eqref{eq:intro_F_formula} and \eqref{eq:intro_B_formula} satisfy the hypotheses of \Theorem{thm:4_ComparisonPrinciple}.
        In particular, if $\Omega$ is $C^{2,1}$, then the comparison principle is available for the problem \eqref{eq:4_LevelSetEquationBdd}.
    \end{thm}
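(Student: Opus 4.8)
The plan is to verify, one by one, that the pair $(F,B)$ given by \eqref{eq:intro_F_formula} and \eqref{eq:intro_B_formula} satisfies conditions (F1)--(F3) and (B1)--(B2) of \Theorem{thm:4_ComparisonPrinciple}, after which that theorem applies verbatim once the boundary regularity $\Omega\in C^{2,1}$ required there is in force. The first observation is that $F(p,X)$ carries no dependence on $x$, $t$, or the value variable $u$, while $B(x,p)$ depends only on $x$ and $p$. Hence (F1) is immediate with $C_R=0$, its left-hand side being identically zero. For (F3), $F$ is a rational function of $p$ times a linear function of $X$, so it is smooth, hence continuous, on $(\mathbb{R}^d\backslash\{\zerovec\})\times\mathbb{S}^d$; to obtain continuity at $(\zerovec,O)$ I would introduce the orthogonal projection $A(p):=I_d-p\otimes p/|p|^2$ onto $p^\perp$, so that $F(p,X)=-\operatorname{tr}(A(p)X)=-\innerproductSingle{A(p)}{X}$ in the Hilbert--Schmidt pairing, and use that $A(p)^2=A(p)$ gives $\|A(p)\|_2^2=\operatorname{tr}A(p)=d-1$, together with Cauchy--Schwarz, to obtain $|F(p,X)|\le\sqrt{d-1}\,\|X\|_2$. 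This forces $F(p,X)\to0$ as $(p,X)\to(\zerovec,O)$, so $F^*(\zerovec,O)=F_*(\zerovec,O)=0$ and (F3) holds.

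The substantive step is (F2), and here I would run the standard structure-condition argument for geometric second-order operators, exploiting that $A(p)$ is idempotent and therefore equals its own symmetric square root. Writing $\operatorname{tr}(A(p)X)=\sum_{i=1}^d\innerproductSingle{XA(p)e_i}{A(p)e_i}$ and testing the matrix inequality \eqref{eq:condition_F2} against the vectors $(A(p)e_i,A(q)e_i)\in\mathbb{R}^{2d}$, then summing over $i$ and using $\operatorname{tr}(A(p)XA(p))=\operatorname{tr}(A(p)X)$, I would arrive at
\[
    F(q,Y)-F(p,X)=\operatorname{tr}(A(p)X)-\operatorname{tr}(A(q)Y)\le\frac{K\eta}{\varepsilon^2}\|A(p)-A(q)\|_2^2+2(d-1)K\eta.
\]
It remains to absorb the first term. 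Since $A(p)-A(q)=\hat q\otimes\hat q-\hat p\otimes\hat p$ with $\hat p=p/|p|$, one has $\|A(p)-A(q)\|_2\le2|\hat p-\hat q|\le 4|p-q|/(|p|\land|q|)$, and the standing constraint $|p-q|\le K\varepsilon(|p|\land|q|)$ then yields $\|A(p)-A(q)\|_2\le4K\varepsilon$, whence $\tfrac{K\eta}{\varepsilon^2}\|A(p)-A(q)\|_2^2\le16K^3\eta$. Therefore $F(q,Y)-F(p,X)\le C(d,K)\,\eta$, a quantity dominated by $C(d,K)$ times the argument $\eta+|x-y|(1+|p|\lor|q|)+|x-y|^2/\varepsilon^2$; taking $\omega_{R,K}$ linear gives (F2). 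I expect this to be the only genuinely non-routine verification: the absence of $x$-dependence in $F$ means no lower-order spatial terms intervene, and the entire difficulty is the gradient degeneracy, which the projection estimate controls precisely.

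The boundary conditions reduce to short computations that pin down the role of the two hypotheses. For (B1), using $|\normalConv_\Omega|=1$,
\[
    B(x,p+\lambda\normalConv_\Omega(x))-B(x,p)=\lambda+\beta(x)\bigl(|p+\lambda\normalConv_\Omega(x)|-|p|\bigr)\ge\lambda-\|\beta\|_\infty\lambda=(1-\|\beta\|_\infty)\lambda,
\]
where the inequality uses $\bigl||p+\lambda\normalConv_\Omega|-|p|\bigr|\le\lambda$; the assumption $\|\beta\|_\infty<1$ is exactly what makes $C_R:=1-\|\beta\|_\infty>0$. For (B2), I would split
\[
    B(x,p)-B(y,q)=\innerproductSingle{p-q}{\normalConv_\Omega(x)}+\innerproductSingle{q}{\normalConv_\Omega(x)-\normalConv_\Omega(y)}+\beta(x)(|p|-|q|)+(\beta(x)-\beta(y))|q|
\]
and bound the four terms by $|p-q|$, $L_\nu|q||x-y|$, $\|\beta\|_\infty|p-q|$, and $L_\beta|q||x-y|$ respectively, using $\bigl||p|-|q|\bigr|\le|p-q|$; this gives the claimed estimate with $C=\max\{1+\|\beta\|_\infty,\,L_\nu+L_\beta\}$. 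Here $L_\nu$ is the (finite) Lipschitz constant of $\normalConv_\Omega$, available because $\Omega$ is uniformly $C^2$ so that $\normalConv_\Omega$ is uniformly $C^1$, and $L_\beta=\|\gradbdd{\pOmega}{\beta}\|_\infty<\infty$ by hypothesis. With (F1)--(F3) and (B1)--(B2) established, \Theorem{thm:4_ComparisonPrinciple} yields the comparison principle for \eqref{eq:4_LevelSetEquationBdd} as soon as its $C^{2,1}$ requirement on $\Omega$ is met, which is the asserted ``in particular'' conclusion.
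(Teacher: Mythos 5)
Your verification is correct, and while the overall architecture (checking (F1)--(F3) and (B1)--(B2) one by one) necessarily coincides with the paper's, your treatment of (F2) and (F3) differs in a way worth recording. For (F3) the paper simply cites \cite[Lemma 1.6.16]{G}, whereas your bound $|F(p,X)|=|\innerproductSingle{A(p)}{X}|\leq\|A(p)\|_2\|X\|_2=\sqrt{d-1}\,\|X\|_2$ is a self-contained one-line substitute. For (F2) the paper tests \eqref{eq:condition_F2} against the vectors $(\bv e_i,\bv e_i)$ and $(p/|p|,q/|q|)$ separately and recombines the trace and the rank-one part, estimating $1-\innerproductSingle{p}{q}/|p||q|$ merely by the Schwarz inequality; this leaves a factor $\varepsilon^{-2}$ in the resulting modulus $\omega_{R,K}(s)=2K(d+\varepsilon^{-2})s$. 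You instead test against $(A(p)\bv e_i,A(q)\bv e_i)$, exploit idempotency of the projection to identify the summed left-hand side with $\operatorname{tr}(A(p)X)-\operatorname{tr}(A(q)Y)$, and --- crucially --- invoke the standing constraint $|p-q|\leq K\varepsilon(|p|\land|q|)$ to absorb $\tfrac{K\eta}{\varepsilon^2}\|A(p)-A(q)\|_2^2\leq 16K^3\eta$. This yields a modulus depending only on $d$ and $K$, which is what the structure condition in Barles' comparison theorem is actually meant to deliver; in this respect your argument is tighter than the paper's own. The verifications of (F1), (B1) and (B2) agree with the paper's essentially line by line (the same splitting of $B(x,p)-B(y,q)$ and the same constant $C_R=1-\|\beta\|_\infty$).
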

    \begin{proof}
        Since $F$ is independent of $u$, the condition (F1) is clearly fulfilled by setting $C_R=0$.
        To show the condition (F2), fix any $K,R,\eta,\varepsilon>0$ and let $X,Y\in\mathbb{S}^d$ be such that \eqref{eq:condition_F2}.
        Let $\{\bv e_1,\cdots,\bv e_d\}\subset\mathbb{R}^d$ be the standard basis of $\mathbb{R}^d$, i.e., for each $1\leq i\leq d$,
        the $j$-th element of $\bv e_i$ equals $\delta_{ij}$, where $\delta_{ij}$ denotes Kronecker's delta.
        Letting $\xi := (\bv e_i, \bv e_i)\in\mathbb{R}^{2d}$ and applying $\xi$ to \eqref{eq:condition_F2}, we have
        \begin{equation}\label{eq:proof_F2_1}
            -\frac{2k\eta}{\varepsilon^2}\leq -y_{ii} + x_{ii} \leq 2k\eta.
        \end{equation}
        Summing up \eqref{eq:proof_F2_1} through $1\leq i\leq d$, we obtain
        \begin{equation}\label{eq:proof_F2_2}
            -\frac{2K\eta d}{\varepsilon^2} \leq -\operatorname{tr}(Y) + \operatorname{tr}(X)\leq 2K\eta d.
        \end{equation}
        Meanwhile, letting $\xi := (\frac{p}{|p|},\frac{q}{|q|})\in\mathbb{R}^{2d}$ and applying $\xi$ to \eqref{eq:condition_F2} yield
        \begin{equation}\label{eq:proof_F2_3}
            -\frac{2K\eta}{\varepsilon^2}\leq\operatorname{tr}\left(\frac{p\otimes p}{|p|^2}X\right) - \operatorname{tr}\left(\frac{q\otimes q}{|q|^2}Y\right)
            \leq \frac{2K\eta}{\varepsilon^2}\left(1-\frac{\innerproductSingle{p}{q}}{|p||q|}\right) + 2K\eta.
        \end{equation}
        Combining \eqref{eq:proof_F2_2} and \eqref{eq:proof_F2_3} together with the Schwarz inequality gives
        \begin{equation*}
            -2K\left(1 + \frac{d+2}{\varepsilon^2}\right)\eta\leq F(q,Y) - F(p,X) \leq 2K\left(d + \frac{1}{\varepsilon^2}\right)\eta.
        \end{equation*}
        Thus, we define $\omega_{R,K}(s) := 2K\left(d+\frac{1}{\varepsilon^2}\right)s$ for each $s\in[0,\infty)$ and observe that
        $\lim_{s\downarrow 0}\omega_{R,K}(s) = 0$ and
        \begin{equation*}
            F(q,Y) - F(p,X) \leq \omega_{R,K}(\eta)\leq\omega_{R,K}\left(\eta + |x-y|(1+|p|\lor|q|) + \frac{|x-y|^2}{\varepsilon^2}\right).
        \end{equation*}
        Hence, $F$ satisfies (F2). For (F3), it is known that $F_*(\zerovec,O) = F^*(\zerovec,O) = 0$ by \cite[Lemma 1.6.16]{G}.
        Let us check that $B$ satisfies (B1). We compute
        \begin{align*}
            B(u,p+\lambda\nu_\Omega(x)) - B(u,p) 
            &= \innerproductSingle{p+\lambda\nu_\Omega(x)}{\nu_\Omega(x)} + \beta(x)|p + \lambda\nu_\Omega(x)| - \innerproductSingle{p}{\nu_\Omega(x)} - \beta(x)|p|\\
            &= \lambda + \beta(x)(|p+\lambda\nu_\Omega(x)| - |p|) \geq \lambda - \|\beta\|_\infty\left| |p + \lambda\nu_\Omega(x)| - |p| \right|\\
            &\geq \lambda - \|\beta\|_\infty|p + \lambda\nu_\Omega(x) - p| = (1-\|\beta\|_\infty)\lambda.
        \end{align*}
        Thus, letting $C_R := 1-\|\beta\|_\infty >0$, we have (B1).
        For (B2), we compute
        \begin{align}\label{eq:proof_B2_1}
            |B(x,p) - B(y,q)| &= |\innerproductSingle{p}{\nu_\Omega(x)} + \beta(x)|p| - \innerproductSingle{q}{\nu_\Omega(y)} -\beta(y)|q||\nonumber\\
            &\leq |\innerproductSingle{p}{\nu_\Omega(x)} - \innerproductSingle{q}{\nu_\Omega(y)}| + |\beta(x)|p| - \beta(y)|q||.
        \end{align}
        The first term of the right-hand side of \eqref{eq:proof_B2_1} can be estimated as follows.
        \begin{align}\label{eq:proof_B2_2}
            &|\innerproductSingle{p}{\nu_\Omega(x)} - \innerproductSingle{q}{\nu_\Omega(y)}| = |\innerproductSingle{p-q}{\nu_\Omega(x)} + \innerproductSingle{q}{\nu_\Omega(x)- \nu_\Omega(y)}|\nonumber\\
            &\leq |p-q| + |q| \|\nabla_{\pOmega}\nu_\Omega\|_\infty|x-y| \leq |p-q| + \|\nabla_{\pOmega}\nu_\Omega\|_\infty(|p| + |q|)|x-y|.
        \end{align}
        Whereas, for the second term of the right-hand side of \eqref{eq:proof_B2_1}, we compute
        \begin{align}\label{eq:proof_B2_3}
            &|\beta(x)|p| - \beta(y)|q|| \leq |\beta(x) - \beta(y)||p| + |\beta(y)|p| - \beta(y)|q||\nonumber\\
            &\leq \|\nabla_{\pOmega}\beta\|_\infty|p||x-y| + \|\beta\|_\infty||p| - |q|| \leq \|\nabla_{\pOmega}\beta\|_\infty(|p| + |q|)|x-y| + |p-q|.
        \end{align}
        Combining \eqref{eq:proof_B2_1}, \eqref{eq:proof_B2_2} and \eqref{eq:proof_B2_3} and setting $C := \max\{2,\|\nabla_{\pOmega}\beta\|_\infty,\|\nabla_{\pOmega}\nu_\Omega\|_\infty\} > 0$,
        we derive the desired inequality.
    \end{proof}
}



\section{Capillary Chambolle type scheme}\label{sec:4_CapillaryChambolleTypeScheme}
In this section, we will explore some properties of a minimizer of the energy functional $\chambolleEnergyFour{u}$ defined by
\begin{equation}\label{eq:4_chambolleEnergy}
    \chambolleEnergyFour{u} := C_{\beta}(u) + \frac{1}{2h}\int_{\Omega}(u - g)^2\,\dL{d},
\end{equation}
where $g\in L^2(\Omega)$ is a given data.


The following lemma asserts the monotonicity of the minimizer of $\chambolleEnergyFour{u}$
with respect to the data $g$:

\begin{lem}\label{lem:Monotonicity}
    Let $w_f$ denote the unique minimizer of \eqref{eq:4_chambolleEnergy}.
    Let $f,g\in L^2(\Omega)$ and suppose that $f\leq g$ holds a.e. in $\Omega$. Then, $w_f\leq w_g$ holds a.e. in $\Omega$.
\end{lem}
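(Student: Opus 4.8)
The plan is to run the standard lattice (truncation) argument for minimizers of a convex functional: test the minimality of $w_f$ against the competitor $w_f\wedge w_g$ and the minimality of $w_g$ against $w_f\vee w_g$, then close the argument using strict convexity of the fidelity term. Throughout write $J_f(u) := C_\beta(u) + \frac{1}{2h}\int_\Omega(u-f)^2\dL{d}$, so that $w_f$ is the unique minimizer of $J_f$ and $w_g$ of $J_g$. Since $w_f,w_g\in L^2(\Omega)\cap BV(\Omega)$ and $BV\cap L^2$ is a lattice, the competitors $w_f\wedge w_g$ and $w_f\vee w_g$ are admissible.

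The first ingredient I would establish is the submodularity of $C_\beta$, namely
\[
    C_\beta(w_f\wedge w_g) + C_\beta(w_f\vee w_g) \leq C_\beta(w_f) + C_\beta(w_g).
\]
For the total variation part this follows from the coarea formula together with the submodularity of perimeter, $P(E\cup G)+P(E\cap G)\leq P(E)+P(G)$, applied to the superlevel sets via $\{w_f\vee w_g>s\}=\{w_f>s\}\cup\{w_g>s\}$ and $\{w_f\wedge w_g>s\}=\{w_f>s\}\cap\{w_g>s\}$. For the boundary term one gets equality rather than inequality: because the trace operator commutes with $\vee$ and $\wedge$, one has $\trace{(w_f\vee w_g)}+\trace{(w_f\wedge w_g)}=\trace{w_f}+\trace{w_g}$ on $\pOmega$, so $\int_{\pOmega}\beta(\cdots)\dH{d-1}$ contributes the same amount on both sides.

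Next I would add the two minimality inequalities $J_f(w_f)\leq J_f(w_f\wedge w_g)$ and $J_g(w_g)\leq J_g(w_f\vee w_g)$ and compare with the reverse estimate coming from submodularity. On the set $A:=\{w_f>w_g\}$ the operations $\wedge,\vee$ simply swap the two functions, while on $A^{c}$ the fidelity integrands cancel; a direct expansion of the squares shows that the total fidelity of the two competitors is smaller than that of the two minimizers by exactly $\frac{1}{h}\int_A (w_f-w_g)(g-f)\dL{d}$, which is $\geq 0$ since $w_f-w_g>0$ on $A$ and $g-f\geq 0$ by hypothesis. Combining this with the submodularity inequality yields $J_f(w_f\wedge w_g)+J_g(w_f\vee w_g)\leq J_f(w_f)+J_g(w_g)$, so the minimality chain collapses to equalities; in particular $J_f(w_f\wedge w_g)=J_f(w_f)$. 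Strict convexity of the fidelity term makes the minimizer of $J_f$ unique, hence $w_f\wedge w_g=w_f$ a.e., i.e. $w_f\leq w_g$ a.e., as claimed.

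The only genuinely technical step is the submodularity of $C_\beta$, and within it the handling of the boundary term and the admissibility (BV-lattice) of the competitors; the remaining fidelity computation is a routine completion of squares, and the final conclusion is immediate from uniqueness of the minimizer, which the paper has already recorded from strict convexity of $\chambolleEnergyFour{u}$.
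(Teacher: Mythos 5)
Your proposal is correct and follows essentially the same lattice/truncation argument as the paper: test minimality against $w_f\wedge w_g$ and $w_f\vee w_g$, use submodularity of the total variation plus exact additivity of the boundary trace term, and reduce to the sign of $\int_{\{w_f>w_g\}}(w_f-w_g)(g-f)\dL{d}$. The only (harmless) difference is in the closing step, where you invoke equality throughout the minimality chain and uniqueness of the minimizer of $J_f$, while the paper concludes directly that $\{w_f>w_g\}$ is null; your variant is if anything slightly more careful.
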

\begin{proof}
    Though the proof is quite similar to that of \cite{Chambolle2004}, we include it for the reader's convenience. 
    Our present purpose is to show {that} the set $\{w_f > w_g\}$ has zero $d$-dimensional Lebesgue measure. Since $w_f$ and $w_g$ are respectively 
    minimizers of \eqref{eq:4_chambolleEnergy} for the data $f$ and $g$, we have

    \begin{eqnarray}
        C_\beta(w_f) + \frac{1}{2h}\int_\Omega(w_f - f)^2\,\dL{d} &\leq& C_\beta(w_f\land w_g) + \frac{1}{2h}\int_\Omega(w_f\land w_g - f)^2\,\dL{d}, \\
        C_\beta(w_g) + \frac{1}{2h}\int_\Omega(w_g - g)^2\,\dL{d} &\leq& C_\beta(w_f\lor w_g) + \frac{1}{2h}\int_\Omega(w_f\lor w_g - g)^2\,\dL{d}.
    \end{eqnarray}
    We now recall a well-known inequality:
    \begin{equation}
        \int_\Omega|\nabla(u\land v)| + \int_\Omega|\nabla(u\lor v)| \leq \int_\Omega|\nabla u| + \int_\Omega|\nabla v|.
    \end{equation}
    Moreover, it is easily observed that
    \begin{equation}
        \int_{\partial\Omega}\beta\gamma(u\land v)\dH{d-1} + \int_{\partial\Omega}\beta\gamma(u\lor v)\dH{d-1} = \int_{\partial\Omega}\beta\gamma u\dH{d-1} + \int_{\partial\Omega}\beta\gamma v\dH{d-1}.
    \end{equation}
    Thus, we obtain
    \begin{align}\label{eq:4_MotonicityInequality}
        \begin{split}
            &\int_\Omega (w_f - f)^2\,\dL{d} \leq \int_\Omega (w_f\land w_g - f)^2\,\dL{d},\\
            &\int_\Omega (w_g - g)^2\,\dL{d} \leq \int_\Omega (w_f\lor w_g - g)^2\,\dL{d}.
        \end{split}
    \end{align}
    Splitting $\Omega$ into $\{w_f\leq w_g\}$ and $\{w_f > w_g\}$ and summing up $\eqref{eq:4_MotonicityInequality}$ yield
    \begin{align*}
        &\int_{\{w_f > w_g\}} (w_f - f)^2\,\dL{d}\leq \int_{\{w_f > w_g\}}(w_g - f)^2\,\dL{d},\\
        &\int_{\{w_f > w_g\}} (w_g - g)^2\,\dL{d}\leq \int_{\{w_f > w_g\}}(w_f - g)^2\,\dL{d}{.}
    \end{align*}
    {Adding two inequalities yield}
    \begin{equation}
        \int_{\{w_f > w_g\}}(w_f-w_g)(g-f)\,\dL{d}\leq 0.
    \end{equation}
    Since $f\leq g$ a.e. in $\Omega$, the integral domain $\{w_f > w_g\}$ should have zero $d$ dimensional measure.
\end{proof}

{We next prove that the map $L^2(\Omega)\ni g\mapsto\weaksol{h}{g}\in L^2(\Omega)$ is Lipschitz continuous:}

\begin{prop}\label{prop:Lipschitz_Continuity}
    For every $h>0$ and $f, g\in L^2(\Omega)$, it holds that
    \begin{equation*}
        \|\weaksol{h}{f} - \weaksol{h}{g}\|_2\leq \|f -g\|_2.
    \end{equation*}
\end{prop}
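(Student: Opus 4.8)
The plan is to recognize $\weaksol{h}{g}$ as the proximal map (resolvent) of the convex functional $hC_\beta$ evaluated at $g$ and to exploit the firm nonexpansiveness of such maps, which is the standard resolvent estimate for subdifferentials of convex functions. Concretely, I would first record the first-order optimality condition for the strictly convex problem $\eqref{eq:4_chambolleEnergy}$. Since the quadratic penalization $u\mapsto\frac{1}{2h}\int_\Omega(u-g)^2\dL{d}$ is everywhere finite and Fr\'echet differentiable on $L^2(\Omega)$, with derivative the element $\frac{1}{h}(u-g)$, the sum rule for subdifferentials applies and the optimality of $\weaksol{h}{g}$ reads
\begin{equation*}
    0\in\subdiff{C_\beta}{\weaksol{h}{g}} + \frac{1}{h}(\weaksol{h}{g} - g),\qquad\mbox{equivalently}\qquad \frac{g - \weaksol{h}{g}}{h}\in\subdiff{C_\beta}{\weaksol{h}{g}}.
\end{equation*}
Writing the analogous inclusion for $f$, I would then invoke the monotonicity of the subdifferential of the convex functional $C_\beta$ applied to the two points $\weaksol{h}{f}$ and $\weaksol{h}{g}$.

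Monotonicity yields
\begin{equation*}
    \innerproductSingle{\frac{f - \weaksol{h}{f}}{h} - \frac{g - \weaksol{h}{g}}{h}}{\weaksol{h}{f} - \weaksol{h}{g}}\geq 0,
\end{equation*}
and after multiplying by $h>0$ and rearranging the inner product this becomes the firm-nonexpansiveness estimate
\begin{equation*}
    \innerproductSingle{f - g}{\weaksol{h}{f} - \weaksol{h}{g}}\geq \|\weaksol{h}{f} - \weaksol{h}{g}\|_2^2.
\end{equation*}
Applying the Cauchy--Schwarz inequality to the left-hand side gives $\|f-g\|_2\,\|\weaksol{h}{f}-\weaksol{h}{g}\|_2\geq\|\weaksol{h}{f}-\weaksol{h}{g}\|_2^2$, and dividing by $\|\weaksol{h}{f}-\weaksol{h}{g}\|_2$ (the estimate being trivial when this quantity vanishes) yields the claimed inequality $\|\weaksol{h}{f}-\weaksol{h}{g}\|_2\leq\|f-g\|_2$.

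Because the argument is a routine resolvent estimate, the only point requiring genuine care is the justification of the optimality condition, since $C_\beta$ takes the value $+\infty$ outside $L^2(\Omega)\cap BV(\Omega)$ and the subdifferential sum rule must be applied to an extended-real-valued functional. I would make the proof self-contained by deriving, in the spirit of \Lemma{lem:Monotonicity}, the equivalent variational inequality $\frac{1}{h}\innerproductSingle{\weaksol{h}{g}-g}{v-\weaksol{h}{g}} + C_\beta(v) - C_\beta(\weaksol{h}{g})\geq 0$ for all $v\in L^2(\Omega)$: the inequality is vacuous when $C_\beta(v)=+\infty$, and for finite $C_\beta(v)$ it follows from the minimality of $\weaksol{h}{g}$ along the segment $\weaksol{h}{g}+t(v-\weaksol{h}{g})$, the convexity of $C_\beta$, and letting $t\downarrow 0$. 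Testing the $f$-inequality with $v=\weaksol{h}{g}$ and the $g$-inequality with $v=\weaksol{h}{f}$ and adding causes the $C_\beta$ terms to cancel, reproducing the firm-nonexpansiveness estimate above without any appeal to abstract subdifferential calculus; this avoids the main technical obstacle entirely while keeping the proof elementary.
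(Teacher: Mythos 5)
Your proposal is correct and follows essentially the same route as the paper: both identify $\weaksol{h}{g}$ via the inclusion $(g-\weaksol{h}{g})/h\in\subdiff{C_\beta}{\weaksol{h}{g}}$, combine the two subgradient inequalities (i.e.\ monotonicity of the subdifferential) to obtain the firm-nonexpansiveness estimate, and conclude by Cauchy--Schwarz. Your extra care in justifying the optimality condition through the elementary variational inequality is a welcome refinement, but it does not change the argument.
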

\begin{proof}
    Set $p_f := -(\weaksol{h}{f} - f) / h$ and $p_g := -(\weaksol{h}{g} - g) / h$. 
    Then, we see that $p_f\in\partial{C_\beta}(\weaksol{h}{f})$ and $p_g\in\partial{C_\beta}(\weaksol{h}{g})$.
    Hence, we get
    \begin{align}\label{eq:4_Lipschitz_Continuity_1}
        \begin{split}
            &{C_\beta}(\weaksol{h}{g}) \geq {\int_\Omega{p_f}{(\weaksol{h}{g} - \weaksol{h}{f})}\,\dL{d}} + {C_\beta}(\weaksol{h}{f}),\\
            &{C_\beta}(\weaksol{h}{f}) \geq {\int_\Omega{p_g}{(\weaksol{h}{f} - \weaksol{h}{g})}\,\dL{d}} + {C_\beta}(\weaksol{h}{g}).
        \end{split}
    \end{align}
    Summing up both sides of $\eqref{eq:4_Lipschitz_Continuity_1}$ gives
    \begin{equation}
        0\leq {\int_\Omega(p_f - p_g)(\weaksol{h}{f} - \weaksol{h}{g})\,\dL{d}} = {\int_\Omega\left(\frac{f-g-\weaksol{h}{f} + \weaksol{h}{g}}{h}\right)(\weaksol{h}{f} - \weaksol{h}{g})\,\dL{d}}.
    \end{equation}
    Thus, we obtain from Cauchy-Schwarz' inequality that
    \begin{equation*}
        \|\weaksol{h}{f} - \weaksol{h}{g}\|^2_2 \leq \|f-g\|_2\|\weaksol{h}{f} - \weaksol{h}{g}\|_2.
    \end{equation*}
    The proof is now complete.
\end{proof}

In the sequel, we investigate conditions on the domain $\Omega$ and on the contact angle function $\beta$
to ensure that a solution $\weaksol{h}{g}$ to \eqref{eq:4_Neumann_problem_master} is
equi-continuous with respect to $h>0$:
\begin{equation}\label{eq:4_Neumann_problem_master}
    \left\{
        \begin{array}{rcl}
            w + h\operatorname{div}\grad{\phi(\grad{w})} &=& g\ \ \mbox{in}\ \ \Omega,\\
            B({\cdot},\grad{w})&=& 0\ \ \mbox{on}\ \ \pOmega,
        \end{array}
    \right.
\end{equation}
where {$\anisotropy{}{}{p} := |p|$, and} $B(x,p) := \innerproductSingle{p}{\normalConv_\Omega{(x)}} + \beta(x)|p|$ {for $x\in\pOmega$ and $p\in\mathbb{R}^d$}.
Precisely speaking, we obtain the following result:

\begin{thm}\label{thm:4_grad_bound_of_weaksol}
    Suppose that $\Omega$ is a convex domain and $\beta\in C^1(\pOmega)$ with $\|\beta\|_\infty < 1$.
    Suppose that $w$ is a solution to \eqref{eq:4_Neumann_problem_master}.
    Assume that $\gradbdd{\pOmega}{\beta}(x)$ is orthogonal to the kernel of the Weingarten map {$\nabla_{\pOmega}\nu_\Omega(x)$} at each $x\in\pOmega$.
    Assume that $|\gradbdd{\pOmega}{\beta}(x)|\leq k(x)$ where $k(x)$ is the minimal {nonnegative} {principal (inward)} curvature of $\pOmega$ at $x\in\pOmega$.
    Then, it holds that 
    \begin{equation*}
        \|\grad{w}\|_\infty\leq \|\grad{g}\|_\infty.
    \end{equation*}
    In other words, if the gradient of $g$ is bounded, then 
    $\weaksol{h}{g}$ is equi-continuous with respect to $h>0$.
\end{thm}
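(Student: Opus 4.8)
The plan is to run Bernstein's method on the quasilinear elliptic equation in \eqref{eq:4_Neumann_problem_master}, estimating the scalar field $v:=\tfrac12|\nabla w|^2$ and comparing it with the constant $M:=\tfrac12\norm{\nabla g}_\infty^2$. Since $\phi(p)=|p|$ makes $\operatorname{div}(\nabla\phi(\nabla w))$ singular at $\nabla w=0$, I would first replace $\phi$ by the smooth regularization $\phi_\varepsilon(p)=\sqrt{\varepsilon^2+|p|^2}$, for which $a_{ij}(p):=\partial^2_{p_ip_j}\phi_\varepsilon(p)$ is smooth, bounded and positive definite and the solution is classically $C^2$ up to $\pOmega$; I then derive the bound with constants independent of $\varepsilon$ and let $\varepsilon\to0$. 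Expanding $\operatorname{div}(\nabla\phi_\varepsilon(\nabla w))=\sum_{i,j}a_{ij}(\nabla w)w_{ij}$ with $a=(a_{ij})\ge0$ and treating the curvature term as the coercive part, I write the equation as $w-h\sum_{i,j}a_{ij}(\nabla w)w_{ij}=g$. The two structural identities $a(p)p=0$ and $\sum_l a_{ij,p_l}(p)p_l=-a_{ij}(p)$, valid by the $1$-homogeneity of $\phi$, are what drive the computation.

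\textbf{Interior differential inequality.} Differentiating the equation in $x_k$, multiplying by $w_k$ and summing, and using $v_l=\sum_k w_kw_{kl}$ together with $\sum_k w_k w_{kij}=v_{ij}-\sum_k w_{ki}w_{kj}$, the third-order terms reorganize into first-order terms in $v$. With the linear elliptic operator
\[
  Lv:=h\sum_{i,j}a_{ij}(\nabla w)\,v_{ij}+h\sum_{i,j,l}a_{ij,p_l}(\nabla w)\,w_{ij}\,v_l,
\]
one obtains the identity $Lv=2v+h\sum_{i,j,k}a_{ij}w_{ki}w_{kj}-\langle\nabla g,\nabla w\rangle$. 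Since $a\ge0$ the Hessian-quadratic term is nonnegative, and by Cauchy--Schwarz $\langle\nabla g,\nabla w\rangle\le\norm{\nabla g}_\infty\sqrt{2v}$, whence $Lv\ge\sqrt{2v}\,(\sqrt{2v}-\norm{\nabla g}_\infty)$, which is strictly positive on $\{v>M\}$. Thus $v$ is a strict subsolution of $L$ on $\{v>M\}$ and cannot attain an interior maximum there; equivalently, at any interior maximum $\nabla v=0$ and $\nabla^2 v\le0$ force $Lv\le0$, hence $2v\le\langle\nabla g,\nabla w\rangle\le\norm{\nabla g}_\infty\sqrt{2v}$, i.e. $|\nabla w|\le\norm{\nabla g}_\infty$. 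The constant $M$ is a supersolution of the same equation.

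\textbf{Boundary oblique-derivative inequality (the crux).} It remains to exclude a maximum with $v(x_0)>M$ at $x_0\in\pOmega$, and this is where the capillary condition $\langle\nabla w,\nu_\Omega\rangle+\beta|\nabla w|=0$ and the geometry of $\Omega$ enter. Decomposing $\nabla w=(\nabla w\cdot\nu_\Omega)\nu_\Omega+\nabla_T w$, the boundary relation gives $\nabla w\cdot\nu_\Omega=-\beta|\nabla w|$ and hence $|\nabla_T w|^2=(1-\beta^2)|\nabla w|^2$; the hypothesis $\norm{\beta}_\infty<1$ guarantees that the tangential component is a definite fraction of the full gradient, so that the curvature term below is effective. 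Differentiating $\langle\nabla w,\nu_\Omega\rangle+\beta|\nabla w|=0$ along a tangential frame of $\pOmega$ at $x_0$, and using $\partial_{\tau}\nu_\Omega=W\tau$ for the Weingarten map $W=\nabla_{\partial\Omega}\nu_\Omega$, produces an identity expressing the outward normal derivative $\partial_{\nu_\Omega}v(x_0)$ in terms of the tangential Hessian of $w$, the second fundamental form through $\langle\nabla_T w,W\nabla_T w\rangle$, and $\nabla_{\partial\Omega}\beta$. Convexity of $\Omega$ gives $W\ge0$; the assumption $\nabla_{\partial\Omega}\beta(x_0)\perp\ker W(x_0)$ together with $|\nabla_{\partial\Omega}\beta(x_0)|\le k(x_0)$ and the bound $\langle\xi,W\xi\rangle\ge k|\xi|^2$ for $\xi\perp\ker W$ are precisely what make the nonnegative curvature contribution dominate the indefinite $\nabla_{\partial\Omega}\beta$ contribution, yielding an oblique-derivative inequality for $v$ of the sign making $v$ a subsolution, while $M$ (with $\nabla M=0$) is a supersolution, of the boundary operator. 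I expect this boundary computation to be the principal obstacle: tying $\partial_{\nu_\Omega}v$ to $W$ and $\nabla_{\partial\Omega}\beta$ requires careful bookkeeping of the tangential/normal splitting, and the degeneracy at $\nabla w=0$ must be controlled uniformly in $\varepsilon$.

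\textbf{Conclusion.} With the interior inequality and the boundary oblique-derivative inequality in hand, $v$ is a subsolution and the constant $M=\tfrac12\norm{\nabla g}_\infty^2$ a supersolution of one and the same elliptic oblique-derivative problem, so the comparison principle \Lemma{lem:4_comparison_principle} gives $v\le M$ on $\closure{\Omega}$, i.e. $\norm{\nabla w}_\infty\le\norm{\nabla g}_\infty$, with a bound independent of $\varepsilon$ and of $h$. Letting $\varepsilon\to0$ proves the estimate, and since the bound does not depend on $h$, taking $g=\geodis{E}$ (which is $1$-Lipschitz) yields the asserted equicontinuity of $\weaksol{h}{g}$ in $h$. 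Alternatively the boundary step may be phrased as a Hopf-lemma contradiction: a boundary maximum with $v(x_0)>M$ would force $\partial_{\nu_\Omega}v(x_0)>0$, contradicting the oblique-derivative inequality just derived.
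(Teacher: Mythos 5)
Your proposal is correct and follows essentially the same route as the paper: Bernstein's method applied to $\tfrac12|\nabla w|^2$, the interior differential inequality obtained by differentiating the equation and discarding the nonnegative Hessian-quadratic term, the boundary oblique-derivative inequality derived by differentiating $B(x,\nabla w(x))=0$ and playing the Weingarten map (convexity) against $\nabla_{\pOmega}\beta$ exactly as in the paper's Lemmas~\ref{lem:4_sufficient_cond_boundary} and~\ref{lem:4_sufficient_cond_boundary_2}, and finally the comparison principle of Lemma~\ref{lem:4_comparison_principle} with the constant $\tfrac12\|\nabla g\|_\infty^2$ as supersolution. The only genuine additions are your preliminary regularization $\phi_\varepsilon(p)=\sqrt{\varepsilon^2+|p|^2}$ (the paper instead works formally with the degenerate coefficients, assuming $w$ smooth up to the boundary) and the optional Hopf-lemma phrasing of the boundary step; neither changes the substance of the argument.
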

{
\begin{rem}
    Our assumption on $\beta$ implies that $\beta$ must be a constant function if $\pOmega$ is flat.
    Precisely speaking, if $\pOmega$ is flat in the direction of $x_i$-axis, then $\beta$ must be independent of $x_i$.
\end{rem}
}
{
    \begin{rem}\label{rem:4_equi-continuity-g}
        We note that the function $w^h_g$ turns out to be equi-continuous with respect to $g$
        in the case when $g$ is the signed geodesic distance function.
    \end{rem}
}
To prove Theorem \ref{thm:4_grad_bound_of_weaksol}, we need several lemmas.
\begin{lem}\label{lem:4_comparison_principle}
    Let $L$ be a degenerate elliptic differential operator of the form:
    \begin{equation*}
        L := \sum_{i{,j}=1}^d a_{ij}\frac{\partial^2}{\partial x_i\partial x_j} + \sum_{\ell=1}^d b_\ell\frac{\partial}{\partial x_\ell}
    \end{equation*}
    with a non-positive definite symmetric matrix $(a_{ij})_{{1\leq i,j\leq d}}$ and $b_\ell$, where $(a_{ij})^{1/2}$ is Lipschitz 
    and $b_\ell$'s are uniformly continuous in $\closure{\Omega}$ where $\Omega$ is a domain in $\mathbb{R}^d$.
    Assume that $\partial\Omega$ is uniformly $C^1$.
    Let {$\xi$} be a bounded $C^1$ vector field on $\partial\Omega$ such that $\inf_{\partial\Omega}{\innerproductSingle{{\xi}}{\normalConv_\Omega}}>0$ where $\normalConv_\Omega$ is the exterior unit normal vector field of $\partial\Omega$.
    If $v\in C^2(\Omega)\cap C^1(\closure{\Omega})$ satisfies
    \begin{equation}\label{eq:4_strong_maximum_principle_pde}
    \left\{
    \begin{array}{rlcc}
    	v + Lv &\hspace{-7pt}\leq \lambda &\text{in}& \Omega{,} \\
    	\innerproductSingle{{\xi}}{\grad{v}} &\hspace{-7pt}\leq 0 &\text{on}& \partial\Omega
    \end{array}
    \right.
    \end{equation}
    for some constant $\lambda${,}
    {then} it holds that $v\leq\lambda$ in $\Omega$.
\end{lem}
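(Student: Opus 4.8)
The plan is to argue by contradiction through a maximum-principle argument, exploiting the obliqueness $\delta := \inf_{\pOmega}\innerproductSingle{\xi}{\normalConv_\Omega} > 0$ together with a perturbation that forces the maximum into the interior, where the degenerate ellipticity can be used. Assuming $\Omega$ bounded (as in the application), $v\in C^1(\closure{\Omega})$ attains its maximum $M := \max_{\closure{\Omega}} v$ over the compact set $\closure{\Omega}$; suppose for contradiction that $M > \lambda$.

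First I would record the interior mechanism. At any interior point $x$ where $v$ has a local maximum one has $\grad{v}(x) = \zerovec$ and $\hess{v}(x)\leq O$; since $(a_{ij})$ is negative semidefinite, $\operatorname{tr}\bigl((a_{ij})\hess{v}(x)\bigr) = \operatorname{tr}\bigl((-(a_{ij}))(-\hess{v}(x))\bigr)\geq 0$ as a trace of a product of two positive semidefinite matrices, so $Lv(x)\geq 0$ and the equation forces $v(x)\leq\lambda$. Thus a genuinely interior maximum already yields the contradiction. The difficulty is that the maximum may lie on $\pOmega$, where $v$ is only $C^1$ and $Lv$ cannot be evaluated; moreover the degeneracy of $L$ precludes a Hopf-lemma argument.

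To handle the boundary, I would construct a barrier $\psi\in C^2(\closure{\Omega})$ with $\innerproductSingle{\xi}{\grad{\psi}} \leq -\delta < 0$ on $\pOmega$: take $\psi$ equal to the distance to $\pOmega$ near the boundary, so that $\grad{\psi} = -\normalConv_\Omega$ there and $\innerproductSingle{\xi}{\grad{\psi}} = -\innerproductSingle{\xi}{\normalConv_\Omega}\leq -\delta$, extended smoothly into $\Omega$. For $\epsilon>0$ set $v_\epsilon := v + \epsilon\psi$. At a boundary maximum $y\in\pOmega$ of $v_\epsilon$, the tangential gradient vanishes (since $\pOmega$ is a $C^1$ hypersurface without boundary) and the inward derivative is nonpositive, so $\grad{v_\epsilon}(y) = c\,\normalConv_\Omega(y)$ with $c\geq 0$, whence $\innerproductSingle{\xi}{\grad{v_\epsilon}}(y) = c\innerproductSingle{\xi}{\normalConv_\Omega}(y)\geq 0$. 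On the other hand the boundary inequality gives $\innerproductSingle{\xi}{\grad{v_\epsilon}} = \innerproductSingle{\xi}{\grad{v}} + \epsilon\innerproductSingle{\xi}{\grad{\psi}}\leq -\epsilon\delta < 0$ on $\pOmega$, a contradiction. Hence $v_\epsilon$ attains its maximum at some interior point $x_\epsilon\in\Omega$.

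At $x_\epsilon$ I would translate the optimality of $v_\epsilon$ back to $v$: $\grad{v}(x_\epsilon) = -\epsilon\grad{\psi}(x_\epsilon)$ and $\hess{v}(x_\epsilon)\leq -\epsilon\hess{\psi}(x_\epsilon)$. Since $(a_{ij})\leq O$ is bounded and $\psi\in C^2(\closure{\Omega})$, the second-order term $\operatorname{tr}\bigl((a_{ij})\hess{v}(x_\epsilon)\bigr)$ and the first-order term $\sum_{\ell}b_\ell\partial_\ell v(x_\epsilon)$ are both bounded below by $-C\epsilon$, so $Lv(x_\epsilon)\geq -C\epsilon$ and the interior equation yields $v(x_\epsilon)\leq\lambda + C\epsilon$. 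Comparing $v_\epsilon(x_\epsilon) = \max v_\epsilon\geq M - \epsilon\|\psi\|_\infty$ with $v_\epsilon(x_\epsilon) = v(x_\epsilon) + \epsilon\psi(x_\epsilon)\leq\lambda + C\epsilon + \epsilon\|\psi\|_\infty$ and letting $\epsilon\to 0$ gives $M\leq\lambda$, contradicting $M>\lambda$. The main obstacle is precisely this boundary case: because $L$ is only degenerate elliptic one cannot invoke Hopf's lemma, and the perturbation $\epsilon\psi$ — whose sign along $\xi$ is dictated by the obliqueness — is what simultaneously rules out a boundary maximum and introduces only controllable errors in the interior estimate.
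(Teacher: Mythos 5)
Your argument is correct in substance, but it is a genuinely different route from the paper's: the paper disposes of this lemma in one line by observing that $v\equiv\lambda$ is an exact solution of \eqref{eq:4_strong_maximum_principle_pde} with equalities and invoking the classical comparison principle for linear degenerate elliptic equations with an oblique boundary condition (citing Protter--Weinberger), whereas you reprove that comparison principle from scratch. What you write is essentially the standard proof of the cited result: the interior maximum is handled by $\operatorname{tr}\bigl((a_{ij})\hess{v}\bigr)\ge 0$ at a point where $\grad{v}=\zerovec$ and $\hess{v}\le O$, and a boundary maximum is excluded by perturbing with $\epsilon\psi$, where the strict obliqueness $\inf_{\pOmega}\innerproductSingle{\xi}{\normalConv_\Omega}>0$ makes $\innerproductSingle{\xi}{\grad{v_\epsilon}}<0$ on $\pOmega$ incompatible with $\grad{v_\epsilon}=c\,\normalConv_\Omega$, $c\ge 0$, at a boundary maximum. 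The trade-off is transparency versus length: your version makes visible exactly which hypotheses are used (in particular, you never need the Lipschitz continuity of $(a_{ij})^{1/2}$, only boundedness of the coefficients, because you compare a classical $C^2(\Omega)\cap C^1(\closure{\Omega})$ subsolution against the constant $\lambda$ rather than two general solutions), while the paper's citation covers more general situations at no cost.

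Two technical points deserve attention. First, your barrier: you take $\psi$ equal to the distance to $\pOmega$ near the boundary so that $\grad{\psi}=-\normalConv_\Omega$ there, but the lemma only assumes $\pOmega$ uniformly $C^1$, and for a merely $C^1$ boundary the distance function need not be $C^2$ (or even differentiable) in any uniform neighborhood of $\pOmega$. The fix is standard but should be said: work in local graph coordinates, mollify the graph functions, and patch with a partition of unity to produce a $C^2$ function $\psi$ with $\grad{\psi}$ uniformly close in direction to $-\normalConv_\Omega$ on $\pOmega$; since $\xi$ is bounded and $\innerproductSingle{\xi}{\normalConv_\Omega}\ge\delta$, this still gives $\innerproductSingle{\xi}{\grad{\psi}}\le-\delta/2$ on $\pOmega$, which is all your argument needs. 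Second, you assume $\Omega$ bounded in order to attain the maximum of $v_\epsilon$ on $\closure{\Omega}$; the lemma as stated allows an arbitrary domain with uniformly $C^1$ boundary. You flag this, and it is harmless for the paper's application (where $\Omega$ is bounded and convex), but as a proof of the lemma as written it would require an additional localization or a boundedness assumption on $v$. With these two repairs the proof is complete.
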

\begin{proof}
    Since $v\equiv \lambda$ is a solution, the assertion follows from a {classical} comparison principle {for linear equations (see e.g. \cite{ProtterWeinberger1984})}.
\end{proof}
\begin{rem}
    The comparison principle for the oblique derivative boundary problem is well known even for a viscosity solution
    as stated in Theorem \ref{thm:4_ComparisonPrinciple}.
    See e.g. \cite{Ba, IshiiLions1990}.
\end{rem}

\begin{lem}\label{lem:4_sufficient_cond_boundary}
Assume that $\partial\Omega$ is uniformly $C^2$ so that $\normalConv_\Omega$ is uniformly $C^1$.
 Assume that $w{\in C^3(\Omega)}$ is $C^2$ up to the boundary.
 Assume that $w$ satisfies $B({\cdot},\nabla w{(\cdot)})=0$ on $\partial\Omega$.
 Assume that $\beta$ is $C^1$, $\|\beta\|_\infty<1$ and $\|\nabla_{\partial\Omega}\beta\|_\infty<\infty$.
 If $B$ satisfies
\[
	\sum_{{i}=1}^d w_{{i}}(x) \frac{\partial B}{\partial x_{{i}}} (x,\nabla w{(x)}) \geq 0 \quad\text{for}\quad x\in\pOmega,
\]
then for $u:={\frac{1}{2}|\grad{w}|^2}$, {the $C^1$ vector field ${\xi}=\nabla_pB(\cdot,\nabla w(\cdot))$ satisfies}
\[
	{\innerproductSingle{{\xi}}{\grad{u}}} \leq 0 \quad\text{on}\quad \partial\Omega
\]
and
\[
    {\innerproductSingle{{\xi}}{\nu_{\Omega}}}\geq 1-\|\beta\|_\infty>0 \quad\text{on}\quad\pOmega.
\]
\end{lem}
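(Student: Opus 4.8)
The plan is to verify the two boundary inequalities separately. The bound on $\langle\xi,\nu_\Omega\rangle$ is immediate, whereas $\langle\xi,\nabla w\rangle\leq 0$ — rather, the oblique–derivative inequality $\langle\xi,\nabla u\rangle\leq 0$ for $u=\frac12|\nabla w|^2$ — is the substantial point, and I would obtain it by differentiating the contact–angle condition tangentially along $\partial\Omega$.

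First I would dispose of the normal bound. Since $B(x,p)=\langle p,\nu_\Omega(x)\rangle+\beta(x)|p|$, differentiating in $p$ gives $\xi=\nabla_pB(\cdot,\nabla w)=\nu_\Omega+\beta\,\nabla w/|\nabla w|$ at every boundary point where $\nabla w\neq 0$ (which is exactly where $\xi$ is $C^1$). Pairing with $\nu_\Omega$ and inserting the boundary relation $\langle\nabla w,\nu_\Omega\rangle=-\beta|\nabla w|$ yields $\langle\xi,\nu_\Omega\rangle=1+\beta\,\langle\nabla w,\nu_\Omega\rangle/|\nabla w|=1-\beta^2$, and factoring $1-\beta^2=(1-|\beta|)(1+|\beta|)\geq 1-|\beta|\geq 1-\|\beta\|_\infty>0$ settles the second conclusion. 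I also record the identity $\langle\xi,\nabla w\rangle=B(\cdot,\nabla w)=0$ on $\partial\Omega$, which is Euler's relation for the $1$-homogeneous map $p\mapsto B(x,p)$; thus $\xi\perp\nabla w$ there, a fact I expect to reuse.

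For the oblique–derivative inequality, note that $\nabla u=\nabla^2w\,\nabla w$, so by symmetry of the Hessian $\langle\xi,\nabla u\rangle=\langle\nabla^2w\,\xi,\nabla w\rangle$. To bring in the boundary data I would differentiate the identity $B(\cdot,\nabla w)\equiv 0$ along an arbitrary tangent field $\tau$ of $\partial\Omega$: the chain rule gives $0=\langle\nabla_xB,\tau\rangle+\langle\xi,\nabla^2w\,\tau\rangle=\langle\nabla_xB+\nabla^2w\,\xi,\tau\rangle$, where $\nabla_xB$ denotes the derivative of $B$ in its spatial slot. Hence the tangential component of $\nabla_xB+\nabla^2w\,\xi$ vanishes, i.e. $\nabla^2w\,\xi=-\nabla_xB+c\,\nu_\Omega$ with $c:=\langle\nabla^2w\,\xi,\nu_\Omega\rangle$. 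Substituting this into $\langle\nabla^2w\,\xi,\nabla w\rangle$ and splitting $\nabla w$ into its tangential and normal parts (using again $\langle\nabla w,\nu_\Omega\rangle=-\beta|\nabla w|$, and that $\langle\nabla_xB,\nabla w\rangle=\sum_i w_i\,\partial_{x_i}B$ when the spatial derivatives are taken intrinsically along $\partial\Omega$) collapses the expression to
\[
    \langle\xi,\nabla u\rangle=-\sum_{i=1}^d w_i\,\partial_{x_i}B-\beta|\nabla w|\,c .
\]
The first summand is $\leq 0$ precisely by the hypothesis $\sum_i w_i\,\partial_{x_i}B\geq 0$.

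The main obstacle is therefore the residual term $-\beta|\nabla w|\,c$, which carries the second \emph{normal} derivative $c=\langle\nabla^2w\,\xi,\nu_\Omega\rangle$ and is not pinned down by the contact–angle condition alone. Controlling its sign — that is, showing $\beta c\geq 0$, or absorbing it — is the heart of the matter, and it is here that the detailed structure must be exploited: I would re-express $c$ through the tangential relation above and the Weingarten map $\nabla_{\partial\Omega}\nu_\Omega$ of $\partial\Omega$, so that the assumed $C^1$ regularity and smallness of $\beta$ relative to the boundary geometry enter. I anticipate this reduction, linking the normal second–derivative term to the curvature of $\partial\Omega$ and to $\nabla_{\partial\Omega}\beta$, to be the delicate step of the argument, while the remaining pieces are the routine computations sketched above.
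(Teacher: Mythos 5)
Your treatment of the second conclusion is correct (and in fact sharper than needed: substituting the boundary relation $\langle\nabla w,\nu_\Omega\rangle=-\beta|\nabla w|$ gives the exact value $1-\beta^2$, whereas the paper only invokes the Cauchy--Schwarz inequality to reach $\geq 1-\|\beta\|_\infty$). The problem is the first conclusion: your argument does not establish it. Because you differentiate the identity $B(x,\nabla w(x))=0$ only tangentially along $\partial\Omega$, you are left with
\[
  \langle\xi,\nabla u\rangle=-\sum_{i=1}^d w_i\,\partial_{x_i}B-\beta|\nabla w|\,c,
\]
where $c$ carries the second normal derivative $\langle\nabla^2 w\,\xi,\nu_\Omega\rangle$. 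You say yourself that controlling the sign of $-\beta|\nabla w|\,c$ ``is the heart of the matter'' and only \emph{anticipate} a reduction via the Weingarten map; no such reduction is carried out, and it is not clear one exists, since this quantity is a genuine second-order normal datum of $w$ that is not pinned down by the contact-angle condition, by $\nabla_{\partial\Omega}\beta$, or by the curvature of $\partial\Omega$. The proposal therefore stops exactly at the step that makes the lemma nontrivial.

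The paper's proof avoids this term altogether: it differentiates $B(x,\nabla w(x))=0$ in \emph{every} coordinate direction $x_i$ (with $\beta$, and implicitly $\nu_\Omega$, extended off $\partial\Omega$, as made explicit in the proof of the subsequent lemma), multiplies the $i$-th identity by $w_i$ and sums over $i$, obtaining
\[
  \sum_{\ell=1}^d\frac{\partial B}{\partial p_\ell}(x,\nabla w(x))\,\frac{\partial u}{\partial x_\ell}(x)+\sum_{i=1}^d w_i(x)\,\frac{\partial B}{\partial x_i}(x,\nabla w(x))=0,
\]
that is, $\langle\xi,\nabla u\rangle=-\sum_i w_i\,\partial_{x_i}B\leq 0$ directly from the hypothesis, with no residual term. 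In that bookkeeping the directional derivative of $x\mapsto B(x,\nabla w(x))$ along the full (non-tangential) vector $\nabla w$ is taken to vanish on $\partial\Omega$; the quantity you call $-\beta|\nabla w|\,c$ is precisely the normal component of that directional derivative. You have correctly isolated the delicate point, but to produce a proof you must either adopt the paper's full differentiation (together with the extension convention that makes it meaningful) or supply the missing control of $c$; as written, the argument does not close.
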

\begin{proof}
    First differentiate $B(x,\nabla w{(x)})=0$ in $x_i$ and multiply $w_i{(x)}$ to get
    \begin{equation}\label{eq:4_lemma3_6_1}
    	\sum_{\ell=1}^d w_i{(x)} \frac{\partial B}{\partial p_\ell}{(x,\grad{w(x)})} w_{i\ell}{(x)} + w_i{(x)} \frac{\partial B}{\partial x_i}{(x,\grad{w(x)})} = 0{.}
    \end{equation}
    We sum up \eqref{eq:4_lemma3_6_1} from $i=1$ to $d$ to get 
    \[
    	\sum_{\ell=1}^d \frac{\partial B}{\partial p_\ell}{(x,\grad{w(x)})} \frac{\partial u}{\partial x_\ell}{(x)}
    	+ \sum_{i=1}^d w_i{(x)} \frac{\partial B}{\partial x_i}{(x,\grad{w(x)})} = 0.
    \]
    By our assumption, we now obtain
    \[
        {
            \innerproductSingle{{\xi}(x)}{\grad{u(x)}} = \nabla_{p}B(x,\grad{w(x)})\cdot\grad{u(x)} = \sum_{\ell=1}^d\frac{\partial B}{\partial p_\ell}(x,\grad{w(x)})\frac{\partial u}{\partial x_\ell}(x)\leq 0.
        }
    \]
    {Meanwhile, a} direct calculation shows
    \begin{equation*}
        \frac{\partial B}{\partial p_\ell}{(x,p)} = \nu_\ell(x) + \beta(x) \frac{p_\ell}{|p|},
    \end{equation*}
    where $\normalConv_{\ell}$ denotes the $\ell$-th element of $\normalConv_\Omega$. We deduce from the Schwarz inequality that
    \begin{equation*}
        {\innerproductSingle{{\xi}(x)}{\normalConv_\Omega(x)}} = \sum_{\ell=1}^d\left(\normalConv_\ell(x){^2} + \beta(x)\frac{w_\ell(x){\nu_\ell(x)}}{|\grad{w(x)}|}\right) =  1 + \beta(x) \frac{{\innerproductSingle{\nabla w(x)}{\nu_{\Omega}(x)}}}{|\nabla w(x)|} \geq 1 - \|\beta\|_\infty.
    \end{equation*}
    {We now complete the proof.}
\end{proof}

\begin{lem}\label{lem:4_sufficient_cond_boundary_2}
    Assume that $\partial\Omega$ is uniformly $C^2$.
    Assume that $\nabla_{\partial\Omega}\beta(x)$ is orthogonal to the kernel of 
    the inward Weingarten map $\nabla_{\partial\Omega}\normalConv_\Omega(x)$ for each $x\in\pOmega$.
    If $|\nabla_{\partial\Omega}\beta(x)|$ is bounded 
    by the minimal {nonnegative} principal {(inward)} curvature {$\kappa(x)$} of $\partial\Omega$ at each $x\in\pOmega$, then
    \[
    \sum_{i=1}^d w_i(x) \frac{\partial B}{\partial x_i}\left(x,\nabla w(x)\right) \geq 0
    \]
    is fulfilled for every $x\in\pOmega$.
\end{lem}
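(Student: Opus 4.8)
The plan is to reduce the claimed inequality to a pointwise algebraic estimate on $\pOmega$ for the quadratic form obtained by differentiating the boundary operator $B$ in the spatial variable, and then to play the convexity of $\Omega$ against the two structural hypotheses on $\nabla_{\partial\Omega}\beta$. First I would record the $x$-derivatives of $B(x,p)=\langle p,\nu_\Omega(x)\rangle+\beta(x)|p|$, namely
\[
    \frac{\partial B}{\partial x_i}(x,\nabla w(x))=\sum_{\ell=1}^d w_\ell(x)\,\partial_{x_i}\nu_\ell(x)+|\nabla w(x)|\,\partial_{x_i}\beta(x),
\]
so that, abbreviating $Q(x):=\sum_{i=1}^d w_i(x)\frac{\partial B}{\partial x_i}(x,\nabla w(x))$, one gets
\[
    Q(x)=\langle \nabla w,(\nabla_{\partial\Omega}\nu_\Omega)\nabla w\rangle+|\nabla w|\,\langle \nabla w,\nabla_{\partial\Omega}\beta\rangle .
\]
Here I would fix the natural extensions ($\nu_\Omega$ as the gradient of the signed distance, $\beta$ constant in the normal direction), so that $\nabla_{\partial\Omega}\nu_\Omega$ is symmetric with $\nu_\Omega$ in its kernel and $\nabla_{\partial\Omega}\beta$ is tangential along $\pOmega$. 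This is the quantity whose nonnegativity supplies the missing hypothesis of \Lemma{lem:4_sufficient_cond_boundary}.

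Next I would feed in the boundary condition from \eqref{eq:4_Neumann_problem_master}, i.e.\ $\langle \nabla w,\nu_\Omega\rangle=-\beta|\nabla w|$ on $\pOmega$. Decomposing $\nabla w=T+c\,\nu_\Omega$ into its tangential part $T$ and normal component $c=-\beta|\nabla w|$, and using $\|\beta\|_\infty<1$, I obtain $|T|^2=(1-\beta^2)|\nabla w|^2$, so that $T\neq 0$ whenever $\nabla w\neq 0$. Since $\nabla_{\partial\Omega}\nu_\Omega$ annihilates $\nu_\Omega$ and $\nabla_{\partial\Omega}\beta$ is tangential, both terms of $Q$ see $\nabla w$ only through $T$, giving
\[
    Q=\langle T,S\,T\rangle+|\nabla w|\,\langle T,\nabla_{\partial\Omega}\beta\rangle,\qquad S:=\nabla_{\partial\Omega}\nu_\Omega .
\]
Convexity of $\Omega$ enters precisely here: $S$ is positive semi-definite on the tangent space with eigenvalues the (nonnegative) principal curvatures, so the first term is nonnegative and the whole task becomes the absorption of the sign-indefinite cross term into it.

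For the cross-term estimate I would diagonalise $S$ on $T_x\pOmega$ in an orthonormal eigenbasis, splitting the tangent space into the kernel $\ker S$ (the flat directions) and its complement $V_+$, on which $S\ge\kappa(x)\,\mathrm{Id}$ with $\kappa(x)$ the minimal nonnegative principal curvature. The orthogonality hypothesis $\nabla_{\partial\Omega}\beta\perp\ker S$ is what lets me discard the flat directions: writing $T=T_0+T_+$ with $T_0\in\ker S$, one has $\langle T,\nabla_{\partial\Omega}\beta\rangle=\langle T_+,\nabla_{\partial\Omega}\beta\rangle$ and $\langle T,S\,T\rangle=\langle T_+,S\,T_+\rangle\ge\kappa(x)|T_+|^2$, so the cross term depends only on the same component $T_+$ that the curvature term controls. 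A Cauchy--Schwarz bound $|\langle T_+,\nabla_{\partial\Omega}\beta\rangle|\le|T_+|\,|\nabla_{\partial\Omega}\beta|$ combined with the curvature bound $|\nabla_{\partial\Omega}\beta|\le\kappa(x)$ is then the mechanism that should force $Q\ge 0$.

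I expect this last estimate to be the main obstacle, since it is where all three hypotheses must be balanced simultaneously. The orthogonality condition is indispensable because a large tangential component of $\nabla w$ lying in $\ker S$ inflates $|\nabla w|$ in the cross term without being seen by $\langle T,S\,T\rangle$; removing that component is exactly what the orthogonality achieves, and it is also the geometric content of the remark that $\beta$ must be constant along flat directions. The delicate bookkeeping is the weight $\big(1-\beta^2\big)^{1/2}$ relating $|\nabla w|$ to $|T|$ produced by the boundary-condition decomposition: the argument must be arranged so that the principal-curvature lower bound dominates this weighted cross term, and this is precisely the step where $\|\beta\|_\infty<1$ is essential, since it keeps $T$ uniformly comparable to $\nabla w$ and prevents the normal constraint from degenerating. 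Verifying that the curvature bound indeed closes the estimate after this weight is accounted for is the crux of the proof.
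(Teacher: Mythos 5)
Your reduction of the claim to the pointwise inequality $Q(x)=\langle\nabla w,(\nabla_{\pOmega}\nu_\Omega)\nabla w\rangle+|\nabla w|\langle\nabla w,\nabla_{\pOmega}\beta\rangle\ge 0$, and your use of the kernel-orthogonality hypothesis to discard the flat directions, follow the paper's strategy. The problem is the step you yourself flag as ``the crux'': with the bounds you have set up, the estimate does \emph{not} close. You obtain
$Q\ge\kappa(x)|T_+|^2-\kappa(x)|T_+|\,|\nabla w|=\kappa(x)|T_+|\bigl(|T_+|-|\nabla w|\bigr)$,
and since your own decomposition via the boundary condition gives $|T_+|\le|T|=\sqrt{1-\beta^2}\,|\nabla w|<|\nabla w|$ whenever $\beta\ne 0$, this lower bound is nonpositive rather than nonnegative. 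So the proposal as written does not prove the lemma; the weight $\sqrt{1-\beta^2}$ that you correctly identify as the delicate point is precisely what breaks the absorption, and leaving its verification open leaves the whole argument open.

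The paper closes the argument by \emph{not} passing to the tangential component: it bounds the Weingarten term from below by $\kappa(x)|\nabla w|^2$ (the full gradient norm) and the cross term by Cauchy--Schwarz as $|\nabla w|\langle\nabla w,\nabla_{\pOmega}\beta\rangle\ge-|\nabla w|^2|\nabla_{\pOmega}\beta|\ge-\kappa(x)|\nabla w|^2$, so the two estimates cancel exactly and $Q\ge(\kappa(x)-|\nabla_{\pOmega}\beta(x)|)|\nabla w|^2\ge 0$. In other words, the paper's proof uses a lower bound for the quadratic form of $\nabla_{\pOmega}\nu_\Omega$ in terms of the norm of the \emph{whole} vector $\nabla w$, not merely of its component in $(\ker S)^\perp\cap T_x\pOmega$; your finer analysis shows that if one only uses $S\ge\kappa(x)\,\mathrm{Id}$ on that subspace, the worst case ($T_+$ antiparallel to $\nabla_{\pOmega}\beta$ with $|\nabla_{\pOmega}\beta|=\kappa(x)$) defeats the inequality. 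To repair your route you would have to either justify the stronger lower bound $\langle\nabla w,(\nabla_{\pOmega}\nu_\Omega)\nabla w\rangle\ge\kappa(x)|\nabla w|^2$ that the paper invokes, or compensate for the loss by strengthening the hypothesis on $|\nabla_{\pOmega}\beta|$ by the factor relating $|T_+|$ to $|\nabla w|$; as it stands the final inequality is asserted in the wrong direction and the proof is incomplete.
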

\begin{proof}
    Since
    \[
    \frac{\partial B}{\partial x_{{i}}}{(x,\grad{w(x)})} = {\innerproductSingle{\frac{\partial\nu_{{\Omega}}}{\partial x_{{i}}}(x)}{\nabla w(x)}} + \frac{\partial\beta}{\partial x_{{i}}} (x) |\grad{w{(x)}}|,
    \]
    we see that
    \[
    \sum_{{i}=1}^d w_{{i}}(x) \frac{\partial B}{\partial x_{{i}}} (x{,\grad{w(x)}})
    = \sum_{\ell,{i}=1}^d \frac{\partial\nu_{{\ell}}}{\partial x_{{i}}}{(x)} w_\ell{(x)} w_{{i}}{(x)} + {\sum_{i=1}^d} w_{{i}}{(x)} \frac{\partial\beta}{\partial x_{{i}}}(x)|\nabla w{(x)}|.
    \]
    We extend $\beta$ as constant to the $\nu_{{\Omega}}$-direction.
     Since $\nabla_{\partial\Omega}\beta$ is orthogonal to the kernel of the Weingarten map {$\nabla_{\pOmega}\nu_\Omega$}, we proceed
    \begin{align*}
        \sum_{\ell,{i}=1}^d \frac{\partial\nu_{{\ell}}}{\partial x_{{i}}}{(x)} w_\ell{(x)} w_{{i}}{(x)} + \sum_{{i}=1}^d w_{{i}}{(x)} \frac{\partial\beta}{\partial x_{{i}}}{(x)} |\nabla w{(x)}|
        &\geq \kappa(x) |\nabla w{(x)}|^2 - |\nabla w{(x)}|^2 |\nabla_{\partial\Omega}\beta{(x)}|\\
        &{=(\kappa(x) - |\nabla_{\pOmega}\beta(x)|) |\grad{w(x)}|^2\geq 0}.
    \end{align*}
    Our assumption guarantees that the right-hand side is positive.
    The proof is now complete.
\end{proof}

We are now in the position to prove Proposition \ref{thm:4_grad_bound_of_weaksol}.

\begin{proof}[Proof of Proposition \ref{thm:4_grad_bound_of_weaksol}]
    We define $u := \frac{1}{2}|\nabla w|^2$ and argue by Bernstein's method (see e.g. \cite[Chapter 15]{GilbargTrudinger1983}).
    We differentiate the first equation of \eqref{eq:4_Neumann_problem_master} in the direction $x_k\ (1\leq k\leq d)$ and {multiply it} by $w_k{(x)}$ {to get}
    \begin{equation}\label{eq:4_bernstein_1}
        w_k{(x)}^2 - h\sum_{i=1}^dw_k{(x)}\del{i}{\left(\sum_{j=1}^d\anisotropy{i}{j}{\grad{w{(x)}}}w_{jk}{(x)}\right)} = w_k{(x)}g_k{(x)}.
    \end{equation}
    Meanwhile, we calculate
    \begin{align}\label{eq:4_bernstein_2}
        &\sum_{i=1}^dw_k{(x)}\del{i}{\left(\sum_{j=1}^d\anisotropy{i}{j}{\grad{w{(x)}}}w_{jk}{(x)}\right)}\\\nonumber
        =& \sum_{i=1}^d\del{i}{\left(\sum_{j=1}^dw_k{(x)}\anisotropy{i}{j}{\grad{w{(x)}}}w_{jk}{(x)}\right)} - \sum_{i,j=1}^d w_{ik}{(x)}\anisotropy{i}{j}{\grad{w{(x)}}}w_{jk}{(x)}\\\nonumber
        \leq &\sum_{i=1}^d\del{i}{\left(\sum_{j=1}^d\anisotropy{i}{j}{\grad{w{(x)}}}w_k{(x)}w_{jk}{(x)}\right)} =\sum_{i=1}^d\del{i}{\left\{\sum_{j=1}^d\anisotropy{i}{j}{\grad{w{(x)}}}{\del{j}{}}\left(\frac{w_k{(x)}^2}{2}\right)\right\}}.
    \end{align}
    Here, the inequality in $\eqref{eq:4_bernstein_2}$ follows from the positive definiteness of\linebreak $(\anisotropy{i}{j}{\grad{w{(x)}}})_{1\leq i,j\leq d}$.
    Summing up $\eqref{eq:4_bernstein_1}$ over $1\leq k\leq d$ and taking $\eqref{eq:4_bernstein_2}$ into account,
    we obtain
    \begin{align*}
        &2u{(x)} - h\sum_{i=1}^d\del{i}{\left(\sum_{j=1}^d\anisotropy{i}{j}{\grad{w{(x)}}}u_j{(x)}\right)}\\
        & \leq \sum_{k=1}^d\left\{w_k{(x)}^2 - h\sum_{i=1}^dw_k{(x)}\del{i}{\left(\sum_{j=1}^d\anisotropy{i}{j}{\grad{w{(x)}}}w_{jk}{(x)}\right)}\right\}\\ 
        & {=}\sum_{k=1}^dw_k{(x)}g_k{(x)} \leq \frac{1}{2}|\grad{w{(x)}}|^2 + \frac{1}{2}|\grad{g{(x)}}|^2 \leq u{(x)} + {\frac{1}{2}\|\grad{g}\|_\infty^2}.
    \end{align*}
    Hence, $u$ satisfies the first equation of \eqref{eq:4_strong_maximum_principle_pde} 
    in the case where $\lambda := {\frac{1}{2}\|\grad{g}\|_\infty^2}$ and $L(u) = -\operatorname{div}(A(x)\grad{u})$ with $A(x) := (\anisotropy{i}{j}{\grad{w(x)}})_{1\leq i,j\leq d}$.
    We have already seen that $\innerproductSingle{{\xi}}{\grad{u}}\leq 0$ with ${\xi} := \gradbdd{p}{B({\cdot},\grad{w{(\cdot)}})}$ 
    in Lemma \ref{lem:4_sufficient_cond_boundary} and Lemma \ref{lem:4_sufficient_cond_boundary_2}.
    Therefore, we conclude from Lemma \ref{lem:4_comparison_principle} that 
    $\|\grad{w}\|_\infty\leq \|\grad{g}\|_\infty$.
\end{proof}

\begin{rem}
    To guarantee that $w$ is $C^2$ up to the boundary, we need a regularity assumption on $\Omega$ which is slightly more than $C^2$,
    say $C^{2,\alpha}$. {The} $C^{2,1}$ {assumption} is {sufficient} to {guarantee $C^2$ regularity for $w$}
    {(see e.g., \cite[\S 6.4]{GilbargTrudinger1983})}.
\end{rem}


We are now in the position to define a set operator $\chambolleSet{h}:\mathcal{P}(\closure{\Omega})\to\mathcal{P}(\closure{\Omega})$
where $\mathcal{P}(\closure{\Omega})$ denotes the set of all subsets in $\closure{\Omega}$.
For each $E\subset\closure{\Omega}$, we set
\begin{equation*}
    \chambolleSet{h}(E) := \{\weaksol{h}{E}\leq 0\},
\end{equation*}
where $\weaksol{h}{E}$ is the unique minimizer of $\chambolleEnergyFour{u}$.
Then, we have two important properties of the set operator $\chambolleSet{h}$ as follows.

\begin{lem}[Monotonicity of $\chambolleSet{h}$]\label{lem:4_ChambollesetMonotonicity}
    Suppose that $E\subset F\subset\closure{\Omega}$. 
    Then, it holds that $\chambolleSet{h}(E)\subset\chambolleSet{h}(F)$.
\end{lem}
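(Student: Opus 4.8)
The plan is to reduce the asserted set inclusion to a pointwise ordering of the two minimizers, and to derive that ordering from \Lemma{lem:Monotonicity} together with an elementary monotonicity of the signed geodesic distance under set inclusion. Recall that, by definition, $\weaksol{h}{E}$ is the unique minimizer of $\chambolleEnergyFour{u}$ with data $g=\geodis{E}$, and $\chambolleSet{h}(E)=\{\weaksol{h}{E}\leq 0\}$. Hence it suffices to prove
\begin{equation*}
    \weaksol{h}{F}\leq\weaksol{h}{E}\qquad\text{a.e. in }\Omega,
\end{equation*}
since then $\weaksol{h}{E}(x)\leq 0$ forces $\weaksol{h}{F}(x)\leq 0$, which gives $\{\weaksol{h}{E}\leq 0\}\subset\{\weaksol{h}{F}\leq 0\}$, that is, $\chambolleSet{h}(E)\subset\chambolleSet{h}(F)$.

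First I would record the key geometric fact: if $E\subset F$, then $\geodis{F}\leq\geodis{E}$ in $\closure{\Omega}$. This is a consequence of the fact that $A\mapsto\operatorname{dist}_\Omega(\cdot,A)$ is non-increasing with respect to inclusion. Writing the signed geodesic distance in its standard form $\geodis{E}=\operatorname{dist}_\Omega(\cdot,E)-\operatorname{dist}_\Omega(\cdot,\Omega\setminus E)$ (see \cite[Definition 4]{EtoGiga2023}), the inclusion $E\subset F$ yields $\operatorname{dist}_\Omega(\cdot,F)\leq\operatorname{dist}_\Omega(\cdot,E)$, while $\Omega\setminus F\subset\Omega\setminus E$ yields $\operatorname{dist}_\Omega(\cdot,\Omega\setminus F)\geq\operatorname{dist}_\Omega(\cdot,\Omega\setminus E)$; subtracting the two inequalities gives $\geodis{F}\leq\geodis{E}$ pointwise. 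Equivalently, one may argue by the short case distinction $x\in E$, $x\in F\setminus E$, and $x\notin F$, which in each case produces the same inequality.

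With this in hand, I would apply \Lemma{lem:Monotonicity} with $f:=\geodis{F}$ and $g:=\geodis{E}$: since $f\leq g$ a.e., the corresponding minimizers satisfy $\weaksol{h}{F}\leq\weaksol{h}{E}$ a.e., which is precisely the ordering required above, and the conclusion follows. I expect the only substantive point to be the monotonicity of $\geodis{\cdot}$ under set inclusion; once that is in place, the remainder is an immediate invocation of \Lemma{lem:Monotonicity} and the definition of $\chambolleSet{h}$ as a zero sub-level set. A minor technical care is needed because the minimizer is a priori determined only up to a null set, so that $\{\weaksol{h}{E}\leq 0\}$ and the asserted inclusion should be read in the a.e. sense, or through the fixed continuous representative furnished by the equi-continuity results of this section; this does not affect the argument.
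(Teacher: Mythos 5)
Your proposal is correct and follows essentially the same route as the paper: the paper's proof likewise observes that $E\subset F$ implies $\geodis{E}\geq\geodis{F}$ in $\closure{\Omega}$, invokes \Lemma{lem:Monotonicity} to get $\weaksol{h}{E}\geq\weaksol{h}{F}$, and reads off the inclusion of the zero sub-level sets. Your additional justification of the monotonicity of the signed geodesic distance (which the paper delegates to \cite[Lemma 1]{EtoGiga2023}) and the remark on the a.e.\ versus continuous-representative reading are harmless elaborations, not a different argument.
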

\begin{proof}
    Since $\geodis{E}\geq \geodis{F}$ in $\closure{\Omega}$, we deduce from \Lemma{lem:Monotonicity} that
    $\weaksol{h}{E}\geq \weaksol{h}{F}$ in $\closure{\Omega}$. This readily yields $\chambolleSet{h}(E)\subset\chambolleSet{h}(F)$.
\end{proof}

\begin{rem}
    It is crucial to use the geodesic signed distance function $\geodis{E}$ as an initial data $g$
    {for} the variational problem \eqref{eq:4_chambolleEnergy}. Thanks to the monotonicity of $\geodis{E}$
    with respect to $E$ (see {e.g. \cite[Lemma 1]{EtoGiga2023}}), we do not need to assume the convexity of $\Omega$
    to obtain the monotonicity of $\chambolleSet{h}(E)$.
\end{rem}

{We are now in the position to establish the continuity of the scheme $\chambolleSet{h}$:}

\begin{lem}[Continuity of $\chambolleSet{h}$]\label{lem:ChambollesetContinuity}
    Let $\{E_n\}_n$ be a non-increasing sequence of relatively closed subsets in ${\closure{\Omega}}$.
    Then, it holds that
    \begin{equation*}
        \chambolleSet{h}\left(\bigcap_{n=1}^\infty E_n\right) = \bigcap_{n=1}^\infty\chambolleSet{h}(E_n).
    \end{equation*}
\end{lem}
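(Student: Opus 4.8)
The plan is to establish the two inclusions separately and to reduce the nontrivial one to a convergence statement for the minimizers. First I would note that, writing $E := \bigcap_{n=1}^\infty E_n$, one has $E\subset E_n$ for every $n$, so the monotonicity of $\chambolleSet{h}$ (\Lemma{lem:4_ChambollesetMonotonicity}) yields $\chambolleSet{h}(E)\subset\chambolleSet{h}(E_n)$ for all $n$ and hence $\chambolleSet{h}(E)\subset\bigcap_{n=1}^\infty\chambolleSet{h}(E_n)$. The whole content of the lemma is therefore the reverse inclusion, which I would obtain from the claim that $\weaksol{h}{E_n}\to\weaksol{h}{E}$ uniformly on $\closure{\Omega}$.

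To prove this convergence I would first record the monotonicity and convergence of the data. Since $\{E_n\}$ is non-increasing with relatively closed terms and $E=\bigcap_n E_n$, the signed geodesic distance functions $\geodis{E_n}$ increase (monotonicity of the signed distance, e.g. \cite[Lemma 1]{EtoGiga2023}) and converge uniformly on the compact set $\closure{\Omega}$ to $\geodis{E}$; the uniform convergence follows from Dini's theorem once one checks that the pointwise monotone limit equals $\geodis{E}$, which is the standard fact that $\operatorname{dist}(\cdot,E_n)\uparrow\operatorname{dist}(\cdot,E)$ for decreasing closed sets (any near-optimal points lie in the compact $E_1$, and a limit point of them lies in every $E_m$, hence in $E$). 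Applying \Lemma{lem:Monotonicity} then forces $\weaksol{h}{E_n}\leq\weaksol{h}{E_{n+1}}\leq\weaksol{h}{E}$, so $\{\weaksol{h}{E_n}\}_n$ is non-decreasing and bounded above by $\weaksol{h}{E}$.

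Next I would extract a uniform limit. Each $\weaksol{h}{E_n}$ solves the Neumann problem \eqref{eq:4_Neumann_problem_master} with datum $g=\geodis{E_n}$, and since the signed distance is $1$-Lipschitz, the gradient bound of \Theorem{thm:4_grad_bound_of_weaksol} (see also \Remark{rem:4_equi-continuity-g}) gives $\|\grad{\weaksol{h}{E_n}}\|_\infty\leq 1$ with a bound independent of $n$. Together with the maximum-principle bound on $\|\weaksol{h}{E_n}\|_\infty$, the family $\{\weaksol{h}{E_n}\}_n$ is equibounded and equi-Lipschitz, so by the Ascoli--Arzel\`a theorem some subsequence converges uniformly to a function $w\in C(\closure{\Omega})$. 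To identify $w$ I would use the uniform---hence $L^2$---convergence $\geodis{E_n}\to\geodis{E}$ together with \Proposition{prop:Lipschitz_Continuity}, which gives $\|\weaksol{h}{E_n}-\weaksol{h}{E}\|_2\leq\|\geodis{E_n}-\geodis{E}\|_2\to 0$; thus $w=\weaksol{h}{E}$ almost everywhere, and by continuity everywhere. Since the limit does not depend on the chosen subsequence and the sequence is monotone, the full sequence converges, $\weaksol{h}{E_n}\uparrow\weaksol{h}{E}$ uniformly.

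Finally the reverse inclusion follows: if $x\in\bigcap_n\chambolleSet{h}(E_n)$ then $\weaksol{h}{E_n}(x)\leq 0$ for every $n$, and passing to the limit $\weaksol{h}{E}(x)=\sup_n\weaksol{h}{E_n}(x)\leq 0$, i.e. $x\in\chambolleSet{h}(E)$. The hard part is not this last bookkeeping but the identification of the uniform limit with the genuine minimizer $\weaksol{h}{E}$: this rests entirely on the $n$-uniform equi-continuity of $\weaksol{h}{E_n}$, which is exactly the content of the gradient estimate \Theorem{thm:4_grad_bound_of_weaksol} proved by Bernstein's method, and on the uniform convergence of the geodesic distance functions. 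Once these two ingredients are in hand the argument is routine.
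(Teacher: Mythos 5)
Your proposal is correct and follows essentially the same route as the paper: monotonicity of $\chambolleSet{h}$ for the easy inclusion, and for the converse the uniform convergence $\weaksol{h}{E_n}\to\weaksol{h}{E}$ obtained from the $n$-uniform gradient bound of \Theorem{thm:4_grad_bound_of_weaksol}, the Ascoli--Arzel\`a theorem, and the identification of the limit via \Proposition{prop:Lipschitz_Continuity} together with the uniform convergence of $\geodis{E_n}$ to $\geodis{E}$. The only cosmetic differences are that you argue the reverse inclusion directly rather than by contraposition and add the monotonicity of $\{\weaksol{h}{E_n}\}_n$ to upgrade subsequential to full convergence, neither of which changes the substance.
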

\begin{proof}
    Let $E := \bigcap_{n=1}^\infty E_n$.
    Since $\bigcap_{n=1}^\infty\chambolleSet{h}(E_n)\supset\chambolleSet{h}(E)$ is obvious,
    we prove the converse inclusion. Suppose that $x\notin\chambolleSet{h}(E)$. Then, we have $\weaksol{h}{E}(x) > 0$.
    Since $\geodis{E_n}\to\geodis{E}$ pointwise as $n\to\infty$ and $\Omega$ is bounded, 
    this convergence is uniform in $\closure{\Omega}$. Hence, we deduce from Proposition \ref{prop:Lipschitz_Continuity} that
    $\weaksol{h}{E_n}\to\weaksol{h}{E}$ a.e. in $\Omega$ as $n\to\infty$.
    {Since $|\grad{\geodis{E_n}}|$ is uniformly bounded with respect to $n\in\mathbb{N}$},
    we see that $\{\weaksol{h}{E_n}\}_n$ is equi-continuous
    by {\Theorem{thm:4_grad_bound_of_weaksol} and \Remark{rem:4_equi-continuity-g}}. {Moreover,} the maximum principle {guarantees that $\weaksol{h}{E_n}$ is uniformly bounded with respect to $n\in\mathbb{N}$}.
    Hence, we can extract a subsequence $\{\weaksol{h}{E_{n_k}}\}_k$ by {the} Ascoli{--}Arzel\`a theorem
    {such that $\weaksol{h}{E_{n_k}}\to w$ uniformly in $\closure{\Omega}$ for some $w\in UC(\closure{\Omega})$. This $w$ should correspond to $\weaksol{h}{E}$. Letting $k\to\infty$,}
    we derive $\weaksol{h}{E_{n_k}}(x)\to\weaksol{h}{E}(x)$ which implies
    $\weaksol{h}{E_{n_k}}(x) > 0$ holds for sufficiently large $k\in\mathbb{N}$. This leads to $x\notin\bigcap_{n=1}^\infty\chambolleSet{h}(E_{n})$.
\end{proof}

As a conclusion of this section, we characterize the unique minimizer of \eqref{eq:4_chambolleEnergy} as the projection
to the data function onto a closed convex set in $L^2(\Omega)$:

\begin{prop}\label{prop:4_dual_argument_1}
    Let $g\in L^2(\Omega)$. Then, there exists $\overline{z}\in\mathbf{X}_2(\Omega)$ such that
    \begin{equation}\label{eq:4_DualMinimizing}
        \overline{z} = \operatorname{argmin}\left\{ \| \operatorname{div}{z} - g\|_2\ \biggm|\  \substack{z\in\mathbf{X}_2(\Omega),\ \|z\|_\infty\leq 1,\\ [z\cdot\nu] = -\beta\ \mathcal{H}^{d-1}\mbox{\small{-a.e.}}\ \mbox{\small{on}}\ \ \pOmega}\right\}.
    \end{equation}
\end{prop}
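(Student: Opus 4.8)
The plan is to obtain $\overline z$ as the solution of the Fenchel dual of the primal problem $\min_u \chambolleEnergyFour{u}$, whose unique minimizer $\weaksol{h}{g}$ is already under control. First I would write down the first-order optimality condition: since $\weaksol{h}{g}$ minimizes the strictly convex, lower semi-continuous functional $\chambolleEnergyFour{\cdot} = C_\beta(\cdot) + \tfrac{1}{2h}\|\cdot - g\|_2^2$, it satisfies $\tfrac{g-\weaksol{h}{g}}{h}\in\subdiff{C_\beta}{\weaksol{h}{g}}$. Because $C_\beta$ is convex, lower semi-continuous, positively $1$-homogeneous and vanishes at $0$, \Remark{rem:CharacterConjugate} identifies $C_\beta^*$ with the indicator $\chi_K$ of $K=\subdiff{C_\beta}{0}$; hence by the Fenchel identity (\Proposition{prop:FenchelIdentity}) the optimality condition is equivalent to $\tfrac{g-\weaksol{h}{g}}{h}\in K$ together with the pairing equality $C_\beta(\weaksol{h}{g})=\langle \tfrac{g-\weaksol{h}{g}}{h},\weaksol{h}{g}\rangle$.

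Next I would invoke the characterization of the subdifferential of $C_\beta$ from \cite[Theorem 2]{EtoGiga2023}, according to which every element of $K=\subdiff{C_\beta}{0}$ is of the form $-\operatorname{div} z$ for some $z\in\mathbf{X}_2(\Omega)$ with $\|z\|_\infty\le 1$ and $[z\cdot\nu]=-\beta$ $\mathcal{H}^{d-1}$-a.e.\ on $\pOmega$. Applying this to $\tfrac{g-\weaksol{h}{g}}{h}\in K$ produces an admissible field $\overline z$ with $-\operatorname{div}\overline z=\tfrac{g-\weaksol{h}{g}}{h}$. In particular the feasible set in \eqref{eq:4_DualMinimizing} is nonempty, and $\overline z$ is the natural candidate for its minimizer.

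It then remains to show that this $\overline z$ realizes the minimum in \eqref{eq:4_DualMinimizing}. For this I would carry out the usual minimax computation: writing $C_\beta$ as the support function $\sup_{p\in K}\langle p,\cdot\rangle$ and exchanging $\min_u$ with $\sup_{p\in K}$ (legitimate by convexity--concavity and the coercivity of the quadratic term), the inner minimization over $u$ is explicit and turns the primal value into $\sup_{p\in K}\{\langle p,g\rangle-\tfrac{h}{2}\|p\|_2^2\}$. Completing the square exhibits this as the projection of (a multiple of) $g$ onto $K$, and substituting $p=-\operatorname{div} z$ rewrites it, after the standard sign and scaling normalization, as the minimization of $\|\operatorname{div} z-g\|_2$ over the admissible fields; the primal--dual optimality relation then forces the maximizer to be exactly the field attached to $\weaksol{h}{g}$, i.e.\ $\overline z$. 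Strict convexity of $\|\cdot-g\|_2^2$ gives uniqueness of $\operatorname{div}\overline z$, though not of $\overline z$ itself.

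I expect two steps to carry the genuine weight. The first is attainment together with the absence of a duality gap: one must verify a constraint qualification so that Fenchel--Rockafellar strong duality applies, and obtain weak-$*$ compactness of a maximizing sequence --- here the bound $\|z\|_\infty\le 1$ gives weak-$*$ precompactness in $L^\infty$, while closedness of $\operatorname{div}$ as an operator into $L^2$ identifies the weak limit of $\operatorname{div} z_n$ with $\operatorname{div}\overline z$. The second and more delicate point is that the boundary constraint $[z\cdot\nu]=-\beta$ survives this passage to the limit; this relies on the continuity of the Anzellotti normal-trace pairing on $\mathbf{X}_2(\Omega)$, which is precisely what \cite[Theorem 2]{EtoGiga2023} packages.
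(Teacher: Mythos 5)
Your route is genuinely different from the paper's. The paper proves this proposition by the direct method, entirely inside the constrained least-squares problem: take a minimizing sequence $\{z_i\}$, extract a weak-$*$ limit $\overline z$ in $L^\infty(\Omega;\mathbb{R}^d)$ and a weak $L^2$ limit of $\operatorname{div}z_i$, identify the latter with $\operatorname{div}\overline z$ by testing against $C_0^\infty(\Omega)$, get $\|\overline z\|_\infty\le 1$ from weak-$*$ lower semicontinuity, and preserve the trace constraint $[\overline z\cdot\nu]=-\beta$ by testing against $C^\infty(\closure{\Omega})$ and using the Green formula for the normal trace. Your argument instead outsources the production of the admissible field to the characterization of $\subdiff{C_\beta}{0}$ in \cite[Theorem 2]{EtoGiga2023}. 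What this buys is brevity, nonemptiness of the feasible set for free, and the extra information that $\operatorname{div}\overline z$ is computable from a primal minimizer; what it costs is that the surjectivity direction of that characterization (every element of $K$ is $-\operatorname{div}z$ for an admissible $z$) already carries essentially the compactness and closedness content that the paper establishes by hand, so you must make sure the cited theorem is not itself proved via a statement of this type. Note also that the proposition is a bare existence statement for arbitrary $g\in L^2(\Omega)$ with no $h$ anywhere; the paper deliberately keeps it independent of the primal scheme. Your closing paragraph on weak-$*$ compactness and the normal-trace pairing is in fact the paper's entire proof, so if you carry it out you do not need the duality detour at all.

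There is also a concrete error in your identification of the minimizer. With $K=\subdiff{C_\beta}{0}=\{-\operatorname{div}z:\ z\ \mbox{admissible}\}$, the optimality condition for the primal problem with data $g$ gives $(g-\weaksol{h}{g})/h=\pi_K(g/h)$, hence your field satisfies $\operatorname{div}\overline z=-\pi_K(g/h)$. The minimizer required by \eqref{eq:4_DualMinimizing} is the field whose divergence is the projection of $g$ itself onto $\{\operatorname{div}z\}=-K$, i.e.\ $\operatorname{div}\overline z=-\pi_K(-g)$. These agree only when $g/h=-g$, so the field ``attached to $\weaksol{h}{g}$'' minimizes $\|h\operatorname{div}z+g\|_2$, not $\|\operatorname{div}z-g\|_2$, and the sentence claiming the primal--dual optimality relation forces the maximizer to be exactly that field is false as stated. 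The fix is pure bookkeeping: run the primal problem with data $-hg$, so that $(-hg-\weaksol{h}{-hg})/h=\pi_K(-g)$ and the associated admissible field does realize the minimum in \eqref{eq:4_DualMinimizing}. With that correction, and with the attainment/closedness step made explicit (which is where your argument and the paper's reconverge), your proof goes through.
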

\begin{proof}
    We take a minimizing sequence $\{z_i\}_i\subset\mathbf{X}_2(\Omega)$ of $\eqref{eq:4_DualMinimizing}$.
    Since $\{z_i\}_i$ is bounded in $L^\infty(\Omega;\mathbb{R}^d)$, up to a subsequence, there exists $\overline{z}\in L^\infty(\Omega;\mathbb{R}^d)$ such that 
    \begin{equation*}
        z_i\rightharpoonup\overline{z}\quad \mbox{weakly-}*\quad \mbox{in}\quad L^\infty(\Omega;\mathbb{R}^d).
    \end{equation*}
    Meanwhile, since $\{\operatorname{div}{z_i}\}_i$ is bounded in $L^2(\Omega)$,
    there exists $z_{div}\in L^2(\Omega)$ such that 
    \begin{equation*}
        \operatorname{div}{z_i}\rightharpoonup z_{div}\quad \mbox{weakly}\quad\mbox{in}\quad L^2(\Omega)
    \end{equation*}
    by taking a further subsequence.
    Then, we have $\operatorname{div}{\overline{z}} = z_{div}$ in $\mathcal{D}'(\Omega)$. Indeed, for any $\varphi\in C^\infty_0(\Omega)$, we deduce
    \begin{align}\label{eq:4_prop1_1}
        \int_\Omega z_{div}\varphi\,\dL{d} &= \lim_{i\to\infty}\int_\Omega (\operatorname{div}{z_i})\,\varphi\,\dL{d}\nonumber\\ 
        &= \lim_{i\to\infty}\int_\Omega -z_i\cdot\nabla\varphi\,\dL{d} = -\int_\Omega\overline{z}\cdot\nabla\varphi\,\dL{d}.
    \end{align}
    Here, the second equality is deduced from \cite[Proposition C.4]{Mazon}. Thus, we see $\overline{z}\in\mathbf{X}_2(\Omega)$.
    Moreover, the lower semi-continuity of $\{z_i\}_i$ in the topology of weakly-$*$ $L^\infty(\Omega;\mathbb{R}^d)$ yields
    \begin{equation*}
        \|\overline{z}\|_\infty \leq \liminf_{i\to\infty}\|z_i\|_\infty\leq 1.
    \end{equation*}
    For any $\varphi\in C^\infty(\closure{\Omega})$, we obtain
    \begin{align}\label{eq:4_prop1_2}
        \int_\Omega z_{div}\,\varphi\,\dL{d} &= \lim_{i\to\infty}\int_\Omega(\operatorname{div}{z_i})\,\varphi\,\dL{d}\nonumber\\
        &= \lim_{i\to\infty}\left(\int_{\pOmega}[z_i\cdot\nu]\,\varphi\,\dH{d-1} - \int_\Omega z_i\cdot\nabla\varphi\,\dL{d}\right)\nonumber\\
        &= \lim_{i\to\infty}\left(\int_{\pOmega}-\beta\,\varphi\,\dH{d-1} - \int_\Omega z_i\cdot\nabla\varphi\,\dL{d}\right)\nonumber\\ 
        &= \int_{\pOmega}-\beta\,\varphi\,\dH{d-1} - \int_\Omega\overline{z}\cdot\nabla\varphi\,\dL{d}.
    \end{align}
    The left-hand side of $\eqref{eq:4_prop1_2}$ is also deformed as follows:
    \begin{equation}\label{eq:4_prop1_3}
        \int_\Omega z_{div}\,\varphi\,\dL{d} = \int_\Omega(\operatorname{div}{\overline{z}})\,\varphi\,\dL{d} = \int_{\pOmega}[\overline{z}\cdot\nu]\,\varphi\,\dH{d-1} - \int_\Omega \overline{z}\cdot\nabla\varphi\,\dL{d}.
    \end{equation}
    Combining $\eqref{eq:4_prop1_2}$ and $\eqref{eq:4_prop1_3}$ gives
    \begin{equation*}
        \int_{\pOmega}[\overline{z}\cdot\nu]\,\varphi\,\dH{d-1} = \int_{\pOmega}-\beta\,\varphi\,\dH{d-1}.
    \end{equation*}
    Since $\varphi\in C^\infty(\closure{\Omega})$ is arbitrary, we see that $[\overline{z}\cdot\nu] = -\beta$ holds $\mathcal{H}^{d-1}$-a.e. on $\pOmega$.
    Therefore, we conclude that $\overline{z}$ is a minimizer of $\eqref{eq:4_DualMinimizing}$.
\end{proof}

\begin{prop}\label{prop:4_dual_argument_2}
    Let $g\in L^2(\Omega)$. Suppose that $w$ is a solution of
    \begin{equation}\label{eq:4_EulerLagrange}
        \frac{u-g}{h} + \subdiff{C_\beta}{u} \ni 0.
    \end{equation}
    Then, $w = g - \projection{hK_\beta}{g}$ holds where $K_\beta = \subdiff{C_\beta}{0}$ and $\projectionnoparam{hK_\beta}$ is the orthogonal projection to the set $hK_\beta$ in $L^2(\Omega)$.
\end{prop}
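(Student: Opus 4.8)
The plan is to dualize the inclusion by means of the Fenchel identity and then to recognize the resulting condition as precisely the variational characterization of the orthogonal projection of $g$ onto $hK_\beta$. Since $w$ is a solution of \eqref{eq:4_EulerLagrange}, there exists $p\in\subdiff{C_\beta}{w}$ with $p=(g-w)/h$; equivalently $w=g-hp$ and $g-hp=w$. First I would invoke the Fenchel identity (\Proposition{prop:FenchelIdentity}), which applies because $C_\beta$ is lower semi-continuous and convex, to rewrite $p\in\subdiff{C_\beta}{w}$ in the dual form $w\in\subdiff{C_\beta^*}{p}$.

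Next I would identify the conjugate function. Because $C_\beta$ is positively homogeneous of degree $1$ with $C_\beta(0)=0$, \Remark{rem:CharacterConjugate} shows that $C_\beta^*$ is the convex indicator of $K_\beta=\subdiff{C_\beta}{0}$, namely $C_\beta^*=0$ on $K_\beta$ and $C_\beta^*=+\infty$ off $K_\beta$. Feeding this into the definition of the subdifferential, the relation $w\in\subdiff{C_\beta^*}{p}$ forces $p\in K_\beta$ (so that $C_\beta^*(p)<\infty$) and $\innerproductSingle{w}{y-p}\leq 0$ for every $y\in K_\beta$. In particular $p\in K_\beta$ yields $hp\in hK_\beta$.

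The concluding step is to verify that $hp$ satisfies the defining inequality of the orthogonal projection of $g$ onto the closed convex set $hK_\beta\subset L^2(\Omega)$. Using $g-hp=w$, for any $z\in hK_\beta$ written as $z=hy$ with $y\in K_\beta$ one computes $\innerproductSingle{g-hp}{z-hp}=h\innerproductSingle{w}{y-p}\leq 0$. This is exactly the characterization $\innerproductSingle{g-\projection{hK_\beta}{g}}{z-\projection{hK_\beta}{g}}\leq 0$ of the projection, so by uniqueness of the orthogonal projection onto a nonempty closed convex subset of a Hilbert space we conclude $hp=\projection{hK_\beta}{g}$, and therefore $w=g-hp=g-\projection{hK_\beta}{g}$.

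I do not anticipate a genuine obstacle: once the conjugate of $C_\beta$ is identified, the argument is a clean duality computation. The only points meriting a word of care are that $K_\beta$, and hence $hK_\beta$, is nonempty (guaranteed since the solution $w$ furnishes the element $p\in K_\beta$), closed and convex, so that the projection is well defined and unique; these hold because $K_\beta=\subdiff{C_\beta}{0}$ is an intersection of closed half-spaces. I note finally that \Proposition{prop:4_dual_argument_1} renders this projection fully explicit, since $K_\beta$ coincides with the set of divergences $\operatorname{div} z$ appearing there, so that $\projection{hK_\beta}{g}$ is realized through the minimizer $\overline{z}$.
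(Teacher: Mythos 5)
Your proof is correct and follows essentially the same route as the paper: dualize the inclusion via the Fenchel identity, identify $C_\beta^*$ as the indicator of $K_\beta$ using the positive $1$-homogeneity of $C_\beta$, and conclude that $h p = \projection{hK_\beta}{g}$. The only cosmetic difference is that you verify the variational-inequality characterization of the projection directly, whereas the paper phrases the last step as the least-distance problem $\argmin_{\overline{u}\in K_\beta}\norm{h\overline{u}-g}_2$; these are equivalent.
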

\begin{proof}
    The variational problem $\eqref{eq:4_EulerLagrange}$ is equivalent to $w\in\subdiff{C_\beta^*}{(g - w)/h}$ due to Proposition \ref{prop:FenchelIdentity}.
    Setting $\overline{w} := (g - w) / h$, the problem is rewritten as 
    \begin{equation}\label{eq:4_EulerLagrangeDual}
        0\in h\overline{w} - g + \subdiff{C_\beta^*}{\overline{w}}.
    \end{equation}
    This implies that
    \begin{equation*}
        \overline{w} = \argmin_{\overline{u} \in L^2(\Omega)}{\left\{\frac{\|h\overline{u} - g\|^2_2}{2} + C_\beta^*(\overline{u})\right\}}.
    \end{equation*}
    Due to Remark \ref{rem:CharacterConjugate}, the above problem is reduced to
    \begin{equation}\label{eq:4_prop2_1}
        \overline{w} = \argmin_{\overline{u}\in K_\beta}{\norm{h\overline{u} - g}_2}{.}
    \end{equation}
    The formula $\eqref{eq:4_prop2_1}$ implies that 
    \begin{equation*}
        \overline{w} = \argmin_{\overline{u}\in K_\beta}\norm{h\overline{u}-g}_2 = \frac{1}{h}\argmin_{u\in hK_\beta}\norm{u-g}_2 = \frac{1}{h}\projection{hK_\beta}{g}.
    \end{equation*}
    Recalling $\overline{w} = (g-w)/h$, we conclude that $w = g - \projection{hK_\beta}{g}$.

\end{proof}
\begin{rem}
    {S}imilar arguments {as in} Proposition \ref{prop:4_dual_argument_1} and Proposition \ref{prop:4_dual_argument_2} can be found in \cite[\S 3]{Chambolle2004_TV}.
    Therein, Chambolle considered the dual problem (\eqref{eq:4_EulerLagrangeDual} in our case) instead of the original one (\eqref{eq:4_EulerLagrange} in our case) 
    to establish a gradient descent algorithm to compute a time discrete solution to the mean curvature flow.
\end{rem}

\section{Convergence of the proposed scheme}\label{sec:4_Convergence}
In this section, we shall show convergence of the approximate scheme $\approxscheme{}{h}$ to \eqref{eq:4_LevelSetEquationBdd}.
For this purpose, it is crucial to confirm that $\approxscheme{}{h}$ fulfills the conditions from 
\eqref{eq:4_Monotonicity} to \eqref{eq:4_consistency_sub}.

    We now define an approximate scheme $\approxscheme{}{h}$ for \eqref{eq:4_LevelSetEquationBdd} 
    with the aid of the capillary Chambolle type scheme $\chambolleSet{h}$ as follows:

    \begin{equation}\label{eq:4_S_h_def}
        {\chambolleFunction{h}{u_0}}(x) := \sup\{\lambda\in\mathbb{R}\mid x\in\chambolleSet{h}(\{u_0\geq \lambda\})\}.
    \end{equation}
    In terms of $\approxscheme{t}{h}$, we define an approximate solution $u^h$ to \eqref{eq:4_LevelSetEquationBdd} by
    \begin{equation*}
        u^h(t,x) := \approxscheme{\left[\frac{t}{h}\right]}{h}^{\lfloor\frac{t}{h}\rfloor}u_0(x).
    \end{equation*}
    {We now state a main result of this section as follows:}

\begin{thm}\label{thm:4_S_h_satisfies_MTC}
    Suppose that $\Omega$ is convex. Assume that there exist constants $\betalow < 0$ and $\betahigh > 0$ such that
    $-1 < \betalow \leq \beta\leq \betahigh < 1$ on $\pOmega$. Moreover, assume that $\beta$ satisfies the hypotheses of Theorem \ref{thm:4_grad_bound_of_weaksol}.
    Then, the approximate scheme $\chambolleFunction{h}{}$ defined by \eqref{eq:4_S_h_def} satisfies 
    the conditions from \eqref{eq:4_Monotonicity} to \eqref{eq:4_consistency_sub}.
\end{thm}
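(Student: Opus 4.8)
The plan is to verify the three structural properties of $\chambolleFunction{h}$ one at a time, exploiting throughout the level-set identity $\{\chambolleFunction{h}u\geq\lambda\}=\chambolleSet{h}(\{u\geq\lambda\})$ recorded in \eqref{eq:intro_Sh_Th_ls}, whose validity rests on the continuity of $\chambolleSet{h}$ from \Lemma{lem:ChambollesetContinuity}. This identity converts every statement about the function operator into one about the set operator $\chambolleSet{h}$ and the geodesic-distance minimizers $\weaksol{h}{E}$, and it is the lever for all that follows.

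Monotonicity is immediate: if $u\leq v$ then $\{u\geq\lambda\}\subset\{v\geq\lambda\}$ for every $\lambda$, so \Lemma{lem:4_ChambollesetMonotonicity} gives $\chambolleSet{h}(\{u\geq\lambda\})\subset\chambolleSet{h}(\{v\geq\lambda\})$, and taking the supremum in \eqref{eq:4_S_h_def} yields $\chambolleFunction{h}u\leq\chambolleFunction{h}v$. For translation invariance I would use $\{u+c\geq\lambda\}=\{u\geq\lambda-c\}$ and reparametrize the supremum by $\mu=\lambda-c$ to obtain $\chambolleFunction{h}(u+c)=\chambolleFunction{h}u+c$; the normalization $\chambolleFunction{h}(0)=0$ follows since the signed geodesic distance to $\closure{\Omega}$ (resp.\ to $\emptyset$) is nonpositive (resp.\ nonnegative), forcing $\chambolleSet{h}(\closure{\Omega})=\closure{\Omega}$ and $\chambolleSet{h}(\emptyset)=\emptyset$.

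The substance of the theorem is consistency. Writing $\mu=\varphi(z)$ and $E:=\sublevel{\varphi}{\mu}$, the identity \eqref{eq:intro_Sh_Th_ls} reduces the difference quotient $(\chambolleFunction{h}\varphi(z)-\varphi(z))/h$ to the behavior of $(\weaksol{h}{E}-\geodis{E})/h$ near $z$, and the goal becomes the locally uniform limit \eqref{eq:intro_Th_consistency}, which identifies this quotient with the mean curvature $\kappa_E$ of $\partial E$ at $z$, i.e.\ with $-F$ evaluated at $(\grad{\varphi}(z),\hess{\varphi}(z))$; since $F$ is continuous away from $(\zerovec,O)$, the envelopes $F_*,F^*$ coincide there, so in the nondegenerate case $\grad{\varphi}(z)\neq\zerovec$ the right-hand sides of \eqref{eq:4_consistency_super}--\eqref{eq:4_consistency_sub} are just $-F$. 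When $\grad{\varphi}(z)=\zerovec$ and $\hess{\varphi}(z)=O$ the right-hand side vanishes and the inequalities are trivial, so attention focuses on $\grad{\varphi}(z)\neq\zerovec$.

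For an interior point $z\in\Omega$ the inclusion $(\weaksol{h}{E}-\geodis{E})/h+\subdiff{C_0}{\weaksol{h}{E}}\ni 0$, together with the characterization of $\subdiff{C_0}{u}$, exhibits $\weaksol{h}{E}$ as a time-discretized mean curvature flow, and one sandwiches $E$ from inside and outside by balls (equivalently by the solitons of \Lemma{lem:4_existence_ts} with $\beta=0$) whose $\chambolleSet{h}$-image is explicitly computable, giving \eqref{eq:intro_Th_consistency}. The main obstacle is the boundary case $z\in\pOmega$, where the sign condition $\innerproductSingle{\grad{\varphi}(z)}{\normalConv_\Omega(z)}+\beta(z)|\grad{\varphi}(z)|>0$ (resp.\ $<0$) must be turned into a one-sided comparison compatible with the oblique condition $B(\cdot,\grad{\cdot})=0$. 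I would use the characterization of the subdifferential $\subdiff{C_\beta}{u}$ from \cite{EtoGiga2023} to build rigorous viscosity super- and subsolutions of \eqref{eq:4_Neumann_problem_master} out of the translating solitons of \Lemma{lem:4_existence_ts}, freezing $\beta$ at the value $\beta(z)$ and absorbing the spatial variation into a modulus of continuity; comparing $E$ against these barriers and invoking \Proposition{prop:4_existence_super_sub_solution} then yields \eqref{eq:intro_Th_consistency} uniformly up to $\pOmega$. The delicate point is that these soliton barriers—and hence the entire boundary estimate—can be constructed only when the contact angle stays away from the tangential regime, which is exactly why the strict bound $\|\beta\|_\infty<1$ enters the hypotheses.
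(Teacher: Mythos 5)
Your treatment of monotonicity and translation invariance matches the paper's (Proposition \ref{prop:4_Sh_monotone_translation}), and your reduction of consistency to set-level statements via \Lemma{lem:4_Sh_Th}, together with the use of translating-soliton barriers and the $\beta$-monotonicity of the scheme at boundary points, is essentially the paper's strategy in Theorem \ref{thm:4_generator_2}. However, there are two genuine gaps. First, the degenerate case $\grad{\varphi}(z)=\zerovec$, $\hess{\varphi}(z)=O$ is not ``trivial.'' The identity $F_*(\zerovec,O)=F^*(\zerovec,O)=0$ only tells you that the right-hand sides of \eqref{eq:4_consistency_super}--\eqref{eq:4_consistency_sub} vanish; you must still prove that $\relaxsup{h}{0}(\chambolleFunction{h}\varphi(z)-\varphi(z))/h\leq 0$ and $\relaxinf{h}{0}(\chambolleFunction{h}\varphi(z)-\varphi(z))/h\geq 0$, i.e.\ that the scheme moves an almost-flat profile by only $o(h)$. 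The paper establishes this by modifying $\varphi$ to be constant on $B(z,\varepsilon h)$, sandwiching it there between the solitons $\soliton{\mu-\varepsilon h-\chigh h,\betahigh}$ and $\soliton{\mu+\varepsilon h+\clow h,\betalow}$, applying $\chambolleFunction{h}$ together with \Lemma{lem:4_monotonicity_sh_beta}, and using that these solitons are translated exactly by the scheme. Your proposal contains no argument for this case.

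Second, your interior-point argument ``sandwiches $E$ from inside and outside by balls \dots\ whose $\chambolleSet{h}$-image is explicitly computable.'' This is the device of \cite{EtoGigaIshii2012,EtoGigaIshii2012_2} for $\Omega=\mathbb{R}^d$, and the paper explicitly points out (in the remark following Proposition \ref{prop:4_existence_super_sub_solution}) that it is unavailable here: the minimizer $\weaksol{h}{E}$ solves the boundary-value problem \eqref{eq:4_Neumann_problem_master} on all of $\Omega$, so any comparison barrier must be a global viscosity super- or subsolution satisfying the oblique condition $B(\cdot,\grad{w})=0$ on $\pOmega$, which a ball-based barrier (or a soliton with $k=0$, unless $\beta\equiv 0$) does not. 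The paper's construction instead glues, via a cutoff and a minimum, a local classical supersolution $\tilde{w}=\geodis{\sublevel{\varphi}{\mu}}-h\tilde{\eta}\kappa_{\tau(\varepsilon)}+(1-\tilde{\eta})h+2\varepsilon h$ to the \emph{global} translating soliton $\soliton{\mu+h,\betalow}$ built from the extreme value $\betalow$ of $\beta$, which handles the boundary condition everywhere; the same device, with $\betahigh$, gives the subsolution. Relatedly, for the boundary case you propose freezing $\beta$ at $\beta(z)$ and ``absorbing the spatial variation into a modulus of continuity,'' whereas the paper freezes at the global extremes $\betalow,\betahigh$ precisely so that \Lemma{lem:4_monotonicity_sh_beta} gives a clean one-sided comparison without any error term to absorb; as written, your variant would require an additional estimate you have not supplied.
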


Once Theorem \ref{thm:4_S_h_satisfies_MTC} is established, 
the statement of Theorem \ref{thm:final_statement} immediately 
follows from the result by Barles and Souganidis:
\begin{thm}[\cite{BarlesSouganidis1991}, Theorem 2.1]
    Suppose that $F:(\mathbb{R}^d\backslash\{\zerovec\})\times\mathbb{S}^d\to\mathbb{R}$ 
    is degenerate elliptic, geometric, continuous, and satisfies 
    $-\infty < F_*(\zerovec,O) = F^*(\zerovec,O) < \infty$.
    Assume that the approximate scheme $\approxscheme{t}{h}$ fulfills the conditions 
    from $\eqref{eq:4_Monotonicity}$ to $\eqref{eq:4_consistency_sub}$.
    Then, $u^h$ {uniformly} converges to the unique viscosity solution of $\eqref{eq:4_LevelSetEquationBdd}$.
\end{thm}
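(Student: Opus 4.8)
The plan is to run the Barles--Souganidis half-relaxed limit argument. On $\closure{\Omega}\times[0,T]$ define
\begin{equation*}
    \overline{u}(t,x) := \limsup_{\substack{h\to 0\\ (s,y)\to(t,x)}} u^h(s,y),\qquad
    \underline{u}(t,x) := \liminf_{\substack{h\to 0\\ (s,y)\to(t,x)}} u^h(s,y).
\end{equation*}
The first task is to ensure these are finite. Since $u_0\in UC(\closure{\Omega})$ is bounded, say $|u_0|\leq M$, translation invariance \eqref{eq:4_TranslationInvariance} gives $S_h(\pm M)=\pm M$, and monotonicity \eqref{eq:4_Monotonicity} then forces $-M\leq S_h u_0\leq M$; iterating the scheme, $|u^h|\leq M$ uniformly in $h$ and $t$. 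Hence $\overline{u}$ is a bounded upper semicontinuous function, $\underline{u}$ a bounded lower semicontinuous one, and $\underline{u}\leq\overline{u}$ holds by construction.

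The core step is to show $\overline{u}$ is a viscosity subsolution and $\underline{u}$ a viscosity supersolution of \eqref{eq:4_LevelSetEquationBdd}; I describe the subsolution case, the other being symmetric through \eqref{eq:4_consistency_sub}. Let $\varphi\in C^2(\closure{\Omega}\times(0,T))$ with $\overline{u}-\varphi$ attaining a strict local maximum at $(\hat t,\hat x)$. The standard relaxed-limit selection produces $h\to 0$ and local maximizers $(t_h,x_h)\to(\hat t,\hat x)$ of $u^h-\varphi$ with $u^h(t_h,x_h)\to\overline{u}(\hat t,\hat x)$. Setting $c_h:=u^h(t_h,x_h)-\varphi(t_h,x_h)$ and using the scheme relation $u^h(t_h,\cdot)=S_h[u^h(t_h-h,\cdot)]$, the maximum property gives $u^h(t_h-h,\cdot)\leq\varphi(t_h-h,\cdot)+c_h$ near $x_h$; a cutoff, legitimate thanks to $|u^h|\leq M$, makes this global, so monotonicity \eqref{eq:4_Monotonicity} and translation invariance \eqref{eq:4_TranslationInvariance} yield $\varphi(t_h,x_h)\leq S_h[\varphi(t_h-h,\cdot)](x_h)$, that is,
\begin{equation*}
    \frac{S_h[\varphi(t_h-h,\cdot)](x_h)-\varphi(t_h-h,x_h)}{h}\ \geq\ \frac{\varphi(t_h,x_h)-\varphi(t_h-h,x_h)}{h}.
\end{equation*}
Letting $h\to 0$, the right-hand side tends to $\varphi_t(\hat t,\hat x)$, while the fixed-time slice $\varphi(t_h-h,\cdot)\to\varphi(\hat t,\cdot)$ in $C^2$ lets consistency \eqref{eq:4_consistency_super} (whose relaxed $\operatorname{limsup^*}$ absorbs the approaching spatial sequence) bound the left-hand side by $-F_*(\grad\varphi(\hat t,\hat x),\hess\varphi(\hat t,\hat x))$. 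Thus $\varphi_t(\hat t,\hat x)+F_*(\grad\varphi(\hat t,\hat x),\hess\varphi(\hat t,\hat x))\leq 0$. At an interior point this is the required inequality; at $\hat x\in\pOmega$, if $B(\hat x,\grad\varphi(\hat x))>0$ then consistency \eqref{eq:4_consistency_super} applies and again gives the interior inequality, whereas if $B\leq 0$ then, $B$ being continuous at $\grad\varphi\neq\zerovec$, the boundary alternative $B_*\leq 0$ in the definition of subsolution holds directly. The degenerate case $\grad\varphi(\hat t,\hat x)=\zerovec$ is handled by the geometric structure of $F$ together with $F_*(\zerovec,O)=F^*(\zerovec,O)$.

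It remains to match the initial data and collapse the two limits. A barrier argument built from smooth test functions and the consistency relations, together with $u^h(0,\cdot)=u_0$ and the uniform continuity of $u_0$, shows $\overline{u}(0,\cdot)\leq u_0\leq\underline{u}(0,\cdot)$; hence $\overline{u}$ and $\underline{u}$ are respectively a bounded subsolution and a bounded supersolution including the initial condition. Since the hypotheses of \Theorem{thm:4_ComparisonPrinciple} have been verified for the present $F$ and $B$, the comparison principle gives $\overline{u}\leq\underline{u}$ on $\closure{\Omega}\times[0,T]$. Combined with $\underline{u}\leq\overline{u}$, we obtain $\overline{u}=\underline{u}=:u$, which is therefore continuous and a viscosity solution of \eqref{eq:4_LevelSetEquationBdd}, and is the unique such solution by comparison. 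Finally, equality of the two relaxed limits is equivalent to locally uniform convergence $u^h\to u$, and compactness of $\closure{\Omega}\times[0,T]$ upgrades this to uniform convergence.

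The main obstacle I anticipate is the boundary treatment in the sub/supersolution step: one must confirm that the relaxed limits respect the oblique derivative condition in the viscosity sense, i.e. that the dichotomy ``interior inequality or boundary inequality'' survives in the limit. This is precisely why the consistency conditions \eqref{eq:4_consistency_super}--\eqref{eq:4_consistency_sub} are stated conditionally on the sign of $\innerproductSingle{\grad\varphi}{\normalConv_\Omega}+\beta|\grad\varphi|$; checking this sign at the limit point $\hat x$ and simultaneously controlling the initial layer at $t=0$ are the delicate parts of the argument, whereas the monotonicity and translation-invariance manipulations are routine.
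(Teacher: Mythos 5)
Your proposal is correct and follows essentially the same route as the source the paper relies on: the paper does not reproduce a proof of this statement at all, but simply invokes \cite{BarlesSouganidis1991}, whose Theorem 2.1 is proved exactly by the half-relaxed-limits argument you reconstruct (boundedness via monotonicity and translation invariance, sub/supersolution property of $\limsup^*/\liminf_*$ via consistency with the boundary dichotomy, comparison to collapse the two limits, and the standard equivalence of that collapse with uniform convergence on the compact set $\closure{\Omega}\times[0,T]$). The points you flag as delicate (globalizing the local maximum by a cutoff, the $h$-dependence of the time slice fed into the consistency condition, the degenerate-gradient case, and the initial layer) are indeed the places where the full details live, and they are handled in the cited reference in just the way you indicate.
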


We begin with confirmation that $\chambolleFunction{h}$ is monotone and translation invariant:

\begin{prop}[Monotonicity and translation invariance of $\chambolleFunction{h}$]\label{prop:4_Sh_monotone_translation}
    The function operator $\chambolleFunction{h}$ satisfies the criteria \eqref{eq:4_Monotonicity} and \eqref{eq:4_TranslationInvariance}.
\end{prop}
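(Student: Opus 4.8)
The plan is to derive both properties directly from the definition \eqref{eq:4_S_h_def} of $\chambolleFunction{h}$ together with the monotonicity of the set operator $\chambolleSet{h}$ proved in \Lemma{lem:4_ChambollesetMonotonicity}; no new analytic estimate is required beyond the two degenerate endpoint identities discussed below.

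For the monotonicity \eqref{eq:4_Monotonicity}, suppose $u\leq v$. Then $\{u\geq\lambda\}\subset\{v\geq\lambda\}$ for every $\lambda\in\mathbb{R}$, since any point at which $u\geq\lambda$ also satisfies $v\geq u\geq\lambda$. Applying \Lemma{lem:4_ChambollesetMonotonicity} gives $\chambolleSet{h}(\{u\geq\lambda\})\subset\chambolleSet{h}(\{v\geq\lambda\})$, so for each fixed $x$ the set of admissible $\lambda$ appearing in \eqref{eq:4_S_h_def} for $u$ is contained in that for $v$. Passing to suprema yields $\chambolleFunction{h}u(x)\leq\chambolleFunction{h}v(x)$.

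For the first identity in \eqref{eq:4_TranslationInvariance}, I would use the exact relation $\{u+c\geq\lambda\}=\{u\geq\lambda-c\}$ and change variables $\mu=\lambda-c$ inside the supremum, obtaining $\chambolleFunction{h}(u+c)(x)=\sup\{\lambda\mid x\in\chambolleSet{h}(\{u\geq\lambda-c\})\}=\sup\{\mu+c\mid x\in\chambolleSet{h}(\{u\geq\mu\})\}=c+\chambolleFunction{h}u(x)$; this step is purely formal and uses only the definition.

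The only point that is not automatic is the normalization $\chambolleFunction{h}(0)=0$, which I expect to be the main (if modest) obstacle. For $u\equiv 0$ the super-level set $\{0\geq\lambda\}$ equals $\closure{\Omega}$ when $\lambda\leq 0$ and $\emptyset$ when $\lambda>0$, so the claim reduces to the two endpoint identities $\chambolleSet{h}(\closure{\Omega})=\closure{\Omega}$ and $\chambolleSet{h}(\emptyset)=\emptyset$: the first makes every $\lambda\leq 0$ admissible, forcing $\chambolleFunction{h}(0)(x)\geq 0$, while the second makes every $\lambda>0$ inadmissible, forcing $\chambolleFunction{h}(0)(x)\leq 0$. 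I would establish these through the minimizer $\weaksol{h}{E}$ of \eqref{eq:4_chambolleEnergy}: for $E=\closure{\Omega}$ the signed geodesic distance $\geodis{\closure{\Omega}}$ is nonpositive on $\closure{\Omega}$, so by the monotonicity of the minimizer in its data (\Lemma{lem:Monotonicity}) together with a maximum principle one gets $\weaksol{h}{\closure{\Omega}}\leq 0$ and hence $\chambolleSet{h}(\closure{\Omega})=\{\weaksol{h}{\closure{\Omega}}\leq 0\}=\closure{\Omega}$; symmetrically, with the convention $\geodis{\emptyset}\equiv+\infty$ the fidelity term forces $\weaksol{h}{\emptyset}>0$, whence $\chambolleSet{h}(\emptyset)=\emptyset$. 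These same two identities also ensure that the supremum in \eqref{eq:4_S_h_def} runs over a nonempty set that is bounded above whenever $u$ is bounded, so that $\chambolleFunction{h}u$ is finite everywhere and the preceding arguments are meaningful.
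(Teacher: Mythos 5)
Your proposal is correct and follows essentially the same route as the paper: monotonicity of $\chambolleFunction{h}$ is deduced from the monotonicity of $\chambolleSet{h}$ (\Lemma{lem:4_ChambollesetMonotonicity}) applied to the super-level sets, and translation invariance from the identity $\{u+c\geq\lambda\}=\{u\geq\lambda-c\}$, which the paper simply declares straightforward from the definition of $\chambolleFunction{h}$. Your extra care with the normalization $\chambolleFunction{h}(0)=0$ goes beyond what the paper records and is welcome, though note that the endpoint cases rest on conventions ($\geodis{\emptyset}\equiv+\infty$, $\geodis{\closure{\Omega}}\equiv-\infty$) under which the data is no longer in $L^2(\Omega)$, so $\chambolleSet{h}(\emptyset)=\emptyset$ and $\chambolleSet{h}(\closure{\Omega})=\closure{\Omega}$ are best taken as definitional conventions consistent with monotonicity rather than outputs of the variational problem \eqref{eq:4_chambolleEnergy}.
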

\begin{proof}
    Suppose that $u\leq v$ in $\closure{\Omega}$. Then, we see that $\{u\geq \lambda\}\subset\{v\geq \lambda\}$ for every $\lambda\in\mathbb{R}$.
    Since $\chambolleSet{h}$ is monotone from \Lemma{lem:4_ChambollesetMonotonicity}, 
    we have $\chambolleSet{h}(\{u\geq\lambda\})\subset\chambolleSet{h}(\{v\geq \lambda\})$. Thus, it follows from the definition of $\chambolleFunction{h}$ that
    $\chambolleFunction{h}u\leq\chambolleFunction{h}v$ in $\closure{\Omega}$. The formula $\chambolleFunction{h}(u+c) = \chambolleFunction{h}u + c$ is straightforward {by the definition of $S_h$}.
\end{proof}

The following lemma states a relationship between $\chambolleFunction{h}$ and $\chambolleSet{h}$ which will be crucial for our study.

\begin{lem}\label{lem:4_Sh_Th}
    For every $\lambda\in\mathbb{R}^2$ and $u_0\in UC({\closure{\Omega}})$, 
    ${\chambolleFunction{h}{u_0}}(x)\geq \lambda$ holds if and only if $x\in\chambolleSet{h}(\{u_0\geq \lambda\})$.
\end{lem}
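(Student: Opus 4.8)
The plan is to prove the two implications separately, leaning on the definition of $\chambolleFunction{h}$ as a supremum of levels together with the monotonicity and continuity of $\chambolleSet{h}$ already recorded in \Lemma{lem:4_ChambollesetMonotonicity} and \Lemma{lem:ChambollesetContinuity}. Throughout I abbreviate $\sublevel{u_0}{\mu}:=\{u_0\geq\mu\}$, and I note at the outset that, since $u_0\in UC(\closure{\Omega})$ is continuous, each $\sublevel{u_0}{\mu}$ is relatively closed in $\closure{\Omega}$ --- exactly the hypothesis needed to invoke \Lemma{lem:ChambollesetContinuity} later. The easy implication is ``$x\in\chambolleSet{h}(\sublevel{u_0}{\lambda})\Rightarrow\chambolleFunction{h}u_0(x)\geq\lambda$'': in that case $\lambda$ is one of the levels $\mu$ admitted in the supremum \eqref{eq:4_S_h_def} defining $\chambolleFunction{h}u_0(x)$, so the supremum is at least $\lambda$.

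For the reverse implication the key structural remark I would establish first is that the candidate set
\[
    A_x := \{\mu\in\mathbb{R}\mid x\in\chambolleSet{h}(\sublevel{u_0}{\mu})\}
\]
is a lower set: if $\mu'\leq\mu$ then $\sublevel{u_0}{\mu}\subset\sublevel{u_0}{\mu'}$, so \Lemma{lem:4_ChambollesetMonotonicity} gives $\chambolleSet{h}(\sublevel{u_0}{\mu})\subset\chambolleSet{h}(\sublevel{u_0}{\mu'})$, and hence $\mu\in A_x$ forces $\mu'\in A_x$. It follows that $A_x$ is an interval with right endpoint $s:=\sup A_x=\chambolleFunction{h}u_0(x)$, and the task reduces to showing that $\lambda\leq s$ implies $\lambda\in A_x$.

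I would then split into two cases. When $\lambda<s$, the definition of the supremum furnishes some $\nu\in A_x$ with $\lambda<\nu$, and the lower-set property immediately gives $\lambda\in A_x$, i.e. $x\in\chambolleSet{h}(\sublevel{u_0}{\lambda})$. The borderline case $\lambda=s$ is the one I expect to be the main obstacle, because the supremum in \eqref{eq:4_S_h_def} need not a priori be attained; this is precisely where the continuity of $\chambolleSet{h}$ is indispensable. Here I would choose $\mu_n\nearrow\lambda$ with $\mu_n<\lambda$, so that each $\mu_n\in A_x$ by the lower-set property and hence $x\in\chambolleSet{h}(\sublevel{u_0}{\mu_n})$ for all $n$. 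The sets $\sublevel{u_0}{\mu_n}$ form a non-increasing sequence of relatively closed subsets of $\closure{\Omega}$, and since $\mu_n\nearrow\lambda$ one checks directly that $\bigcap_{n}\sublevel{u_0}{\mu_n}=\sublevel{u_0}{\lambda}$.

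Applying \Lemma{lem:ChambollesetContinuity} then yields
\[
    \chambolleSet{h}(\sublevel{u_0}{\lambda})=\chambolleSet{h}\Big(\bigcap_{n}\sublevel{u_0}{\mu_n}\Big)=\bigcap_{n}\chambolleSet{h}(\sublevel{u_0}{\mu_n})\ni x,
\]
so that $x\in\chambolleSet{h}(\sublevel{u_0}{\lambda})$, which closes the argument. In summary, the whole proof is elementary bookkeeping of a monotone family of super-level sets, with the single genuine difficulty --- the attainment of the supremum at $\lambda=s$ --- handled by the continuity \Lemma{lem:ChambollesetContinuity}, which is why that lemma was set up in advance.
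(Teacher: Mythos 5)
Your proof is correct and follows exactly the route the paper intends: the paper's own proof is a one-line citation of the monotonicity of $\chambolleSet{h}$, the continuity \Lemma{lem:ChambollesetContinuity}, and \cite[Lemma 4.3]{EtoGigaIshii2012}, and your argument simply writes out those ingredients in full — the easy direction from the definition of the supremum, the lower-set structure of $A_x$ via monotonicity, and the borderline case $\lambda=\sup A_x$ handled by applying the continuity lemma to $\bigcap_n\{u_0\geq\mu_n\}=\{u_0\geq\lambda\}$.
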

\begin{proof}
    The assertion is straightforward by Lemma \ref{lem:Monotonicity}, Lemma \ref{lem:ChambollesetContinuity} and \cite[Lemma 4.3]{EtoGigaIshii2012}.
\end{proof}

We need a {soliton-like} rigorous solution to the mean curvature flow with the constant contact angle condition to capture a general flow.
A candidate for such a solution is a translative soliton:

\begin{dfn}[Translative soliton]\label{def:4_TS}
    A function $f:\closure{\Omega}\to\mathbb{R}$ is called a translative soliton
    if there exist a constant $k\in(-1,1)$ and a function $\solitonprofile{k}{x'}$ on some subset $\tilde{\Omega}\subset\mathbb{R}^{d-1}$
    such that $f(x',x_d) = \solitonprofile{k}{x'}- x_d$ holds and $f$ solves the following partial differential equation:
    \begin{equation}\label{eq:4_NeumannProblem}
        \begin{cases}
            w - h\operatorname{div}{\grad{\anisotropy{}{}{\grad{w}}}} = \geodis{F}\ \ \mbox{in}\ \ \Omega, \\
            \innerproductSingle{\grad{w}}{\normalConv_\Omega} + k|\grad{w}| = 0\ \ \mbox{on}\ \ \pOmega,
        \end{cases}
    \end{equation}
    where $F := \{(x',x_d)\mid x_d \leq\solitonprofile{k}{x'} + (\arctan{\alpha})h\}$ with 
    $\alpha := \arccos{k}$.
\end{dfn}

\begin{rem}
    A translative soliton is often called either a translator or a translating soliton in the literature, and
    it originally means a rigorous solution to the mean curvature flow with a contact angle condition 
    which can be represented as a graph over an ambient space.
    We note that the problem $\eqref{eq:4_NeumannProblem}$ is a discrete variant of {level-set} equations for the mean curvature flow.
    The translative soliton which we treat here corresponds to a bowl soliton which is restricted to a cylindrical domain.
    For a summary of existing works for translators, see e.g. \cite[\S 4]{HoffmanIlmanenMartinWhite2021}.
\end{rem}

To prove Theorem \ref{thm:4_S_h_satisfies_MTC}, we need an assumption and several lemmas:

\begin{lem}\label{lem:4_existence_ts}
    For every point $z\in\Omega$ and a vector $\mathbf{v}$, there exists a translative soliton which evolves
    in the direction either $\mathbf{v}$ or $-\mathbf{v}$ and includes $z$ in its level set.
\end{lem}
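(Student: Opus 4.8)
The plan is to realize the soliton as a suitably placed rotationally symmetric translating graph (a bowl soliton restricted to a cylinder) and to exploit the invariance of \eqref{eq:4_NeumannProblem} under shifts along the graph axis. First I would pass to a normalized frame: because the interior operator in \eqref{eq:4_NeumannProblem} is isotropic and the whole construction is invariant under rigid motions of $\mathbb{R}^d$, I rotate coordinates so that the prescribed vector $\mathbf{v}$ is parallel to the $x_d$-axis and write $x = (x', x_d)$ with $x' \in \mathbb{R}^{d-1}$. A translative soliton in the sense of Definition \ref{def:4_TS} then has the form $f(x', x_d) = \solitonprofile{k}{x'} - x_d$, whose zero level set is the graph $\{x_d = \solitonprofile{k}{x'}\}$, so the problem becomes one of producing an admissible profile $\solitonprofile{k}{\cdot}$ and placing its graph through $z$.

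For a fixed constant contact angle $k \in (-1,1)$ I would look for a rotationally symmetric profile $\solitonprofile{k}{x'} = \psi(|x'|)$. Since $\anisotropy{}{}{p} = |p|$ gives $\grad{\anisotropy{}{}{\grad{f}}} = \grad{f}/|\grad{f}|$, the interior equation of \eqref{eq:4_NeumannProblem} reduces to a one-dimensional (radial) problem for $\psi$ that couples the scaled mean curvature $h\operatorname{div}(\grad{f}/|\grad{f}|)$ of the graph to the signed geodesic distance $\geodis{F}$; solving it with $\psi'(0)=0$ yields the bowl profile. The sign of $k$ fixes whether the graph opens toward $+\mathbf{v}$ or $-\mathbf{v}$, and therefore the direction in which the scheme displaces its level set; I choose this sign to match the requested direction, which is precisely why the statement permits either $\mathbf{v}$ or $-\mathbf{v}$. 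I then restrict to a cylinder $\tilde{\Omega} \times \mathbb{R}$ with $\tilde{\Omega}$ a ball $B_R$, and select $R$ so that the boundary condition $\innerproductSingle{\grad{f}}{\normalConv_\Omega} + k|\grad{f}| = 0$ holds on $\partial(\tilde{\Omega} \times \mathbb{R})$, which amounts to the algebraic relation $\psi'(R) = -k/\sqrt{1-k^2}$ for the slope.

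To put $z$ on the soliton I would use the $x_d$-translation invariance of \eqref{eq:4_NeumannProblem}: the cylinder $\tilde{\Omega} \times \mathbb{R}$, the operators $\operatorname{div}$ and $\grad{\anisotropy{}{}{\cdot}}$, and the geodesic distance to $F$ are all invariant under shifts in $x_d$, so whenever $f$ solves \eqref{eq:4_NeumannProblem} the shifted function $\solitonprofile{k}{x'} + c - x_d$ does as well for every $c \in \mathbb{R}$. Centering the cylinder axis over $z'$ and taking $c = z_d - \psi(0)$ gives $f(z) = \solitonprofile{k}{z'} - z_d = 0$, so $z$ lies in the zero level set. This produces a translative soliton evolving in the chosen direction and passing through $z$.

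The step I expect to be the main obstacle is the rigorous verification that the geometric bowl profile actually solves the full discrete problem \eqref{eq:4_NeumannProblem}, rather than only the continuous translator equation. I would carry this out through the subdifferential characterization of $\subdiff{C_\beta}{\cdot}$ with constant $\beta \equiv k$ (\cite[Theorem 2]{EtoGiga2023}), which converts the inclusion behind \eqref{eq:4_NeumannProblem} into the boundary slope relation above together with an interior identity that pins the displacement of $\partial F$ at exactly $(\arctan\alpha)h$ with $\alpha = \arccos k$. The delicate computations are the evaluation of $\geodis{F}$ near the graph and the matching of the graph's scaled mean-curvature term against it, so that $f$ is an exact short-time solution and not merely an approximate one.
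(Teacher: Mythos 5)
Your construction has a genuine gap at the point you treat as routine: the boundary condition. A rotationally symmetric bowl profile $\solitonprofile{k}{x'}=\psi(|x'|)$ over a ball $B_R$ satisfies the contact-angle relation $\psi'(R)=-k/\sqrt{1-k^2}$ only on the round cylinder $\partial B_R\times\mathbb{R}$. But Definition \ref{def:4_TS} requires $f$ to satisfy $\innerproductSingle{\grad{f}}{\normalConv_\Omega}+k|\grad{f}|=0$ on $\pOmega$ itself, and this is essential for how the lemma is used later (e.g.\ in \eqref{eq:4_approx_estimate_17}--\eqref{eq:4_approx_estimate_18_2} the soliton is compared with $\varphi$ on all of $\closure{\Omega}$ and must be an exact solution of the constant-angle boundary problem on $\pOmega$). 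Since $\Omega$ is a general bounded convex domain, not a round cylinder, choosing a single radius $R$ does not produce an admissible soliton: one needs a translator graph over a cross-section of $\Omega$ in the direction $\mathbf{v}$ whose prescribed contact angle holds along the actual boundary. That existence statement is a nontrivial elliptic/parabolic result and is precisely what the paper invokes; its entire proof is the citation to Zhou \cite[Corollary 4.2]{Zhou2018}, with the remark that for smooth bounded $\Omega$ one approximates by a cylindrical domain. Your radial ansatz cannot deliver this for non-rotationally-symmetric cross-sections, so the step "select $R$ so that the boundary condition holds on $\partial(\tilde{\Omega}\times\mathbb{R})$" does not close the argument for the domain the lemma is actually about.

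By contrast, the part you flag as the main obstacle --- verifying that the geometric translator solves the discrete problem \eqref{eq:4_NeumannProblem} via the characterization of $\subdiff{C_\beta}{\cdot}$ --- is sound and is essentially how the paper handles the explicit case: the lemma following this one carries out exactly that computation (constructing the calibrating vector field $\normalConv$ and checking the conditions of \cite[Theorem 2]{EtoGiga2023}), but only for $d=2$ and a cylindrical $\Omega_b$, where the profile is the explicit grim-reaper-type function \eqref{eq:4_TS}. Your translation-invariance argument for placing $z$ on the zero level set and your sign choice of $k$ to select the direction $\pm\mathbf{v}$ are both fine. To repair the proof you would either have to restrict to cylindrical domains (losing the generality the lemma needs), or import an existence theorem for translators with prescribed contact angle over general convex cross-sections, which is what the paper does.
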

\begin{proof}
    This is straightforward from the result by Zhou \cite[Collorary 4.2]{Zhou2018}.
\end{proof}

If $\Omega$ is a smooth bounded domain, then the above lemma can be proved by approximating $\Omega$ with a cylindrical domain.
We can compute an exact form of $\solitonprofile{\beta}{x'}$ 
if $d = 2$ and $\Omega$ is a cylindrical domain as follows:

\begin{lem}
    For each $\alpha > 0$, we define a function $u_\alpha:[-1,1]\times[0,T]\to\mathbb{R}$ by
    \begin{equation}\label{eq:4_TS}
        u_\alpha(x,t) := \frac{1}{\arctan{\alpha}}\log{\left|\cos{((-\arctan{\alpha})x)}\right|} + (-\arctan{\alpha})t.
    \end{equation}
    Let $\Omega_b := [-1,1]\times[-b,b]$ for some large $b>0$. Namely, $\Omega_b$ is supposed to be a long cylinder. 
    Set $E_t:=\{(x,y)\in\Omega_b\ \mid\ y\leq u_\alpha(x,t)\}$ for each $t\geq 0$.
    Then, it holds that $\chambolleSet{h}(E_t) = E_{t+h}$ for every $t\geq 0$ whenever $-(t+h)/\arctan{\alpha}\geq -b$
    and $\alpha = -\beta / \sqrt{1-\beta^2}$. In other words, the capillary Chambolle type scheme yields
    the translative soliton.
\end{lem}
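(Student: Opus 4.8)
The plan is to produce the minimizer explicitly and read off its zero sublevel set. By \Proposition{prop:4_dual_argument_2} the minimizer $\weaksol{h}{E_t}$ of \eqref{eq:4_chambolleEnergy} with data $g=\geodis{E_t}$ solves the Euler--Lagrange inclusion \eqref{eq:4_EulerLagrange}, and by the characterization of $\subdiff{C_\beta}{\cdot}$ recalled from \cite[Theorem 2]{EtoGiga2023} this inclusion is equivalent to the Neumann problem \eqref{eq:4_NeumannProblem} with boundary coefficient $k=\beta$ and $\geodis{F}=\geodis{E_t}$; moreover $\chambolleSet{h}(E_t)=\{\weaksol{h}{E_t}\le 0\}$. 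Hence it suffices to exhibit one function $w$ on $\Omega_b$ that solves \eqref{eq:4_NeumannProblem} against the data $\geodis{E_t}$ and whose zero sublevel set is exactly $E_{t+h}$; uniqueness of the minimizer then forces $w=\weaksol{h}{E_t}$ and yields $\chambolleSet{h}(E_t)=E_{t+h}$.

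To construct $w$, I would exploit the vertical-translation symmetry of the cylinder $\Omega_b$ and look for a soliton whose level curves are vertical translates of a single profile $y=\psi(x)$. Under this ansatz the interior equation of \eqref{eq:4_NeumannProblem} collapses to the grim--reaper ODE, whose integral is $\psi(x)=\tfrac{1}{\omega}\log|\cos(\omega x)|$ with $\omega:=\arctan\alpha$, i.e. the spatial part of \eqref{eq:4_TS}. The existence of the genuine graph-type solution of \eqref{eq:4_NeumannProblem} carrying this profile (as opposed to the naive affine function $y-\psi(x)+\mathrm{const}$, which does not solve the equation against distance-type data) is exactly what Definition~\ref{def:4_TS} and \Lemma{lem:4_existence_ts} supply. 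The single identity driving the computation is $\psi''=-\omega\,(1+(\psi')^2)$, which gives $\divergence(\grad{w}/|\grad{w}|)=\omega/\sqrt{1+(\psi')^2}$ along each level curve; geometrically, the curvature of $\Gamma_{t+h}$ equals the soliton speed $\omega$.

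Next I would verify the oblique boundary condition, which is where the relation $\alpha=-\beta/\sqrt{1-\beta^2}$ enters. At $x=1$ one computes $\psi'(1)=-\tan\omega=-\alpha$, so $\grad{w}$ is parallel to $(\alpha,1)$ with $|\grad{w}|=\sqrt{1+\alpha^2}=1/\sqrt{1-\beta^2}$; since $\nu_\Omega=(1,0)$ there, $\innerproductSingle{\grad{w}}{\nu_\Omega}+\beta|\grad{w}|$ is proportional to $\alpha+\beta/\sqrt{1-\beta^2}$, which vanishes precisely when $\alpha=-\beta/\sqrt{1-\beta^2}$. By the evenness of $\psi$ the same holds at $x=-1$, so $w$ meets both walls at the prescribed contact angle.

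The remaining and most delicate step is to confirm that the data generated on the right-hand side by this soliton is exactly $\geodis{E_t}$; equivalently, that the set $F$ attached to the soliton in Definition~\ref{def:4_TS} coincides with $E_t$ once the forward shift $(\arctan\alpha)h=\omega h$ and the orientations of the sublevel sets are matched. This is the step I expect to be the main obstacle, because it requires control of the signed \emph{geodesic} distance $\geodis{E_t}$ to a curved front in the strip, where the side walls $\{x=\pm1\}$ could a priori shorten the distance. The resolution should again be the contact-angle condition: it forces the normal rays of $\Gamma_t$ to meet the walls orthogonally, so that $\geodis{E_t}$ is realized along genuine normals and the interior equation of \eqref{eq:4_NeumannProblem} holds throughout $\Omega_b$, provided $\Gamma_{t+h}$ does not reach the caps $\{y=\pm b\}$ --- which is exactly the content of the hypothesis $-(t+h)/\arctan\alpha\ge -b$. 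Once this identification is secured, uniqueness of the minimizer gives $\weaksol{h}{E_t}=w$ and hence $\chambolleSet{h}(E_t)=\{w\le 0\}=E_{t+h}$.
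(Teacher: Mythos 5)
Your overall strategy --- exhibit the minimizer of \eqref{eq:4_chambolleEnergy} explicitly via the Euler--Lagrange characterization and read off its zero sublevel set, invoking uniqueness --- is the same as the paper's, and your boundary computation at $x=\pm1$ identifying $\alpha=-\beta/\sqrt{1-\beta^2}$ with the contact-angle compatibility is correct. The construction of the candidate $w$, however, contains an obstruction that you half-notice but do not resolve. If every level curve of $w$ is a vertical translate of $y=\psi(x)$, then $\divergence(\grad{w}/|\grad{w}|)$ is the curvature of that translate and depends on $x$ only, so the interior equation of \eqref{eq:4_NeumannProblem} forces $\geodis{E_t}(x,y)+h\,\omega/\sqrt{1+\psi'(x)^2}$ to be a function of $y-\psi(x)$ alone; differentiating in $y$ and evaluating on $\Gamma_t$ itself would give that $\partial_y \geodis{E_t}(x,\psi(x))=1/\sqrt{1+\psi'(x)^2}$ is constant in $x$, which is false. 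The level sets of $\geodis{E_t}$ (here equal to the Euclidean signed distance, since $\Omega_b$ is convex) are normal offsets of $\Gamma_t$, not vertical translates, so no $w$ of the form you posit solves \eqref{eq:4_NeumannProblem} against the data $\geodis{E_t}$. Thus the ``delicate step'' you flag at the end is not a technical verification but the point where the argument breaks; and the mechanism you propose for it is also wrong --- the contact angle condition makes $\Gamma_t$ meet the walls at angle $\arccos\beta$, it does not make the normal rays of $\Gamma_t$ orthogonal to the walls (that happens only for $\beta=0$), and convexity of the strip, which is what identifies geodesic and Euclidean distance, does not make the distance function's level sets vertical translates of $\Gamma_t$.

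The paper proceeds differently: it does not solve the PDE classically at all. It extends the unit normal $\nu$ of $\partial E_t$ to $\Omega_b$ as a vertically constant field, sets $w:=\geodis{E_t}-h\divergence\nu$, and certifies minimality by checking that this extended $\nu$ is an admissible calibration in the characterization of $\subdiff{C_\beta}{w}$ from \cite[Theorem 2]{EtoGiga2023} (unit bound, trace $[\nu\cdot\nu_\Omega]=-\beta$ on the walls, pairing with $Dw$); the identification $\{w\le0\}=E_{t+h}$ is then read off from the fact that $u_\alpha$ solves \eqref{eq:4_Target} exactly and translates downward at speed $\arctan\alpha$. To repair your argument you would need either to produce such a calibrating vector field, or to verify directly --- exactly, not merely to leading order in $h$ --- that the zero level set of your candidate coincides with the vertical translate $\partial E_{t+h}$; neither is done in the proposal.
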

\begin{proof}
    Let $\normalConv$ be the unit normal vector field of $\partial E_t$
    and suppose that $\normalConv$ is extended to whole $\Omega$ by $\normalConv(x,y) := \normalConv(x,u_\alpha(x,t))$ for all $y\in[-b,b]$.
    Then, we define $w := \geodis{E_t} - h\operatorname{div}\normalConv =  \geodis{E_t} - h\kappa$.
    We deduce from the assumptions that $\normalConv$ satisfies the conditions 
    on $z\in L^\infty(\Omega;\mathbb{R}^2)$ in \cite[Theorem 2]{EtoGiga2023}.
    Therefore, $w$ is a unique minimizer of $\chambolleEnergyFour{u}$ with the data $E_t$.
    We easily observe that the function $u_\alpha$ defined by $\eqref{eq:4_TS}$ 
    is an exact solution to $\eqref{eq:4_Target}$ with $\theta\equiv\arccos{\beta}$. 
    This implies that evolving $E_t$ 
    in the normal direction by its curvature 
    is equivalent to translate it downward (parallel to the $y$-axis) 
    at the speed $\arctan{\alpha}$. Hence, the resulting $\chambolleSet{h}(E_t)$ 
    is nothing but $E_{t+h}$.
\end{proof}

{
    Let us prove a key result to show the consistency of the scheme $\chambolleFunction{h}$:
}

\begin{prop}\label{prop:4_existence_super_sub_solution}
    Let $\varphi\in C^2(\closure{\Omega})$ and $h>0$. 
    Assume that there exist constants $\betahigh > 0$ and $\betalow < 0$ such that $-1 < \betalow\leq\beta\leq\betahigh <1$.
    For each $\mu\in\mathbb{R}$, 
    we set $\sublevel{\varphi}{\mu} := \{x\in{\closure{\Omega}}\mid \varphi(x)\geq\mu\}$.
    Assume that $\grad{\varphi}(z)\neq\zerovec$ for some $z\in\closure{\Omega}$.
    If either $z\in\Omega$ or $z\in\pOmega$ and 
    $\innerproductSingle{\grad{\varphi}(z)}{\normalConv_\Omega(z)} + \beta{(z)}|\grad{\varphi}(z)| > 0$ (resp, $\innerproductSingle{\grad{\varphi}(z)}{\normalConv_{\Omega}(z)} + \beta{(z)}|\grad{\varphi}(z)| < 0$),
    then, up to a modification of $\varphi$ in a neighborhood of $z$, 
    the problem \eqref{eq:4_Neumann_problem_master} with $g := \geodis{\sublevel{\varphi}{\varphi(z)}}$ has a viscosity supersolution $\supersolution{w}$ 
    (resp, subsolution $\subsolution{w}$)
    satisfying the following condition:
    \begin{itemize}
        \item For every $\varepsilon > 0$, there exist $\delta > 0$, $h_0 > 0$, $r>0$ and $C > 0$ such that
        \begin{align}
            &\supersolution{w} \leq \geodis{\sublevel{\varphi}{\lambda}} - h\kappa_{\sublevel{\varphi}{\lambda}} + 3\varepsilon h\ \ \mbox{in}\ \ U_{\delta,r}\label{eq:4_approx_curvature_super}\\
            &(\mbox{resp,}\ \ \subsolution{w} \geq \geodis{\sublevel{\varphi}{\lambda}} - h\kappa_{\sublevel{\varphi}{\lambda}} - 3\varepsilon h\ \ \mbox{in}\ \ U_{\delta,r})\label{eq:4_approx_curvature_sub}
        \end{align}
        for any $h\in(0,h_0)$ and for any $\lambda\in\mathbb{R}$ with $|\varphi(z) - \lambda|\leq C\sqrt{h}$, where
        \begin{equation*}
            U_{\delta,r} := \{x\in\Omega\mid x\in B(z,\delta)\ \mbox{and}\ |\geodis{\sublevel{\varphi}{\lambda}}(x)|< r\}.
        \end{equation*}
        \item In particular, it holds that
        \begin{equation*}
            \left|\weaksol{h}{\sublevel{\varphi}{\lambda}} - \geodis{\sublevel{\varphi}{\lambda}} + h\kappa_{\sublevel{\varphi}{\lambda}}\right|\leq \varepsilon h\ \ \mbox{in}\ \ U_{\delta,r}.
        \end{equation*}
    \end{itemize}
\end{prop}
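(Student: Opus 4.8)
The plan is to transfer the exact identity available for translative solitons, namely $\weaksol{h}{F}=\geodis{F}-h\kappa$, to the superlevel sets of a suitably modified $\varphi$. Since only the local behaviour of $\weaksol{h}{\sublevel{\varphi}{\lambda}}$ near $z$ is needed, I first localize. Because $\grad{\varphi}(z)\neq\zerovec$, the hypersurface $\{\varphi=\varphi(z)\}$ is smooth near $z$, and I would replace $\varphi$ in a neighbourhood of $z$ by $\tilde\varphi$ whose superlevel sets are level sets of a translative soliton, arranged to agree with $\varphi$ at $z$ up to the order that fixes $\geodis{\sublevel{\varphi}{\varphi(z)}}$ and $\kappa_{\sublevel{\varphi}{\varphi(z)}}$ (and the sign of $B(z,\grad{\varphi}(z))$). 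This is the modification permitted in the statement, and it reduces matters to the model geometry of a soliton, where the discrete curvature motion is known exactly.

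For the interior alternative $z\in\Omega$ the boundary condition is inactive, so a translative soliton (equivalently a sphere as in \cite{EtoGigaIshii2012}) obtained from Lemma~\ref{lem:4_existence_ts} and Zhou~\cite[Corollary 4.2]{Zhou2018} serves as an exact barrier: its zero level set passes through $z$ and touches $\{\varphi=\lambda\}$ from the chosen side. On the soliton, the computation underlying the earlier cylindrical lemma, via the characterization of $\subdiff{C_\beta}{\cdot}$ in \cite[Theorem~2]{EtoGiga2023}, gives $\weaksol{h}{F}=\geodis{F}-h\kappa$, so the curvature-corrected distance is attained identically. A one-sided geometric comparison of $\sublevel{\varphi}{\lambda}$ with the soliton's superlevel set, together with the monotonicity of $g\mapsto\weaksol{h}{g}$ (Lemma~\ref{lem:Monotonicity}), then yields the supersolution estimate \eqref{eq:4_approx_curvature_super} from one side and the subsolution estimate \eqref{eq:4_approx_curvature_sub} from the other, with an error controlled by the $C^2$-closeness of the two hypersurfaces over $U_{\delta,r}$, hence $\leq 3\varepsilon h$ once $\delta$ and $r$ are small.

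The delicate alternative is $z\in\pOmega$, where the barrier must also respect the oblique condition in \eqref{eq:4_Neumann_problem_master}. Here I would take the soliton with a \emph{constant} contact angle $k\in(-1,1)$ slightly displaced from $\beta(z)$. Since it solves \eqref{eq:4_NeumannProblem}, i.e. $\innerproductSingle{\grad{\supersolution{w}}}{\normalConv_\Omega}+k|\grad{\supersolution{w}}|=0$ on $\pOmega$, its defect for the true boundary operator is $(\beta(x)-k)|\grad{\supersolution{w}}|$; invoking $\|\beta\|_\infty<1$ and the local near-constancy $\beta(x)=\beta(z)+O(\delta)$, I can choose $k\in(\betalow,\betahigh)$ so that this defect keeps the correct sign throughout $U_{\delta,r}$, making the comparison function a genuine viscosity super- (resp. sub-) solution. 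The strict inequality $\innerproductSingle{\grad{\varphi}(z)}{\normalConv_\Omega(z)}+\beta(z)|\grad{\varphi}(z)|>0$ (resp. $<0$) is exactly what leaves room to pick such a $k$ with $\beta(z)>k$ (resp. $\beta(z)<k$) and guarantees the soliton meets $\pOmega$ transversally in the right orientation, while \cite[Theorem~2]{EtoGiga2023} certifies that it solves the correct inclusion.

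Finally, for the uniformity over $\lambda$ with $|\varphi(z)-\lambda|\leq C\sqrt{h}$, I would use that the nearby level sets $\{\varphi=\lambda\}$ are $C^2$-close normal translates of $\{\varphi=\varphi(z)\}$, so a one-parameter family of translated solitons provides barriers and $\kappa_{\sublevel{\varphi}{\lambda}}$ varies only by $O(\sqrt{h})=o(1)$; fixing $C$ first and then $\delta,r,h_0$ absorbs every error into $3\varepsilon h$. The pointwise bound in the second bullet then follows by squeezing $\weaksol{h}{\sublevel{\varphi}{\lambda}}$ between the matching super- and subsolution via the comparison principle of Lemma~\ref{lem:4_comparison_principle} (equivalently, the monotonicity and uniqueness of the minimizer). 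The main obstacle I anticipate is precisely this boundary step: forcing a single constant-angle soliton to approximate $\sublevel{\varphi}{\lambda}$ to curvature order $O(h)$ \emph{and} to satisfy the one-sided oblique inequality for the variable $\beta$ throughout $U_{\delta,r}$, not merely at $z$, uniformly over the $\sqrt{h}$-window of $\lambda$, which is what makes the joint hypotheses $\|\beta\|_\infty<1$ and the strict sign of $B$ at $z$ indispensable.
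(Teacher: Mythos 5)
Your plan has a genuine gap, and it concerns the central mechanism. You propose to replace $\varphi$ \emph{near} $z$ by a function whose superlevel sets are soliton level sets and then exploit the exact identity $\weaksol{h}{F}=\geodis{F}-h\kappa_F$ on the soliton. Two things go wrong. First, the ``modification of $\varphi$'' permitted by the statement must leave $\varphi$ \emph{unchanged} on $U_{\delta,r}$: the estimate is later fed into the consistency proof (Theorem \ref{thm:4_generator_2}), which needs $\tilde\varphi=\varphi$ in $U_{\delta,r}$ so that $\geodis{\sublevel{\varphi}{\lambda}}$, $\kappa_{\sublevel{\varphi}{\lambda}}$ and ultimately $\hess\varphi(z)$ are those of the original test function. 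The paper's modification goes the other way: $\varphi$ is glued to a soliton \emph{outside} $B(z,\delta)$ via a cutoff, precisely so that a global barrier satisfying the oblique boundary condition exists, while the data near $z$ are untouched. Second, a translative soliton cannot serve as a curvature-accurate local barrier: its curvature is slaved to its orientation and translation speed, so it cannot be made to osculate an arbitrary $C^2$ level set $\{\varphi=\lambda\}$ to second order on a neighborhood, and a one-sided tangency at $z$ only gives $\geodis{F}-\geodis{\sublevel{\varphi}{\lambda}}=O(|x-z|^2)=O(\delta^2)$ over $U_{\delta,r}$. Since $\delta$ and $r$ must be chosen \emph{before} $h$ (the statement requires the bound for all $h\in(0,h_0)$ with $\delta,r$ fixed), an $O(\delta^2)$ error cannot be absorbed into $3\varepsilon h$. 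Moreover the exact identity $\weaksol{h}{F}=\geodis{F}-h\kappa$ is only established in the paper for $d=2$ in a cylinder; in general the soliton is used only as a classical super/subsolution with constant angle $\betalow$ or $\betahigh$, not as an exactly solvable model.

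What the paper actually does is construct the sharp local barrier directly from the level set of $\varphi$ itself: $\tilde{w}:=\geodis{\sublevel{\varphi}{\mu}}-h\tilde\eta\,\kappa_{\tau(\varepsilon)}+(1-\tilde\eta)h+2\varepsilon h$, with $\kappa_{\tau}$ a mollification of $\kappa_{\sublevel{\varphi}{\mu}}$, verified to be a classical supersolution as in \cite[Proposition 5.2]{EtoGigaIshii2012}; the soliton $\soliton{\mu+h,\betalow}$ enters only as a global supersolution (its superlevel set sits inside $\sublevel{\varphi}{\mu}$ and it satisfies the boundary inequality because $\beta\geq\betalow$), and the two are combined by $\supersolution{w}:=\min\{\tilde{w},\soliton{\mu+h,\betalow}\}$ on $\closure{U_{\delta,r}}$, which remains a viscosity supersolution. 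The boundary case is then reduced to the interior one by noting that the strict sign of $\innerproductSingle{\grad\varphi(z)}{\normalConv_\Omega(z)}+\beta(z)|\grad\varphi(z)|$ lets a soliton level set be slipped inside $\sublevel{\varphi}{\mu}$ near $z$ --- a point your sketch does capture, but the curvature-order accuracy on $U_{\delta,r}$ still has to come from the mollified-curvature barrier, not from the soliton.
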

\begin{proof}
    The proof is a modification of \cite[Proposition 5.2]{EtoGigaIshii2012}.
    First, we treat the case where $z\in\Omega$. 
    We define $\soliton{\mu,\beta}(x) := \solitonprofile{\beta}{x'} - x_d + \mu$ for each $x\in\closure{\Omega}$.
    Since $\grad{\varphi}(z) \neq \zerovec$,
    there exists a $\delta > 0$ for which $\{\varphi = \mu\}\cap B(z,3\delta)$ is a smooth hypersurface.
    We introduce a smooth cutoff function $\eta:\Omega\to [0,1]$ satisfying
    \begin{equation*}
        \eta(x) = \begin{cases}
            1\ \ \mbox{if}\ \ x\in B(z,\delta),\\
            0\ \ \mbox{if}\ \ x\in \Omega\backslash\closure{B(z,2\delta)}.
        \end{cases}
    \end{equation*}
    Then, we replace $\varphi$ with $(1-\eta)\soliton{\mu,\beta} + \eta\varphi$. We still write it as $\varphi$ for simplicity.
    We observe that $\soliton{\mu+h,\betalow}$ is a classical supersolution to \eqref{eq:4_Neumann_problem_master} 
    with $g:=\geodis{\sublevel{\varphi}{\mu}}$.
    Indeed, we have
    \begin{equation*}
        \soliton{\mu+h,\betalow} - h\operatorname{div}{\grad\anisotropy{}{}{\grad{\soliton{\mu+h,\betalow}}}} = \geodis{\sublevel{\soliton{\mu+h,\betalow}}{\mu}} \geq \geodis{\sublevel{\varphi}{\mu}}\ \ \mbox{in}\ \ \Omega.
    \end{equation*}
    Here, we note that $\sublevel{\soliton{\mu+h,\betalow}}{\mu}\subset\sublevel{\varphi}{\mu}$ hence $\geodis{\sublevel{\soliton{\mu+h,\betalow}}{\mu}}\geq \geodis{\sublevel{\varphi}{\mu}}$.
    Moreover, we derive
    \begin{equation*}
        \innerproductSingle{\grad{\soliton{\mu+h,\betalow}}}{\normalConv_\Omega} + \beta|\grad{\soliton{\mu+h,\betalow}}| \geq \innerproductSingle{\grad{\soliton{\mu+h,\betalow}}}{\normalConv_\Omega} + \betalow|\grad{\soliton{\mu+h,\betalow}}| = 0\ \ \mbox{on}\ \ \pOmega.
    \end{equation*}
    We shall construct a viscosity supersolution to $\eqref{eq:4_Neumann_problem_master}$ in a neighborhood of $z$.
    To this end, we introduce a smooth cutoff function $\tilde{\eta}:\Omega\to[0,1]$ satisfying:
    \begin{equation*}
        \begin{array}{cc}
            \tilde{\eta}(x) = \begin{cases}
                1\ \ \mbox{if}\ \ |\geodis{\sublevel{\varphi}{\mu}}(x)|\leq r,\\
                0\ \ \mbox{if}\ \ |\geodis{\sublevel{\varphi}{\mu}}(x)|\geq 2r,
            \end{cases} &
            \|\grad{\tilde{\eta}}\|_\infty + \|\hess{\tilde{\eta}}\|_\infty\leq L,
        \end{array}
    \end{equation*}
    where $L>0$ is independent of $\varepsilon,h,$ and $r$.
    Moreover, suppose that $\tau(\varepsilon)\downarrow 0$ as $\varepsilon\downarrow 0$.
    Then, we define
    \begin{equation*}
        \tilde{w} := \geodis{\sublevel{\varphi}{\mu}} - h\tilde{\eta}\kappa_{\tau(\varepsilon)} + (1-\tilde{\eta})h + 2\varepsilon h.
    \end{equation*}
    Here, we have used the notation that $\kappa_\tau := \rho_\tau * \kappa_{\sublevel{\varphi}{\mu}}$ with the standard mollifying kernel $\rho_\tau$.
    Take $\tau(\varepsilon)$ so small that $\|\kappa_{\tau(\varepsilon)} - \kappa_{\sublevel{\varphi}{\mu}}\|_{C(\sublevel{\varphi}{\mu})} < \varepsilon$.
    Then, as discussed in \cite[Proposition 5.2]{EtoGigaIshii2012}, the function $\tilde{w}$ turns out to be
    a classical supersolution of $\eqref{eq:4_Neumann_problem_master}$ with $g:=\geodis{\sublevel{\varphi}{\mu}}$ in $U_{\delta,r}$.
    In terms of $\soliton{\mu+h,\betalow}$ and $\tilde{w}$, we define
    \begin{equation*}
        \supersolution{w} := \begin{cases}
            \min\{\tilde{w}, \soliton{\mu+h,\betalow}\}\ \ \mbox{in}\ \ \closure{U_{\delta,r}},\\
            \soliton{\mu+h,\betalow}\ \ \mbox{in}\ \ \closure{\Omega}\backslash U_{\delta,r}.
        \end{cases}
    \end{equation*}
    Since viscosity supersolutions are closed under taking minimum, we see that
    $\supersolution{w}$ is also a viscosity supersolution of $\eqref{eq:4_Neumann_problem_master}$ with $g:=\geodis{\sublevel{\varphi}{\mu}}$.
    Thus, we deduce that
    \begin{equation*}
        \supersolution{w} \leq \tilde{w} = \geodis{\sublevel{\varphi}{\mu}} - h\kappa_{\tau(\varepsilon)} + 2\varepsilon h \leq \geodis{\sublevel{\varphi}{\mu}} - h\kappa_{\sublevel{\varphi}{\mu}} + 3\varepsilon h\ \ \mbox{in}\ \ U_{\delta,r}.
    \end{equation*}
    Here, we should take $\delta > 0$ so small that $B(z,\delta)\subset\{|d_{\sublevel{\varphi}{\mu}}| < r\}$ if necessary.
    Consequently, we derive the desired $\supersolution{w}$. The comparison principle for viscosity solution{s}
    implies $\weaksol{h}{\sublevel{\varphi}{\mu}}\leq \supersolution{w}$ and hence
    \begin{equation*}
        \weaksol{h}{\sublevel{\varphi}{\mu}} \leq \tilde{w} = \geodis{\sublevel{\varphi}{\mu}} - h\kappa_{\tau(\varepsilon)} + 2\varepsilon h \leq \geodis{\sublevel{\varphi}{\mu}} - h\kappa_{\sublevel{\varphi}{\mu}} + 3\varepsilon h\ \ \mbox{in}\ \ U_{\delta,r}.
    \end{equation*}
    If $z\in\pOmega$, $\varphi$ is already a supersolution of the second equality of $\eqref{eq:4_Neumann_problem_master}$.
    Moreover, we see that the hypersurface $\{\varphi = \mu\}\cap B(z,3\delta)$ intersects 
    $\pOmega$ with the angle larger than $\arccos{\beta}$.
    Thus, we can find a translative soliton whose level set is included in $\sublevel{\varphi}{\mu}$ 
    in a neighborhood of $z$. Hence, the whole argument for $z\in\Omega$ will work. A desired viscosity subsolution 
    can be obtained in the same manner. We conclude the proof.
\end{proof}

\begin{rem}
    Let us mention difference from the researches \cite{EtoGigaIshii2012,EtoGigaIshii2012_2} 
    regarding construction of a sub- and supersolution which approximate a solution of 
    $\eqref{eq:4_Neumann_problem_master}$ in the case where $\Omega = \mathbb{R}^d$.
    Note that the boundary condition in \eqref{eq:4_Neumann_problem_master} vanishes due to $\pOmega$ is an empty set.
    Therein, the hypersurface $\{\varphi = \mu\}$ was approximated with the help of an open bounded set $V_1\subset\mathbb{R}^d$
    whose boundary is tangent to $\{\varphi = \mu\}$. Then, the function $\geodis{V_1}$ was bounded
    by rigorous solutions of $\eqref{eq:4_Neumann_problem_master}$ with $\Omega := \mathbb{R}^d$, $g := \geodis{B}$ and a ball $B$. 
    This solution was explicitly computed in \cite[\S B]{CasellesChambolle2006}.
    However, this result is not available in our case due to the boundary condition.
    We are forced to modify a test function $\varphi$ to cope the boundary condition. But, we should notice that
    the definition of viscosity solution{s} only uses local information of $\varphi$. 
    Thus, this modification does not affect the following discussion.
\end{rem}



The following lemma establishes a kind of monotonicity of the scheme $\chambolleFunction{h}$ with respect
to the contact angle function $\beta$:

\begin{lem}\label{lem:4_monotonicity_sh_beta}
    Suppose that $\beta_1:\pOmega\to[-1,1]$ and $\beta_2:\pOmega\to[-1,1]$ satisfy $\beta_1\leq \beta_2$ on $\pOmega$.
    Let $\chambolleFunction{h,b}$ be the associated function operator which is induced from the solution
    to \eqref{eq:4_Neumann_problem_master} with $\beta := b$ and $g = \geodis{E}$ for each function $b:\pOmega\to[-1,1]$.
    Then, it holds that 
    \begin{equation}\label{eq:4_monotonicity_sh_beta_1}
        \chambolleFunction{h,\beta_2}\varphi\leq \chambolleFunction{h,\beta_1}\varphi\qquad\mbox{in}\qquad\closure{\Omega}
    \end{equation}
    for any function $\varphi\in C(\closure{\Omega})$.
\end{lem}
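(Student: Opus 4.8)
The plan is to reduce the $\beta$-monotonicity of $\chambolleFunction{h}$ to a pointwise comparison of two minimizers, and to obtain that comparison from the comparison principle rather than from the energy. By \Lemma{lem:4_Sh_Th}, for every $\lambda\in\mathbb{R}$ and every relatively closed $E\subseteq\closure{\Omega}$ one has $\chambolleFunction{h,b}\varphi(x)\ge\lambda$ if and only if $x\in\chambolleSet{h,b}(\{\varphi\ge\lambda\})$, and by definition $\chambolleSet{h,b}(E)=\{w_b\le0\}$, where $w_b$ is the minimizer of $E^{b}_h$ with the common datum $g:=\geodis{E}$. Consequently it suffices to compare the two minimizers $w_{\beta_1}$ and $w_{\beta_2}$ pointwise: once they are ordered, the zero sub-level sets are nested, the operators $\chambolleSet{h,\beta_1}$ and $\chambolleSet{h,\beta_2}$ are nested on every $E$, and feeding this back through \Lemma{lem:4_Sh_Th} and the $\sup$ in the definition of $\chambolleFunction{h}$ gives \eqref{eq:4_monotonicity_sh_beta_1}.

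The naive attempt is to mimic \Lemma{lem:Monotonicity}: insert $w_{\beta_1}\wedge w_{\beta_2}$ and $w_{\beta_1}\vee w_{\beta_2}$ into the two minimality inequalities and add them. Using the submodularity of the total variation together with $\gamma(w_{\beta_1}\wedge w_{\beta_2})+\gamma(w_{\beta_1}\vee w_{\beta_2})=\gamma w_{\beta_1}+\gamma w_{\beta_2}$, everything collapses to
\[ 0\le\int_\Omega|\grad{w_{\beta_1}}|+\int_\Omega|\grad{w_{\beta_2}}|-\int_\Omega|\grad{(w_{\beta_1}\wedge w_{\beta_2})}|-\int_\Omega|\grad{(w_{\beta_1}\vee w_{\beta_2})}|\le\int_{\pOmega}(\beta_2-\beta_1)(\gamma w_{\beta_1}-\gamma w_{\beta_2})_+\dH{d-1}. \]
Since $w_{\beta_1}$ and $w_{\beta_2}$ carry the \emph{same} datum $g$, the two fidelity terms cancel identically, and the inequality that survives has a nonnegative quantity on each end; it is therefore inconclusive. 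This is the main obstacle: in contrast with \Lemma{lem:Monotonicity}, where the difference of the data forces the sign, here the energy alone cannot order the minimizers.

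To resolve it I would pass to the PDE. The inclusion \eqref{eq:4_EulerLagrange} shows that $w_b$ is the unique solution of the oblique-derivative problem \eqref{eq:4_Neumann_problem_master} with $\beta:=b$, understood in the viscosity sense. The interior operator in \eqref{eq:4_Neumann_problem_master} is independent of $b$, so $w_{\beta_1}$ and $w_{\beta_2}$ solve the \emph{same} interior equation; only the boundary operator $B_b(\cdot,p)=\innerproductSingle{p}{\normalConv_\Omega}+b|p|$ changes. Here $\beta_1\le\beta_2$ enters decisively: along $\pOmega$ the solution of the $\beta_1$-problem satisfies $B_{\beta_2}(\cdot,\grad{w_{\beta_1}})=(\beta_2-\beta_1)|\grad{w_{\beta_1}}|\ge0$, so that $w_{\beta_1}$ is a supersolution of the $\beta_2$-problem (equivalently, $w_{\beta_2}$ is a subsolution of the $\beta_1$-problem, since $B_{\beta_1}(\cdot,\grad{w_{\beta_2}})=(\beta_1-\beta_2)|\grad{w_{\beta_2}}|\le0$). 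The comparison principle for \eqref{eq:4_Neumann_problem_master}, available for the oblique-derivative problem as in \Lemma{lem:4_comparison_principle} and, in the viscosity formulation, in \cite{Ba,IshiiLions1990}, then orders the two minimizers pointwise throughout $\closure{\Omega}$, which by the reduction above is exactly the monotonicity \eqref{eq:4_monotonicity_sh_beta_1}.

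The one delicate point I would be careful about is to read the boundary computation in the \emph{viscosity} sense: I must first record that the energy minimizer $w_b$ coincides with the unique viscosity solution of \eqref{eq:4_Neumann_problem_master}, and then verify that the one-sided boundary inequality $B_{\beta_2}(\cdot,\grad{w_{\beta_1}})\ge0$ persists at boundary points where $w_{\beta_1}$ is tested from below by a smooth function, so that $w_{\beta_1}$ genuinely qualifies as a viscosity supersolution of the $\beta_2$-problem and the comparison principle applies. The remaining bookkeeping—transferring the ordering of the minimizers through the zero sub-level sets and through the $\sup$ over super-level sets in the definition of $\chambolleFunction{h}$ via \Lemma{lem:4_Sh_Th}—is then routine.
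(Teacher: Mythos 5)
Your overall route is the same as the paper's: reduce \eqref{eq:4_monotonicity_sh_beta_1} via \Lemma{lem:4_Sh_Th} to a pointwise ordering of the two minimizers $\weaksol{h}{E,\beta_1}$ and $\weaksol{h}{E,\beta_2}$, and obtain that ordering from the comparison principle for the oblique derivative problem \eqref{eq:4_Neumann_problem_master}, using that only the boundary operator depends on $b$. Your remark that the one-sided boundary inequality must be checked in the viscosity sense is well placed and is resolvable through (B1): shifting the test gradient by $+s\nu_\Omega$ with $s\geq 0$ increases $B$ by at least $(1-\|\beta\|_\infty)s$, so a classical inequality $B_{\beta_2}(\cdot,\nabla w_{\beta_1})\geq 0$ persists for test functions touching from below.

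The gap is in the direction of the final inequality, and it is not cosmetic. With the paper's conventions (a subsolution must satisfy $B_*\leq 0$ at boundary points tested from above; a supersolution must satisfy $B^*\geq 0$ at boundary points tested from below), your computation $B_{\beta_2}(\cdot,\nabla w_{\beta_1})=(\beta_2-\beta_1)|\nabla w_{\beta_1}|\geq 0$ makes $w_{\beta_1}$ a \emph{supersolution} of the $\beta_2$-problem, exactly as you state. The comparison principle then gives $w_{\beta_2}\leq w_{\beta_1}$, hence $\{w_{\beta_2}\leq 0\}\supseteq\{w_{\beta_1}\leq 0\}$, i.e. $\chambolleSet{h,\beta_2}(E)\supseteq\chambolleSet{h,\beta_1}(E)$, and therefore $\chambolleFunction{h,\beta_2}\varphi\geq\chambolleFunction{h,\beta_1}\varphi$ --- the \emph{reverse} of \eqref{eq:4_monotonicity_sh_beta_1}. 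Your closing claim that the resulting ordering ``is exactly the monotonicity'' skips over this: the chain you set up does not deliver the stated inequality. The paper's own proof instead asserts (without the boundary computation) that $w_{\beta_1}$ is a viscosity \emph{subsolution} of the $\beta_2$-problem and deduces $w_{\beta_1}\leq w_{\beta_2}$, which is what \eqref{eq:4_monotonicity_sh_beta_1} requires; your computation, read with the stated sign conventions, supports the opposite classification. Incidentally, the lattice argument you discard is not entirely inconclusive: pairing $w_{\beta_1}\vee w_{\beta_2}$ with the $\beta_1$-energy and $w_{\beta_1}\wedge w_{\beta_2}$ with the $\beta_2$-energy yields $\int_{\pOmega}(\beta_2-\beta_1)(\gamma w_{\beta_2}-\gamma w_{\beta_1})_+\,\dH{d-1}\leq 0$, hence $\gamma w_{\beta_2}\leq\gamma w_{\beta_1}$ wherever $\beta_1<\beta_2$, which again points in the same direction as your supersolution argument. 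You must therefore either identify a sign that you are reading differently from the paper (the orientation of $B$, or of the sub-level set defining $\chambolleSet{h}$), or accept that your argument, as written, proves the reversed inequality rather than \eqref{eq:4_monotonicity_sh_beta_1}.
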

\begin{proof}
    For each $b:\pOmega\to[-1,1]$, let $\chambolleSet{h,b}(E) := \{\weaksol{h}{E,b}\leq 0\}$ where
    $\weaksol{h}{E,b}$ the unique solution to \eqref{eq:4_Neumann_problem_master} with $\beta := b$ and $g = \geodis{E}$.
    Then, we observe that $\weaksol{h}{E,\beta_1}$ is a viscosity subsolution of \eqref{eq:4_Neumann_problem_master}
    with $\beta := \beta_2$. Hence, we deduce from the comparison principle that $\weaksol{h}{E,\beta_1}\leq \weaksol{h}{E,\beta_2}$ in $\closure{\Omega}$.
    This estimate implies that $\chambolleSet{h,\beta_2}(E)\subset\chambolleSet{h,\beta_1}(E)$. Therefore,
    by the definition of $\chambolleFunction{h}$, it follows that $\chambolleFunction{h,\beta_2}\varphi\leq\chambolleFunction{h,\beta_1}\varphi$.
\end{proof}

We are now in the position to prove the consistency of the scheme $\chambolleFunction{h}$.

\begin{thm}[Consistency of $\chambolleFunction{h}$]\label{thm:4_generator_2}
    Let $\varphi\in C^2(\closure{\Omega})$.
    Suppose that $\Omega$ and $\beta$ satisfy the criteria of Proposition \ref{prop:4_existence_super_sub_solution} and \Theorem{thm:4_grad_bound_of_weaksol}.
    Then, it holds that
    \begin{align}
        &\relaxsup{h}{0}\frac{\chambolleFunction{h}\varphi(z) - \varphi(z)}{h}\leq -F_*(\grad{\varphi}(z),\hess{\varphi}(z))\label{eq:4_limsup_estimate}\\
        &(resp,\quad\relaxinf{h}{0}\frac{\chambolleFunction{h}\varphi(z) - \varphi(z)}{h} \geq -F^*(\grad{\varphi}(z),\hess{\varphi}(z)))\label{eq:4_liminf_estimate}
    \end{align}
    whenever $z\in\closure{\Omega}$ satisfies one of the following conditions:
    \begin{itemize}
        \item $z\in\Omega$ and either $\nabla\varphi(z)\neq 0$ or $\nabla\varphi(z) = 0$ and $\nabla^2\varphi(z) = O$.
        \item $z\in\pOmega$, $\nabla\varphi(z)\neq 0$ and $\innerproductSingle{\grad{\varphi(z)}}{\normalConv_\Omega(z)} + \beta(z)|\nabla\varphi(z)| > 0$ \newline
        (resp, $\innerproductSingle{\grad{\varphi(z)}}{\normalConv_\Omega(z)} + \beta(z)|\grad{\varphi(z)}| < 0$).
    \end{itemize}
\end{thm}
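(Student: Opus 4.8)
The plan is to reduce the two consistency inequalities to the sharp one-step estimate of \Proposition{prop:4_existence_super_sub_solution} by passing through the set operator $\chambolleSet{h}$. Writing $\mu := \varphi(z)$ and applying \Lemma{lem:4_Sh_Th} level by level, we have
\[
    \chambolleFunction{h}\varphi(z) = \sup\{\lambda\in\mathbb{R} \mid z\in\chambolleSet{h}(\sublevel{\varphi}{\lambda})\} = \sup\{\lambda\in\mathbb{R} \mid \weaksol{h}{\sublevel{\varphi}{\lambda}}(z)\leq 0\},
\]
so the difference quotient $(\chambolleFunction{h}\varphi(z) - \varphi(z))/h$ is governed by the threshold level at which $\weaksol{h}{\sublevel{\varphi}{\lambda}}(z)$ changes sign. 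The degenerate interior case $\grad{\varphi}(z) = \zerovec$, $\hess{\varphi}(z) = O$ I would settle first by the standard Barles--Souganidis device: \eqref{eq:4_Monotonicity} and \eqref{eq:4_TranslationInvariance} let one squeeze $\varphi$ near $z$ between $\mu\pm\sigma|x-z|^2$, apply the nondegenerate estimate below to these paraboloids away from their vertex, and let $\sigma\to0$, using $F_*(\zerovec,O) = F^*(\zerovec,O) = 0$.

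For the main case $\grad{\varphi}(z)\neq\zerovec$ I would first prove the limsup bound \eqref{eq:4_limsup_estimate}. \Proposition{prop:4_existence_super_sub_solution} supplies $\delta,h_0,r,C>0$ so that, for $h\in(0,h_0)$ and every $\lambda$ with $|\mu-\lambda|\leq C\sqrt{h}$,
\[
    \weaksol{h}{\sublevel{\varphi}{\lambda}}(z) \leq \geodis{\sublevel{\varphi}{\lambda}}(z) - h\kappa_{\sublevel{\varphi}{\lambda}}(z) + 3\varepsilon h.
\]
Since $\grad{\varphi}(z)\neq\zerovec$, the set $\{\varphi = \mu\}$ is a smooth hypersurface through $z$, and a first-order expansion of the signed geodesic distance gives $\geodis{\sublevel{\varphi}{\lambda}}(z) = (\lambda-\mu)/|\grad{\varphi}(z)| + O((\lambda-\mu)^2)$, while $\kappa_{\sublevel{\varphi}{\lambda}}(z)\to\kappa_{\sublevel{\varphi}{\mu}}(z)$ as $\lambda\to\mu$. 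The sign-change threshold lies at $\lambda-\mu = O(h)$, comfortably inside the admissible window $|\mu-\lambda|\leq C\sqrt h$, so the quadratic remainder is $O(h^2)$ and negligible. Feeding these expansions into the displayed inequality, $\weaksol{h}{\sublevel{\varphi}{\lambda}}(z)\leq 0$ forces $\lambda-\mu\leq h\,|\grad{\varphi}(z)|\,\kappa_{\sublevel{\varphi}{\mu}}(z) + O(\varepsilon h) + O(h^2)$; taking the supremum over admissible $\lambda$, dividing by $h$, and letting $h\to0$ then $\varepsilon\to0$ yields \eqref{eq:4_limsup_estimate} once we use the geometric identity $|\grad{\varphi}(z)|\,\kappa_{\sublevel{\varphi}{\mu}}(z) = -F(\grad{\varphi}(z),\hess{\varphi}(z))$, which converts the mean curvature of the level set into the level-set operator and, since $F$ is continuous off $\{p=\zerovec\}$, identifies the right-hand side with $-F_*(\grad{\varphi}(z),\hess{\varphi}(z))$.

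The liminf bound \eqref{eq:4_liminf_estimate} follows by the mirror argument, starting from the subsolution $\subsolution{w}$ of \Proposition{prop:4_existence_super_sub_solution} and the reverse inequality $\weaksol{h}{\sublevel{\varphi}{\lambda}}(z)\geq\geodis{\sublevel{\varphi}{\lambda}}(z) - h\kappa_{\sublevel{\varphi}{\lambda}}(z) - 3\varepsilon h$. Two bookkeeping points require attention. First, the relaxed limits $\relaxsup{h}{0}$ and $\relaxinf{h}{0}$ perturb $z$ to nearby points $y$, so all estimates must be uniform on a neighbourhood; this is exactly why \Proposition{prop:4_existence_super_sub_solution} is phrased on the set $U_{\delta,r}$ rather than at $z$ alone, and since the strict inequality $\innerproductSingle{\grad{\varphi}(z)}{\normalConv_\Omega(z)} + \beta(z)|\grad{\varphi}(z)| > 0$ (resp. $<0$) is open, it persists for $y$ close to $z$. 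Second, the modification of $\varphi$ permitted by \Proposition{prop:4_existence_super_sub_solution} leaves $\varphi$ unchanged on $B(z,\delta)$, hence preserves the first- and second-order data of $\varphi$ at $z$ and both sides of the consistency inequalities.

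The step I expect to be most delicate is the boundary case $z\in\pOmega$. Its resolution rests entirely on \Proposition{prop:4_existence_super_sub_solution}, where the inscribed/circumscribed ball of the interior argument is replaced by a translative soliton; the point is that the strict obliqueness $\innerproductSingle{\grad{\varphi}(z)}{\normalConv_\Omega(z)} + \beta(z)|\grad{\varphi}(z)| > 0$ (resp. $<0$) is exactly what permits such a soliton to be placed with its level set inside (resp., outside) $\sublevel{\varphi}{\mu}$ near $z$, so that comparison for \eqref{eq:4_Neumann_problem_master} holds up to the boundary. Granting that proposition, the interior computation transfers verbatim, and the only genuinely new work in the present theorem is to carry the strict sign condition and the neighbourhood estimates through the two relaxed limits.
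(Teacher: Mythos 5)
Your treatment of the interior nondegenerate case is a legitimate alternative to the paper's. The paper does not locate the sign-change threshold at $z$ itself; instead it evaluates $\chambolleFunction{h}\varphi$ at the displaced points $z_h^{\pm}=z\pm\frac{\grad{\varphi}(z)}{|\grad{\varphi}(z)|}\sqrt{2h}$, obtaining one bound from \Proposition{prop:4_existence_super_sub_solution} at $z_h^-$ and the other from a global comparison of $\varphi$ with translating solitons $\soliton{\mu,\betalow}$ via the monotonicity of the scheme in $\beta$ (\Lemma{lem:4_monotonicity_sh_beta}) at $z_h^+$. For $z\in\Omega$ your expansion $\geodis{\sublevel{\varphi}{\lambda}}(z)=(\lambda-\mu)/|\grad{\varphi}(z)|+O((\lambda-\mu)^2)$, the identity $|\grad{\varphi}|\kappa=-F$, and the uniformity over $y$ with $|y-z|<h$ supplied by the $U_{\delta,r}$ formulation are all sound, and the argument is arguably cleaner than the paper's.

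The gap is the boundary case: the claim that, granting \Proposition{prop:4_existence_super_sub_solution}, "the interior computation transfers verbatim" is false. First, $U_{\delta,r}$ consists of interior points by definition, so the proposition's estimate cannot be evaluated at $z\in\pOmega$ itself; one must approach $z$ from inside, and then the direction of approach matters. Second, and more seriously, the first-order expansion of the signed geodesic distance changes at the boundary. Writing $\Phi(z)=\langle\grad{\varphi}(z)/|\grad{\varphi}(z)|,\nu_\Omega(z)\rangle$, whenever $\Phi(z)>0$ (forced, e.g., when $\beta(z)<0$ under the hypothesis $\Phi(z)>-\beta(z)$) the steepest-ascent direction of $\varphi$ exits $\closure{\Omega}$, the nearest point of $\sublevel{\varphi}{\lambda}$ is reached essentially tangentially, and the coefficient of $\lambda-\mu$ becomes $1/\bigl(|\grad{\varphi}(z)|\sqrt{1-\Phi(z)^2}\bigr)$ rather than $1/|\grad{\varphi}(z)|$. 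Your threshold computation then yields only $(\chambolleFunction{h}\varphi-\varphi)/h\lesssim|\grad{\varphi}(z)|\sqrt{1-\Phi(z)^2}\,\kappa$, which does not imply the required bound $-F=|\grad{\varphi}(z)|\kappa$ when $\kappa<0$. This is exactly why the paper's boundary argument replaces the normal displacement by the tangential one $z_h^{\pm}=z\pm\containerTangent(z)\sqrt{2h}$ and derives one of the two one-sided bounds not from any distance expansion but from the global soliton comparison $\varphi\le\soliton{\mu,-\betahigh}$ together with \Lemma{lem:4_monotonicity_sh_beta} and the exact one-step evolution of solitons; your sketch omits this mechanism entirely. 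A secondary gap: in the degenerate case your paraboloid squeeze reduces to estimating $\chambolleSet{h}$ on small balls and their complements near their centers, and the explicit radial computation of Caselles--Chambolle that would supply this is exactly what the paper says is unavailable here because of the capillary boundary term; the paper instead flattens $\varphi$ near $z$ and sandwiches it between two translating solitons, and you would need to do likewise or prove a capillary ball estimate.
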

\begin{proof}
    Set $\mu := \varphi(z)$. Fix any $\varphi\in C^2(\closure{\Omega})$ and any $\varepsilon > 0$.\newline\newline
    [\textbf{Case $z\in\Omega$ and $\grad{\varphi}(z)\neq \zerovec$}]\newline
    Then, we deduce from Lemma \ref{prop:4_existence_super_sub_solution} that
    there exist a smooth function $\tilde{\varphi}$ and a positive constant $\delta$ such that the estimate
    \begin{equation}\label{eq:4_approx_estimate}
        |\weaksol{h}{\sublevel{\tilde{\varphi}}{\lambda}} - \geodis{\sublevel{\tilde{\varphi}}{\lambda}} + h\kappa_{\sublevel{\tilde{\varphi}}{\lambda}}| \leq \varepsilon h\ \ \mbox{in}\ \ U_{\delta,r}
    \end{equation}
    holds for sufficiently small $h>0$ and $r > 0$ and $\lambda\in\mathbb{R}$ with $|\mu-\lambda|\leq C\sqrt{h}$ with $C:=\sqrt{2}|\grad{\varphi(z)}|$
    and $\tilde{\varphi} = \varphi$ in $U_{\delta,r}$. 
    For simplicity, we still write $\tilde{\varphi}$ as $\varphi$.
    We now define
    \begin{equation*}
        \lambda_h^\pm := \varphi(z_h^\pm) +\{-F(\grad{\varphi(z)},\hess{\varphi(z)}) + \varepsilon\} h,
    \end{equation*}
    where
    \begin{equation*}
        z_h^\pm := z \pm \frac{\grad{\varphi}(z)}{|\grad{\varphi}(z)|}\sqrt{2h}.
    \end{equation*}
    Then, we shall show that $\chambolleFunction{h}\varphi(z_h^\pm) \leq\lambda_h^\pm$ for sufficiently small $h>0$.
    We note that this statement is equivalent to $z_h^\pm\notin\chambolleSet{h}(\sublevel{\varphi}{\lambda_h^\pm})$ by Lemma \ref{lem:4_Sh_Th}.
    First, we prove that $\chambolleFunction{h}\varphi(z_h^-) \leq \lambda_h^-$.
    We use \eqref{eq:4_approx_estimate} with $\mu:=\lambda_h^-$ to derive
    \begin{equation}\label{eq:4_approx_estimate_2}
        \weaksol{h}{\sublevel{\varphi}{\lambda_h^-}}(z_h^-)\geq \geodis{\sublevel{\varphi}{\lambda_h^-}}(z_h^-) - h\kappa_{\sublevel{\varphi}{\lambda_h^-}}(z_h^-) - \varepsilon h
        \geq \geodis{\sublevel{\varphi}{\lambda_h^-}}(z_h^-) - (K+\varepsilon)h,
    \end{equation}
    where $K := \sup_{0\leq h\leq 1}\|\kappa_{\sublevel{\varphi}{\lambda_h^-}}\|_{C(\overline{U_{\delta,r}})}$.
    Since $\geodis{\sublevel{\varphi}{\lambda_h^-}}$ is smooth, we have
    \begin{equation}\label{eq:4_approx_estimate_3}
        \geodis{\sublevel{\varphi}{\lambda_h^-}}(z_h^-) 
        = \geodis{\sublevel{\varphi}{\lambda_h^-}}(z) 
        -\innerproductSingle{\grad{\geodis{\sublevel{\varphi}{\lambda_h^-}}}(\widetilde{z_h^-})}{\frac{\grad{\varphi}(z)}{|\grad{\varphi}(z)|}}\sqrt{2h},
    \end{equation}
    where $\widetilde{z_h^-} = z - \frac{\grad{\varphi(z)}}{|\grad{\varphi(z)}|}\widetilde{h}$ for some $\widetilde{h}\in (0,\sqrt{2h})$.
    We deduce from the geometry (see \Figure{fig:4_interior}) that
    \begin{equation}\label{eq:4_approx_estimate_4}
        \geodis{\sublevel{\varphi}{\lambda_h^-}}(z)\to 0\qquad\mbox{and}\qquad \innerproductSingle{\grad{\geodis{\sublevel{\varphi}{\lambda_h^-}}}(\widetilde{z_h^-})}{\frac{\grad{\varphi}(z)}{|\grad{\varphi}(z)|}}\to -1
    \end{equation}
    as $h\to 0$. Here, we have recalled that $|\grad{\geodis{\sublevel{\varphi}{\lambda_h^-}}}| = 1$ 
    to derive the second convergence of \eqref{eq:4_approx_estimate_4}.

  \begin{figure}[H]
      \centering
      \includegraphics[keepaspectratio, scale=0.3]{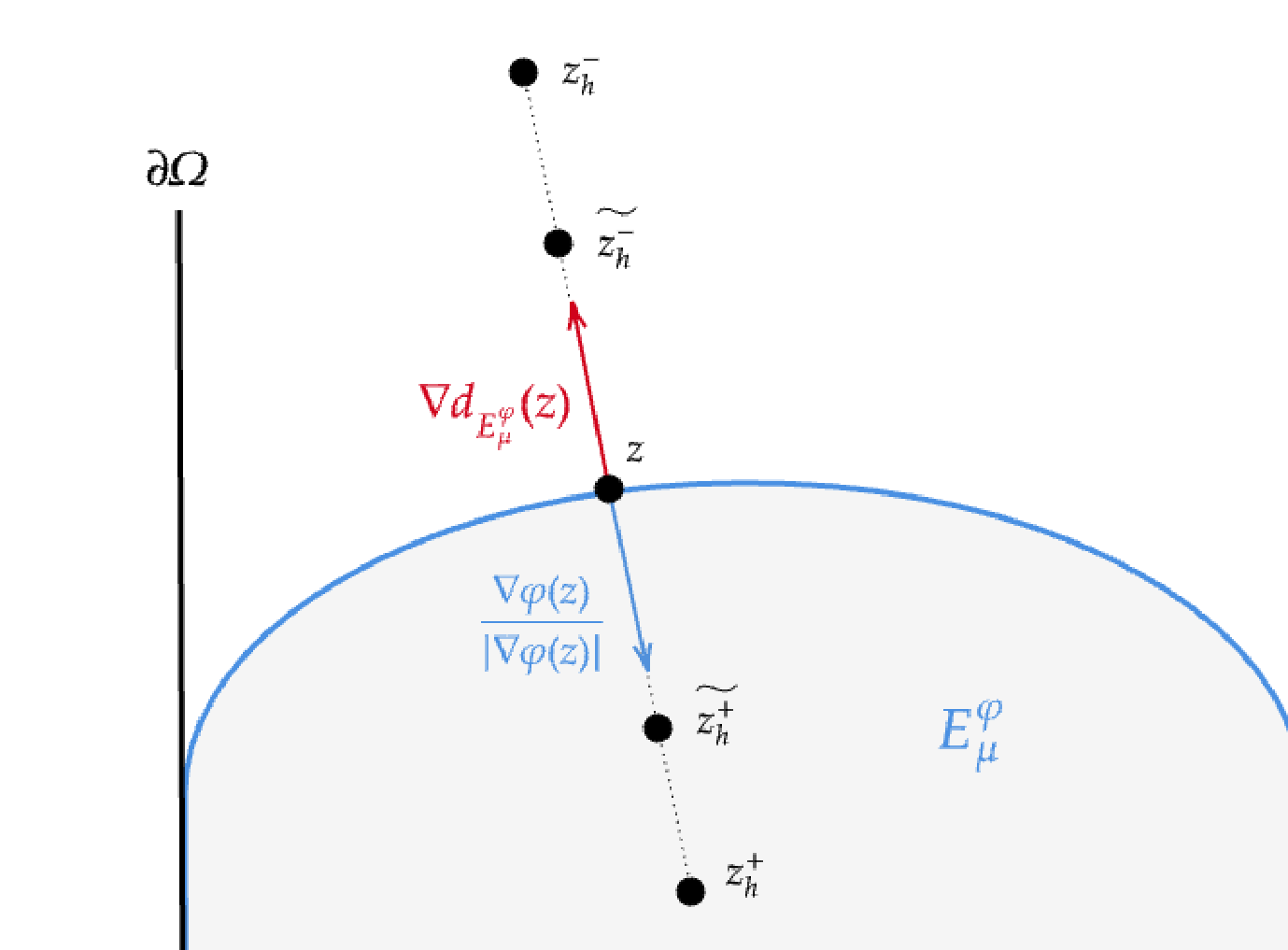}
      \caption{The location of important points associated with $z\in\Omega$.}\label{fig:4_interior}
  \end{figure}

    Combining \eqref{eq:4_approx_estimate_2}, \eqref{eq:4_approx_estimate_3} and \eqref{eq:4_approx_estimate_4},
    we conclude that $\weaksol{h}{\sublevel{\varphi}{\lambda_h^-}}(z_h^-) > 0$ for sufficiently small $h>0$.
    Thus, we obtain that $z_h^-\notin\chambolleSet{h}(\sublevel{\varphi}{\lambda^-_h})$.

    Second, we show that $\chambolleFunction{h}\varphi(z_h^+)\leq\lambda_h^+$.
    {Comparing the super level sets $\sublevel{\varphi}{\mu}$ and $\sublevel{\soliton{\mu+\frac{\clow h}{2},\betalow}}{\mu}$ (see \Figure{fig:4_compare_graphs}) and applying} Lemma \ref{lem:4_monotonicity_sh_beta}, {we compute}
    \begin{equation}\label{eq:4_approx_estimate_5}
        \chambolleFunction{h}\varphi \leq \chambolleFunction{h}\left(\soliton{\mu+\frac{\clow h}{2},\betalow}\right) \leq \chambolleFunction{h,\betalow}\left(\soliton{\mu+\frac{\clow h}{2},\betalow}\right) = \soliton{\mu-\frac{\clow h}{2},\betalow}\qquad\mbox{in}\qquad\closure{\Omega},
    \end{equation}
    where 
    \begin{equation*}
        \clow := \arctan{\left(\frac{-\betalow}{\sqrt{1 - \betalow^2}}\right)}.
    \end{equation*}

  \begin{figure}[H]
      \centering
      \includegraphics[keepaspectratio, scale=0.15]{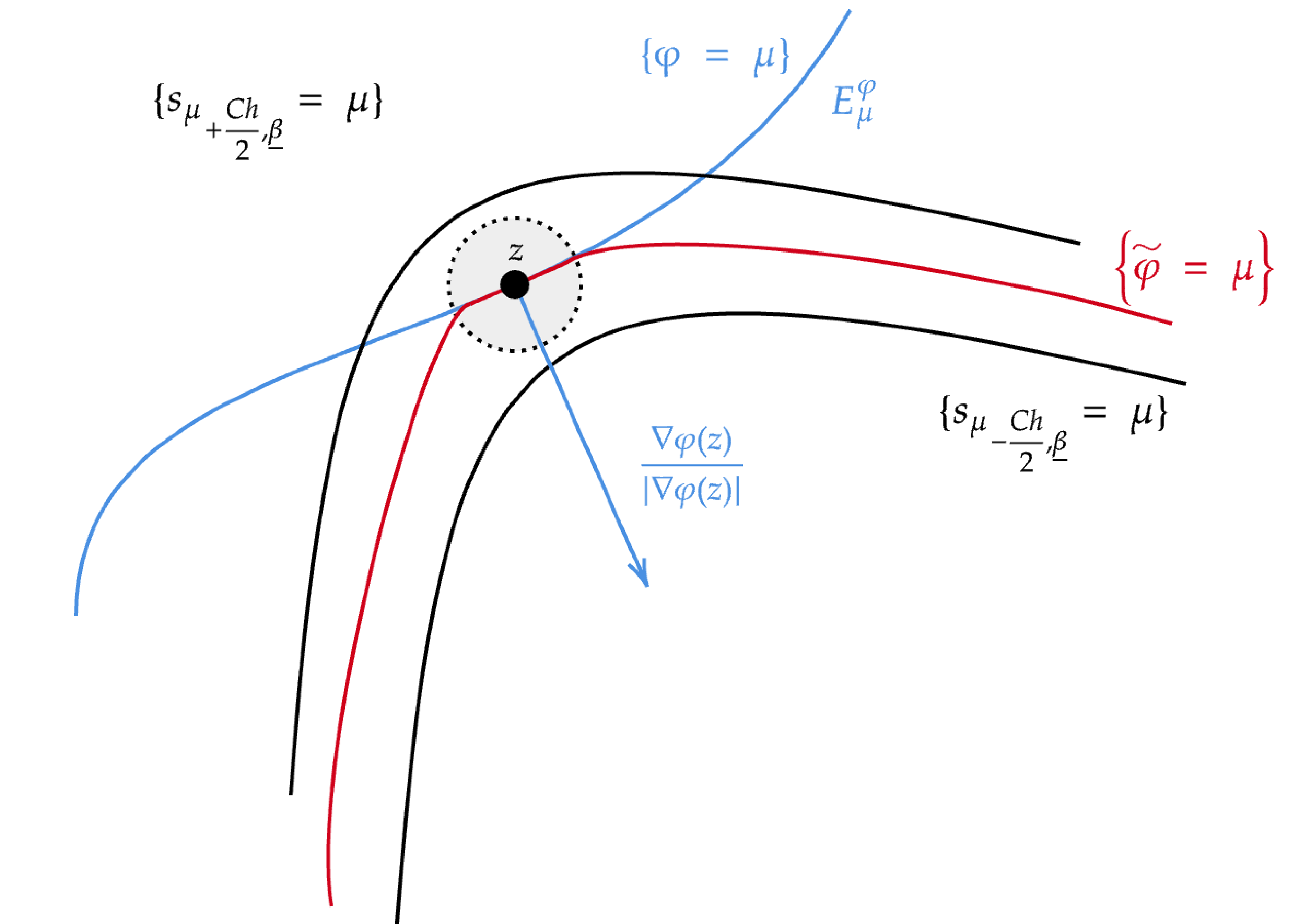}
      \caption{{The boundaries of the super level sets.}}\label{fig:4_compare_graphs}
  \end{figure}

    Evaluating \eqref{eq:4_approx_estimate_5} at $z_h^+$ yields
    \begin{equation}\label{eq:4_approx_estimate_5_1}
        \chambolleFunction{h}\varphi(z_h^+) \leq \soliton{\mu - \frac{\clow h}{2},\betalow}(z_h^+) = \mu + |\grad{\varphi(z)}|\frac{\clow h}{2}\leq \mu + |\grad{\varphi(z)}|\sqrt{2h} + h\laplacian{\varphi(z)}
    \end{equation}
    for sufficiently small $h>0$.
    Here, to derive the last inequality of \eqref{eq:4_approx_estimate_5_1}, we note that
    for every $C_1\in\mathbb{R}$ and $C_2 > 0$, $C_1h < C_2\sqrt{h}$ holds for sufficiently small $h>0$
    (we may set $C_1 := |\grad{\varphi(z)}|\frac{\clow}{2} - \laplacian{\varphi(z)}$ and $C_2 := \sqrt{2}|\grad{\varphi}{(z)}|$).
    Meanwhile, {the Taylor} expansion gives
    \begin{equation}\label{eq:4_approx_estimate_5_2}
        \varphi(z_h^+) = \mu + |\grad{\varphi(z)}|\sqrt{2h} + \innerproductSingle{\hess{\varphi(\widetilde{z_h^+})\frac{\grad{\varphi(z)}}{|\grad{\varphi(z)}|}}}{\frac{\grad{\varphi(z)}}{|\grad{\varphi(z)}|}}h,
    \end{equation}
    where $\widetilde{z_h^+} = z + \frac{\grad{\varphi(z)}}{|\grad{\varphi(z)}|}\widetilde{h}$ for some $\widetilde{h}\in (0,\sqrt{2h})$.
    Combining \eqref{eq:4_approx_estimate_5_1} and \eqref{eq:4_approx_estimate_5_2}, we derive
    \begin{equation*}
        \chambolleFunction{h}\varphi(z_h^+)\leq \varphi(z_h^+) + \left\{\laplacian{\varphi(z)} - \innerproductSingle{\hess{\varphi({\widetilde{z_h^+}})}\frac{\grad{\varphi(z)}}{|\grad{\varphi(z)}|}}{\frac{\grad{\varphi(z)}}{|\grad{\varphi(z)}|}}\right\}h.
    \end{equation*}
    Noting that
    \begin{equation*}
        -F(\grad{\varphi(z)},\hess{\varphi(z)}) = \laplacian{\varphi(z)} - \innerproductSingle{\hess{\varphi(z)}\frac{\grad{\varphi(z)}}{|\grad{\varphi(z)}|}}{\frac{\grad{\varphi(z)}}{|\grad{\varphi(z)}|}}
    \end{equation*}
    and that $\hess{\varphi}$ is continuous, 
    we can take $h>0$ so small that
    \begin{equation*}
        \chambolleFunction{h}\varphi(z_h^+) \leq \varphi(z_h^+) + \{-F(\grad{\varphi(z),\hess{\varphi(z)}}) + \varepsilon\}h.
    \end{equation*}
    The similar argument yields
    \begin{equation*}
        \chambolleFunction{h}\varphi(z_h^\pm) \geq \varphi(z_h^\pm) + \{-F(\grad{\varphi(z)}, \hess{\varphi(z)}) - \varepsilon\}h
    \end{equation*}
    for sufficiently small $h>0$. We complete the proof for this case.
    \newline\newline
    \noindent
    [\textbf{Case $z\in\pOmega$ and $\grad{\varphi(z)}\neq \zerovec$}]\newline
    In this case, we have to consider a sequence $\{z_h\}_h$ on $\pOmega$ which converges to $z$.
    To this end, for each $z\in\pOmega$, we set 
    \begin{equation*}
        \containerTangent(z) := \frac{\grad{\varphi(z)}}{|\grad{\varphi(z)}|} - \Phi(z)\normalConv_\Omega(z)\qquad\mbox{with}\qquad\Phi(z):=\innerproductSingle{\frac{\grad{\varphi(z)}}{|\grad{\varphi(z)}|}}{\normalConv_\Omega(z)}.
    \end{equation*}
    Note that $\containerTangent(z)$ is nothing but the projection of the vector $\normalConv(z)$ onto $\pOmega$.
    Then, we define
    \begin{equation*}
        z_h^\pm := z \pm \containerTangent(z)\sqrt{2h}.
    \end{equation*}
    First, suppose that $\innerproductSingle{\grad{\varphi(z)}}{\normalConv_\Omega(z)} + \beta(z)|\grad{\varphi(z)}| > 0$.
    Then, we have
    \begin{equation*}
        \Phi(z) > -\beta(z)\geq -\betahigh.
    \end{equation*}
    Then, geometrically speaking, it holds that either the graph of $\varphi$ is bounded by $\soliton{\mu,-\betahigh}$ from above or that
    it is bounded from $\soliton{\mu,\betahigh}$ below. See \Figure{fig:4_soliton_image_1} to grasp the situation.

  \begin{figure}[H]
      \centering
      \includegraphics[keepaspectratio, scale=0.15]{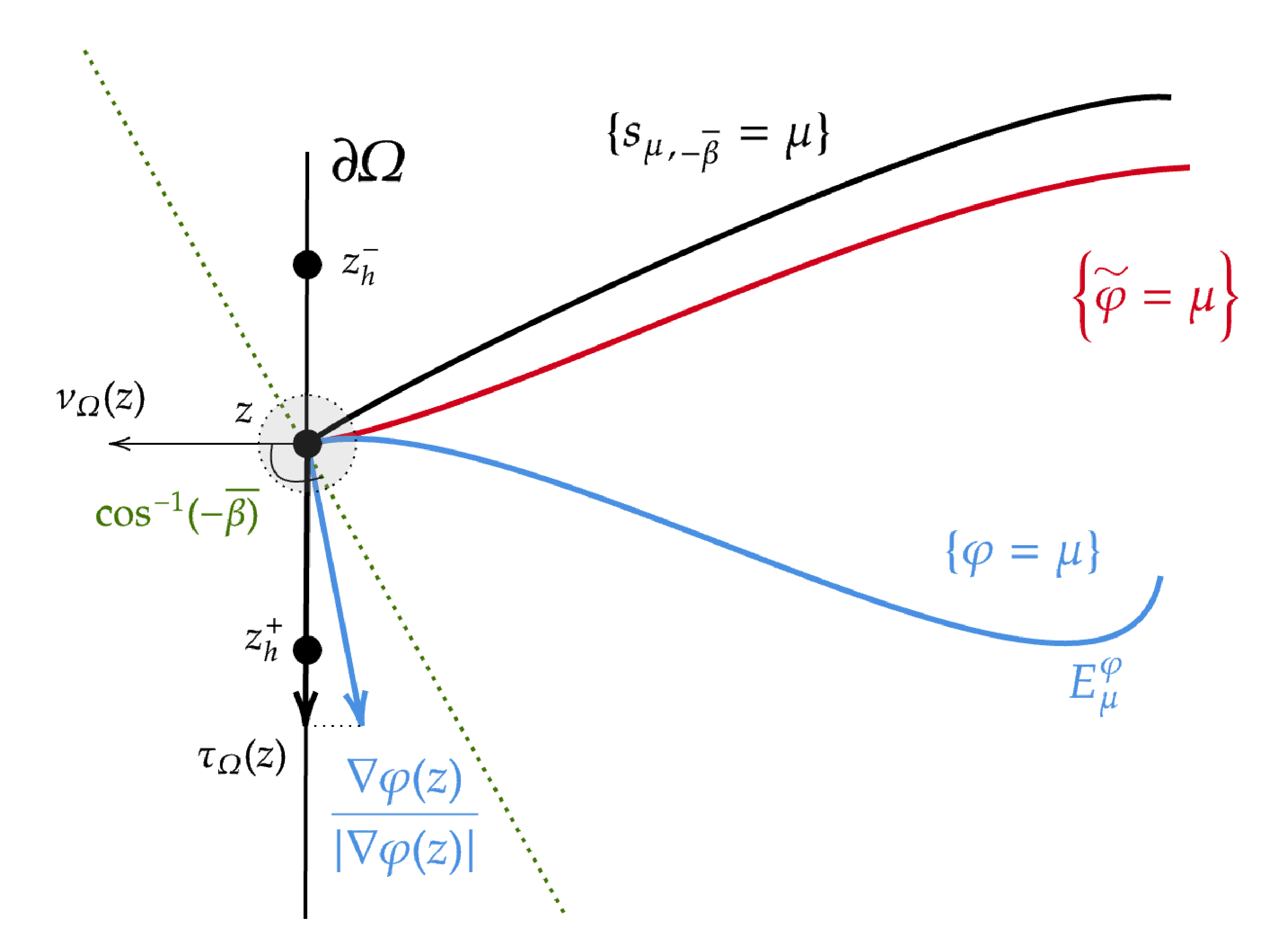}
      \caption{The graphs of a translating soliton and the level set of $\varphi$.}\label{fig:4_soliton_image_1}
  \end{figure}

    We assume that the former statement is valid.
    Then, comparing the level sets of $\varphi$ and $\soliton{\mu,-\betahigh}$, we deduce that
    \begin{equation}\label{eq:4_approx_estimate_6}
        \soliton{\mu,-\betahigh} \geq \varphi\qquad\mbox{in}\qquad\closure{\Omega}.
    \end{equation}
    Applying $\chambolleFunction{h}$ both sides of \eqref{eq:4_approx_estimate_6} 
    and the monotonicity of $\chambolleFunction{h}$ together with Lemma \ref{lem:4_monotonicity_sh_beta} yield
    \begin{equation}\label{eq:4_approx_estimate_7}
        \soliton{\mu-\chigh h,-\betahigh} = \chambolleFunction{h,-\betahigh}(\soliton{\mu,-\betahigh}) \geq \chambolleFunction{h}(\soliton{\mu,-\betahigh})\geq \chambolleFunction{h}\varphi,
    \end{equation}
    where
    \begin{equation*}
        \chigh := \arctan{\left(\frac{\betahigh}{\sqrt{1-\betahigh^2}}\right)}.
    \end{equation*}
    We evaluate the equation \eqref{eq:4_approx_estimate_7} at $z_h^+$ to obtain
    \begin{align}\label{eq:4_approx_estimate_13}
        \begin{split}
            \chambolleFunction{h}\varphi(z_h^+) &\leq \soliton{\mu-\chigh h-\betahigh}(z_h^+) = \varphi(z) + \chigh h\innerproductSingle{\containerTangent(z)}{\grad{\varphi(z)}}\\
            &\leq \mu + \innerproductSingle{\containerTangent(z)}{\grad{\varphi(z)}}\sqrt{2h} + h\laplacian{\varphi(z)}\\
            & \quad +2h\Phi(z)\innerproductSingle{\hess{\varphi(z)}\frac{\grad{\varphi(z)}}{|\grad{\varphi(z)}|}}{\normalConv_\Omega(z)} - h\Phi(z)^2\innerproductSingle{\hess{\varphi(z)}\normalConv_\Omega(z)}{\normalConv_\Omega(z)},
        \end{split}
    \end{align}
    where $h > 0$ is taken small enough, and we have used that $\innerproductSingle{\containerTangent(z)}{\grad{\varphi(z)}} > 0$.
    Meanwhile, {the Taylor} expansion shows
    \begin{equation}\label{eq:4_approx_estimate_14}
        \varphi(z_h^+) = \mu + \innerproductSingle{\containerTangent(z)}{\grad{\varphi(z)}}\sqrt{2h} + \innerproductSingle{\hess{\varphi(\widetilde{z_h^+})}\containerTangent(z)}{\containerTangent(z)}h
    \end{equation}
    and
    \begin{align}\label{eq:4_approx_estimate_15}
        \begin{split}
            \innerproductSingle{\hess{\varphi(\widetilde{z_h^+})}\containerTangent(z)}{\containerTangent(z)} &= \innerproductSingle{\hess{\varphi(\widetilde{z_h^+})}\frac{\grad{\varphi(z)}}{|\grad{\varphi(z)}|}}{\frac{\grad{\varphi(z)}}{|\grad{\varphi(z)}|}}\\
            &\quad +2\Phi(z)\innerproductSingle{\hess{\varphi(\widetilde{z_h^+})}\frac{\grad{\varphi(z)}}{|\grad{\varphi(z)}|}}{\normalConv_\Omega(z)} \\
            &\quad -\Phi(z)^2\innerproductSingle{\hess{\varphi(\widetilde{z_h^+})}\normalConv_\Omega(z)}{\normalConv_\Omega(z)},
        \end{split}
    \end{align}
    where $\widetilde{z_h^+} = z + \frac{\grad{\varphi(z)}}{|\grad{\varphi(z)}|}\widetilde{h}$ with $\widetilde{h}\in(0,\sqrt{2h})$.
    Since $\hess{\varphi(z)}$ is continuous, we deduce from \eqref{eq:4_approx_estimate_13}, \eqref{eq:4_approx_estimate_14} and \eqref{eq:4_approx_estimate_15} that
    \begin{align*}
        \chambolleFunction{h}\varphi(z_h^+)&\leq \varphi(z_h^+) + \left\{\laplacian{\varphi(z)} -\innerproductSingle{\hess{\varphi(\widetilde{z_h^+})}\surfacenormal{\varphi}{z}}{\surfacenormal{\varphi}{z}}\right\}h\\
        &\quad +2\Phi(z)\innerproductSingle{\left\{\hess{\varphi(\widetilde{z_h^+})} - \hess{\varphi(z)}\right\}\surfacenormal{\varphi}{z}}{\normalConv_\Omega(z)}h \\
        &\quad +\Phi(z)^2\innerproductSingle{\left\{\hess{\varphi(\widetilde{z_h^+})} - \hess{\varphi(z)}\right\}\normalConv_\Omega(z)}{\normalConv_\Omega(z)}h\\
        &\leq \varphi(z_h^+) + \left\{-F(\grad{\varphi(z),\hess{\varphi(z)}}) + \varepsilon\right\}
    \end{align*}
    for sufficiently small $h > 0$.
    Let us estimate $\chambolleFunction{h}\varphi$ at $z_h^-$.
    Fix any $\varepsilon > 0$, we set $\lambda^-_h := \varphi(z_h^-) + \{-F(\grad{\varphi(z),\hess{\varphi(z)}}) + \varepsilon\} h$.
    Then, it suffices to prove that $\weaksol{h}{\sublevel{\varphi}{\lambda^-_h}}(z_h^-) > 0$ for $h > 0$ small enough.
    We deduce from Proposition \ref{prop:4_existence_super_sub_solution} that
    \begin{align}\label{eq:4_approx_estimate_8}
        \weaksol{h}{\sublevel{\varphi}{\lambda_h^-}}(z_h^-) &\geq \geodis{\sublevel{\varphi}{\lambda_h^-}}(z_h^-) - h\kappa_{\sublevel{\varphi}{\lambda^-_h}}(z_h^-) -\varepsilon h\nonumber \\
        &\geq \geodis{\sublevel{\varphi}{\lambda^-_h}}(z) - \innerproductSingle{\containerTangent(z)}{\grad{\geodis{\sublevel{\varphi}{\lambda_h^-}}(z)}}\sqrt{2h} - h(K+\varepsilon).
    \end{align}
    Here, we note that the coefficient of $\sqrt{2h}$ always positive by geometry.
    Letting $h > 0$ small enough, we see that the left-hand side of \eqref{eq:4_approx_estimate_8} is positive, which means 
    \begin{equation*}
        \chambolleFunction{h}\varphi(z_h^-) \leq \lambda^-_h = \varphi(z_h^-) + \{-F(\grad{\varphi(z)}, \hess{\varphi(z)}) + \varepsilon\} h.
    \end{equation*}
    In the case where the graph of $\varphi$ is bounded by $\soliton{\mu,\betahigh}$ from below, the previous arguments still work,
    replacing $-\betahigh$ and $-\chigh$ with $\betahigh$ and $\chigh$ respectively.
    Therefore, we conclude that the estimate \eqref{eq:4_limsup_estimate} is valid whenever $z\in\pOmega$.

    Though, the estimate \eqref{eq:4_liminf_estimate} can be deduced by the similar argument, we shall show it for completeness.
    Suppose that $\innerproductSingle{\grad{\varphi(z)}}{\normalConv_\Omega(z)} + \beta(z)|\grad{\varphi(z)}| < 0$.
    Then, we see that
    \begin{equation}
        \Phi(z) < -\beta(z) \leq -\betalow.
    \end{equation}
    As discussed before, it holds that either the graph of $\varphi$ is bounded by $\soliton{\mu,\betalow}$ from above or that
    it is bounded by $\soliton{\mu,-\betalow}$ from below. We only deal with the former case.
    For $z_h^+$, we deduce from Proposition \ref{prop:4_existence_super_sub_solution} and {the Taylor} expansion that
    \begin{align}
        \weaksol{h}{\sublevel{\varphi}{\lambda^+_h}} &\leq \geodis{\sublevel{\varphi}{\lambda^+_h}}(z_h^+) + h\kappa_{\sublevel{\varphi}{\lambda_h^+}} + \varepsilon h\nonumber\\
        & \leq \geodis{\sublevel{\varphi}{\lambda_h^+}}(z) + \innerproductSingle{\containerTangent(z)}{\grad{\geodis{\sublevel{\varphi}{\lambda^+_h}}(\widetilde{z_h^+})}}\sqrt{2h} + h(K+\varepsilon),\label{eq:4_approx_estimate_9}
    \end{align}
    where $\widetilde{z_h^+} = z + \containerTangent(z)\widetilde{h}$ for some $\widetilde{h}\in(0,\sqrt{2h})$.
    {Note} that the coefficient of $\sqrt{2h}$ in \eqref{eq:4_approx_estimate_9} is always negative for sufficiently small $h>0$.
    Thus, we see that $z_h^+ \in \chambolleSet{h}(\sublevel{\varphi}{\lambda_h^+})$. In other words, we obtain
    \begin{equation*}
        \chambolleFunction{h}\varphi(z_h^+) \geq \lambda_h^+ = \varphi(z_h^+) + \{-F(\grad{\varphi(z), \hess{\varphi(z)}}) - \varepsilon\} h.
    \end{equation*}
    We deduce from geometry that
    \begin{equation}\label{eq:4_approx_estimate_10}
        \soliton{\mu,\betalow} \leq \varphi\qquad\mbox{in}\qquad\closure{\Omega}.
    \end{equation}
    Applying $\chambolleFunction{h}$ to both sides of \eqref{eq:4_approx_estimate_10} and the monotonicity of $\chambolleFunction{h}$ show
    \begin{equation}\label{eq:4_approx_estimate_11}
        \soliton{\mu-\clow h,\betalow} = \chambolleFunction{h}(\soliton{\mu,\betalow})\leq \chambolleFunction{h}\varphi.
    \end{equation}
    Evaluating \eqref{eq:4_approx_estimate_11} at $z_h^-$, we derive
    \begin{equation}\label{eq:4_approx_estimate_12}
        \mu - \innerproductSingle{\containerTangent(z)}{\grad{\varphi(z)}}\sqrt{2h}\leq \mu - \clow h\innerproductSingle{\containerTangent(z)}{\grad{\varphi(z)}}
        = \soliton{\mu - \clow h,\betalow}(z_h^-) \leq \chambolleFunction{h}\varphi(z_h^-)
    \end{equation}
    for sufficiently small $h>0$. Here, we have used $\innerproductSingle{\containerTangent(z)}{\grad{\varphi(z)}}>0$ by geometry.
    Hence, we again apply {the Taylor} expansion to the left-hand side of \eqref{eq:4_approx_estimate_12} to obtain
    \begin{equation*}
        \chambolleFunction{h}\varphi(z_h^-)\geq \varphi(z_h^-) - \innerproductSingle{\hess{\varphi(\widetilde{z_h^-})}\containerTangent(z)}{\containerTangent(z)}h,
    \end{equation*}
    where $\widetilde{z_h^-} = z - \containerTangent(z)\widetilde{h}$ for some $\widetilde{h}\in(0,\sqrt{2h})$.
    In the same argument in the previous case, we deduce that
    \begin{equation*}
        \chambolleFunction{h}\varphi(z_h^-)\geq \varphi(z_h^-) + \{-F(\grad{\varphi(z),\hess{\varphi(z)}}) - \varepsilon\}h
    \end{equation*}
    for sufficiently small $h>0$.
    \newline\newline
    \noindent
    [\textbf{Case $\grad{\varphi(z)} = \zerovec$ and $\hess{\varphi}(z) = O$}]\newline
    In this case, we note that $F^*(\zerovec,O) = F_*(\zerovec,O) = 0$ (see e.g. \cite[Lemma 1.6.16]{G}).
    Thus, our aim is to prove that
    \begin{equation}\label{eq:4_approx_estimate_16}
        \relaxinf{h}{0}\frac{\chambolleFunction{h}\varphi(z) - \varphi(z)}{h} = \relaxsup{h}{0}\frac{\chambolleFunction{h}\varphi(z) - \varphi(z)}{h} = 0. 
    \end{equation}
    Fix any $\varepsilon > 0$ and take $h > 0$ so small that $h^2 < \varepsilon h$.
    We may assume that $\varphi$ equals a constant $\mu\in\mathbb{R}$ in $B(z,\varepsilon h)$
    by taking $h>0$ much smaller if necessary.
    For each $\bv v\in\mathbb{R}^d$ with $|\bv v| = 1$, we define
    \begin{equation*}
        z_h^\pm := z \pm h^2\bv v.
    \end{equation*}
    By Lemma \ref{lem:4_existence_ts}, we can choose a translative soliton which moves to the direction of $\bv v$.
    Then, we easily observe (see \Figure{fig:4_soliton_image_2}) that
    \begin{equation}\label{eq:4_approx_estimate_17}
        \soliton{\mu-\varepsilon h - \chigh h,\betahigh} \leq \varphi \leq \soliton{\mu+\varepsilon h + \clow h, \betalow}\qquad\mbox{in}\qquad B(z,\varepsilon h).
    \end{equation}

  \begin{figure}[H]
      \centering
      \includegraphics[keepaspectratio, scale=0.3]{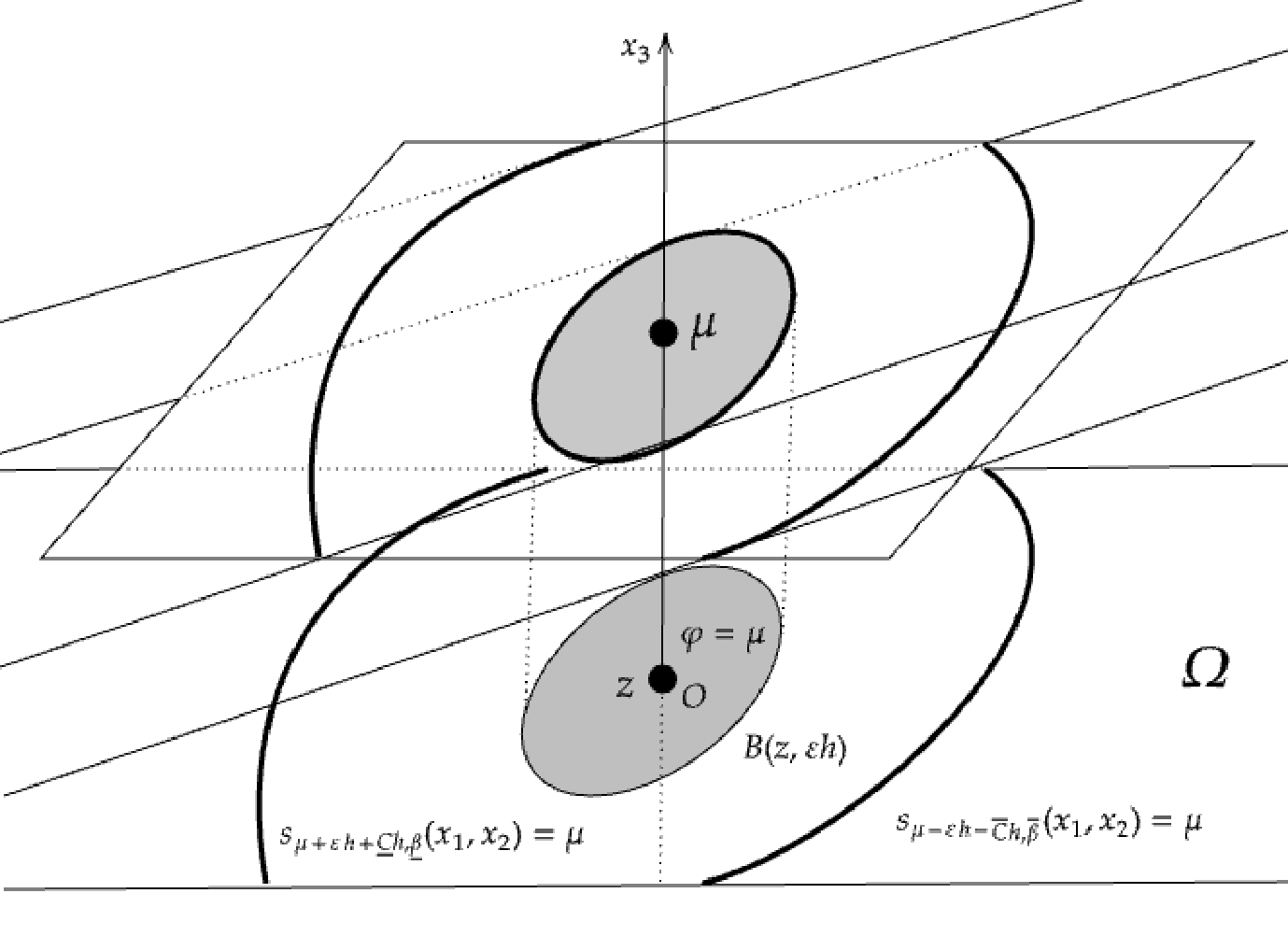}
      \caption{Level sets of $\soliton{\mu-\varepsilon h - \chigh h,\betahigh}$, $\soliton{\mu+\varepsilon h + \clow h, \betalow}$ and $\varphi$.}\label{fig:4_soliton_image_2}
  \end{figure}

    The estimate \eqref{eq:4_approx_estimate_17} holds in $\closure{\Omega}$ under modification of $\varphi$ outside $B(z,\varepsilon h)$.
    Hence, applying $\chambolleFunction{h}$ to \eqref{eq:4_approx_estimate_17} yields
    \begin{align}
        &\soliton{\mu-\varepsilon h,\betahigh}= \chambolleFunction{h,\betahigh}(\soliton{\mu-\varepsilon h - \chigh h,\betahigh})\leq \chambolleFunction{h}(\soliton{\mu-\varepsilon h - \chigh h,\betahigh}) \leq \chambolleFunction{h}\varphi,\label{eq:4_approx_estimate_18_1}\\
        &\chambolleFunction{h}\varphi \leq\chambolleFunction{h}(\soliton{\mu+\varepsilon h+\clow h,\betalow})\leq\chambolleFunction{h,\betalow}(\soliton{\mu + \varepsilon h +\clow h,\betalow}) = \soliton{\mu + \varepsilon h,\betalow}.\label{eq:4_approx_estimate_18_2}
    \end{align}
    We now evaluate \eqref{eq:4_approx_estimate_18_1} and \eqref{eq:4_approx_estimate_18_2} at $z_h^+$ to get
    \begin{align*}
        &\varphi(z_h^+) - \varepsilon h = \mu - \varepsilon h\leq \soliton{\mu - \varepsilon h,\betahigh}(z_h^+)\leq \chambolleFunction{h}\varphi(z_h^+),\\
        &\chambolleFunction{h}\varphi(z_h^+) \leq \soliton{\mu+\varepsilon h,\betalow}(z_h^+)\leq \mu + \varepsilon h = \varphi(z_h^+) + \varepsilon h.
    \end{align*}
    Here, we note that $\partial_{x_d}\soliton{\mu,k} = -1$ for every $\mu\in\mathbb{R}$ and $k\in (-1,1)$ and that $z_h^+\in B(z,\varepsilon h)$.
    The same argument works even if $z_h^+$ is replaced with $z_h^-$. Since the vector $\bv v$ can be chosen arbitrarily,
    we obtain the equality \eqref{eq:4_approx_estimate_16}.
\end{proof}
\begin{rem}
    We note that the convexity of $\Omega$ is used only to derive the equi-continuity of $\weaksol{h}{E}$ with respect to $h>0$;
    it yields the continuity of $\chambolleSet{h}$ and hence \Lemma{lem:4_Sh_Th} follows.
    Therefore, we might not need the convexity of $\Omega$ to establish the consistency of $\chambolleFunction{h}$.
\end{rem}
\section{Conclusion}\label{sec:4_Conclusion}
In this paper, we have confirmed that the capillary Chambolle type scheme,
which was proposed in \cite{EtoGiga2023}, is convergent under several assumptions on
the domain $\Omega$ and the contact angle function $\beta$.
To this end, it is crucial to derive a generator of the function operator $\chambolleFunction{h}$
due to Barles and Souganidis \cite{BarlesSouganidis1991}.
For this, we have established the equi-continuity of the minimizers $\weaksol{h}{E}$
which leads to an important relation between $\chambolleFunction{h}$ and $\chambolleSet{h}$ and
have shown that the translative soliton can be mapped to another translative soliton by $\chambolleFunction{h}$ {and $\chambolleSet{h}$}.
In the course of acquisition of the generator, we frequently use the comparison principle
of viscosity solutions and compare the hypersurface $\curve{}{t}$ induced from a test function $\varphi$ 
with translative solitons which bound $\curve{}{t}$ from above and from below, respectively.
Finally, let us give a concluding remark. When we show that the scheme is convergent, we have assumed that
$\|\beta\|_\infty$ is less than $1$. In other words, the hypersurface $\curve{}{t}$ must not be tangent to $\pOmega$.
However, we expect that $\|\beta\|_\infty$ might be allowed to equal $1$ by approximation of equations {for $\beta$ } with $\|\beta\|_\infty < 1$.
\section{Acknowledgements}
{
The work of the second author was partly supported by the Japan Society 
for the Promotion of Science (JSPS) through the grants Kakenhi: 
No.~20K20342, No.~19H00639, No.~18H05323, and by Arithmer Inc., Daikin Industries, Ltd.\ 
and Ebara Corporation through collaborative grants.
}

\bibliographystyle{siam}
\bibliography{reference}
\end{document}